\title{Tube algebra of group-type subfactors}
\author[ \large
D\lowercase{ietmar} B\lowercase{isch},
P\lowercase{aramita} D\lowercase{as},
S\lowercase{hamindra} G\lowercase{hosh},
V\lowercase{ed} G\lowercase{upta}, 
N\lowercase{arayan} R\lowercase{akshit},
]
{\bf \large 
	D\lowercase{ietmar} B\lowercase{isch},
	P\lowercase{aramita} D\lowercase{as},
	S\lowercase{hamindra} K\lowercase{umar} G\lowercase{hosh and}
	N\lowercase{arayan} R\lowercase{akshit}}
\thanks{}
\date{}
\address{Department of Mathematics, Vanderbilt University, Nashville, USA}
\email{dietmar.bisch@vanderbilt.edu}
\address{Stat-Math Unit, Indian Statistical Institute, Kolkata, INDIA}
\email{paramita.das@isical.ac.in, shami@isical.ac.in, narayan753@gmail.com}
\numberwithin{equation}{section}
\numberwithin{figure}{section}
\theoremstyle{plain}
\newtheorem{thm}{Theorem}[section]
\theoremstyle{plain}
\newtheorem{lem}[thm]{Lemma}
\theoremstyle{remark}
\newtheorem{rem}[thm]{Remark}
\theoremstyle{plain}
\newtheorem{cor}[thm]{Corollary}
\theoremstyle{definition}
\theoremstyle{plain}
\newtheorem{prop}[thm]{Proposition}
\theoremstyle{plain}
\newcommand{\comments}[1]{}
\newcommand{\ra}{\rightarrow}
\newcommand{\rab}{\rangle}
\newcommand{\lra}{\longrightarrow}
\newcommand{\lab}{\langle}
\newcommand{\mcal}{\mathcal}
\newcommand{\N}{\mathbb N}
\newcommand{\C}{\mathbb{C}}
\newcommand{\mscr}{\mathscr}
\newcommand{\vlon}{\varepsilon}
\newcommand{\vphi}{\varphi}
\keywords{Planar algebras, subfactors, group-type subfactors, fusion algebras, affine representations}
\begin{document}
\global\long\def\vlon{\varepsilon}
\global\long\def\bt{\bowtie}
\global\long\def\ul#1{\underline{#1}}
\global\long\def\ol#1{\overline{#1}}
\global\long\def\abs#1{\left|{#1}\right	|}
\global\long\def\norm#1{\left\|{#1}\right\|}
\global\long\def\os#1#2{\overset{#1}{#2}}
\global\long\def\us#1#2{\underset{#1}{#2}}
\global\long\def\ous#1#2#3{\overset{#1}{\underset{#3}{#2}}}
\global\long\def\t#1{\text{#1}}
\global\long\def\lrsuf#1#2#3{\vphantom{#2}_{#1}^{\vphantom{#3}}#2^{#3}}
\global\long\def\tr{\triangleright}
\global\long\def\tl{\triangleleft}
\global\long\def\cc90#1{\begin{sideways}#1\end{sideways}}
\global\long\def\turnne#1{\begin{turn}{45}{#1}\end{turn}}
\global\long\def\turnnw#1{\begin{turn}{135}{#1}\end{turn}}
\global\long\def\turnse#1{\begin{turn}{-45}{#1}\end{turn}}
\global\long\def\turnsw#1{\begin{turn}{-135}{#1}\end{turn}}
\global\long\def\fusion#1#2#3{#1 \os{\textstyle{#2}}{\otimes} #3}

\global\long\def\red#1{\textcolor{red}{#1}}

\global\long\def\1disc#1#2#3#4{\;\;
	{\psfrag{1}{\reflectbox{#1}}
	\psfrag{2}{#2}
	\psfrag{3}{#3}
	\psfrag{4}{#4}
	\includegraphics[scale=0.2]{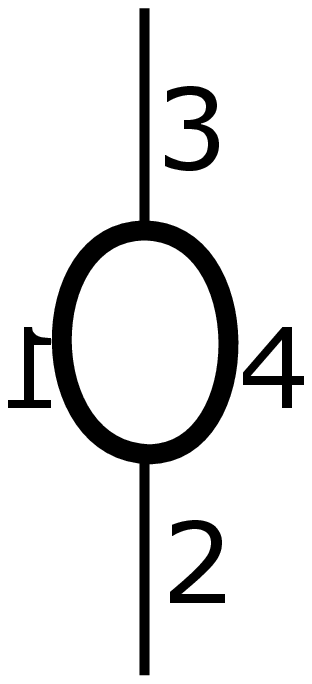}}
	\;\;}

\global\long\def\tower#1#2#3#4{\;\;
	{\psfrag{1}{\reflectbox{#1}}
		\psfrag{2}{#2}
		\psfrag{3}{#3}
		\psfrag{4}{#4}
		\includegraphics[scale=0.2]{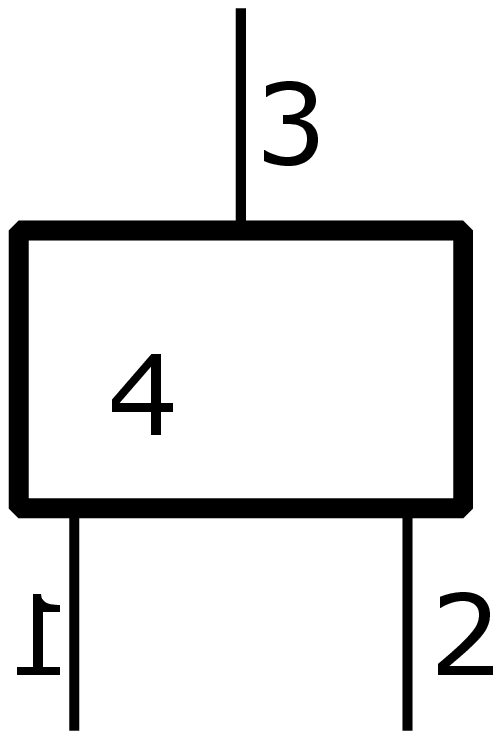}}
	\;\;}

\global\long\def\wine#1#2#3#4{\;\;
	{\psfrag{1}{\reflectbox{#1}}
		\psfrag{2}{#2}
		\psfrag{3}{#3}
		\psfrag{4}{#4}
		\includegraphics[scale=0.2]{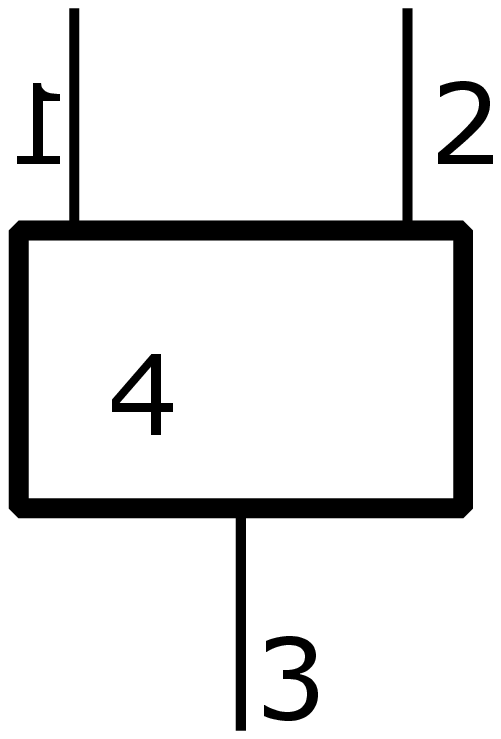}}
	\;\;}

\global\long\def\hexagon#1#2#3#4#5#6#7#8#9{
\psfrag{1}{\reflectbox{#1}}
\psfrag{2}{#2}
\psfrag{3}{#3}
\psfrag{4}{\reflectbox{#4}}
\psfrag{5}{#5}
\psfrag{6}{#6}
\psfrag{7}{\reflectbox{#7}}
\psfrag{8}{#8}
#9
\includegraphics[scale=0.2]{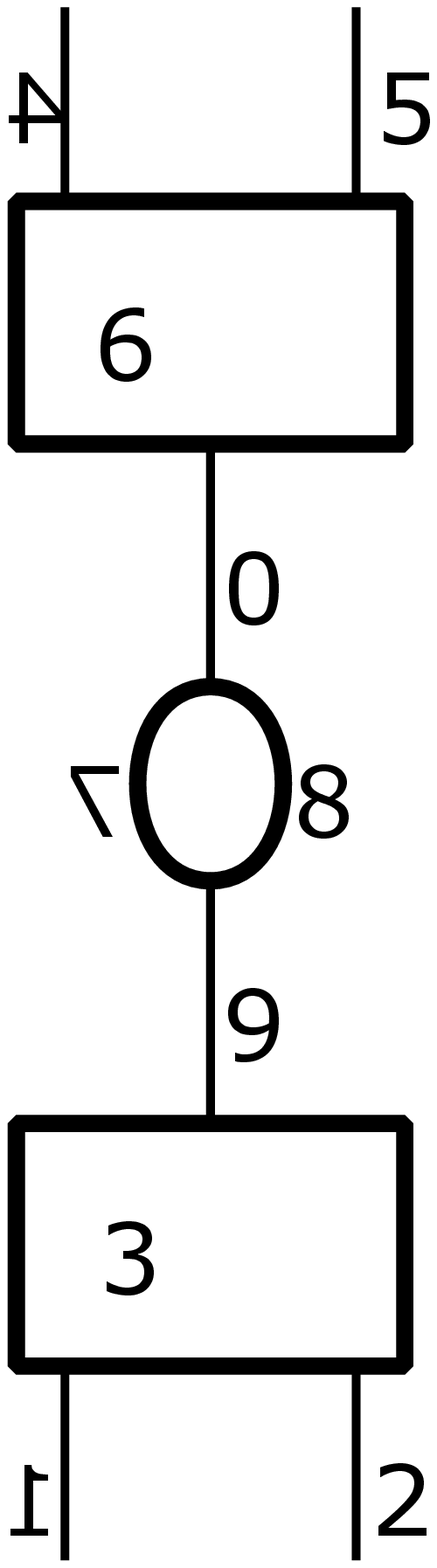}
}

\global\long\def\smbox#1#2#3#4#5#6#7{#1 \raisebox{#7 em}{$\ous {\displaystyle #3} { \scalebox{#5}[#6]{$\square$} } {\displaystyle #2} $}  #4}

\global\long\def\tridown#1#2#3#4#5#6{\raisebox{#6 em}{$\os {\displaystyle #3} {#1 \negthickspace \negthickspace   {\scalebox{#4}[#5]{$ \triangledown $ }} \negthickspace \negthickspace \negthickspace \negthickspace \negthickspace #2}$}}

\global\long\def\triup#1#2#3#4#5#6{{}^{\displaystyle#1} \negthickspace \negthickspace \negthickspace  { \raisebox{#6 em}{ $ \us {\displaystyle #3} { \scalebox{#4}[-#5]{$ \triangledown $ } } $ } }  \negthickspace \negthickspace \negthickspace \negthickspace \negthickspace \negthickspace {}^{\displaystyle#2} }

\maketitle

\begin{abstract}
We describe the tube algebra and its representations in the cases of diagonal and Bisch-Haagerup subfactors possibly with a scalar $ 3 $-cocycle obstruction. We show that these categories are additively equivalent to the direct product over conjugacy classes of representation category of a centralizer subgroup (corresponding to the conjugacy class) twisted by a scalar $ 2 $-cocycle obtained from the $ 3 $-cocycle obstruction.
\end{abstract}
\section{Introduction}
Annular representations of planar algebras were introduced by Vaughan Jones in \cite{Jon3} to construct subfactors with principal graphs $E_6$ and $E_8$. In the same paper,
he explicitly worked out the Temperley-Lieb example.
These calculations helped in construction of new examples such as \cite{Pet}.
Recently, annular representations of subfactors and semisimple rigid $ C^* $-tensor categories have become a very interesting area of research.
The annular representation category turns out to be a nice braided tensor category - not necessarily semisimple - which is equivalent to the center of the original bimodule / $ C^* $-tensor category in the case of finite depth / fusion categories (see \cite{DGG1}, \cite{DGG2}, \cite{GJ}).
For general depth, this category becomes equivalent to the center of a certain induced category (which is basically an extension where infinite direct sums are allowed) - see \cite{NY}, \cite{PV}.
There is also an analytic aspect of the annular representation category.
Analytic properties, such as, amenabilty, Haagerup property, property (T) of the subfactor / $ C^* $-tensor category can be reinterpreted in terms of annular representations.

In this paper, we deal with two group-type subfactors - the so-called diagonal and the Bisch-Haagerup ones - possibly with $ 3 $-cocyle obstruction.
The approximation properties of these two examples are well-known and depend on the associated group (see \cite{frenchpop}, \cite{Pop94} for diagonal and \cite{BH} for Bisch-Haagerup).
We determine the annular representation category.
For this, we borrow techniques from \cite{GJ}, namely, we fix a `full' weight set in the object space and find the annular category over the weight set.
It was shown in \cite{GJ} that the annular representation categories  over any two full weight sets are equivalent.
Moreover, annular representations (in the sense of Vaughan Jones) of a subfactor $ N\subset M $  are the same as the annular representations of the $ N$-$N $-bimodule category $ \mcal C_{NN} $ generated by $ {}_N L^2 (M) {}_N $.

For the diagonal subfactor, $ \mcal C_{NN} $ is a pointed category, that is, the category of group graded vector spaces with a possibly nontrivial associator; the group is the one generated by the automorphisms used to build the diagonal subfactor and the associator is given by the $ 3 $-cocycle obstruction.
When the cocycle is trivial, the annular representations were discussed in \cite{GJ}.
We consider the annular algebra over the irreducible bimodules, that is, Ocneanu's tube algebra.
We show that the tube algebra is a direct sum over conjugacy classes of $ * $-algebras consisting of a matrix algebra tensored with the group algebra of the centralizer subgroup twisted by a $ 2 $-cocycle.
We give the explicit dependence of the $ 2 $-cocycle on the $ 3 $-cocycle obstruction. As a result, the annular representation decomposes as (possibly infinite) direct sum of projective representations of the centralizer subgroups corresponding to the conjugacy classes.

In the Bisch-Haagerup case, we consider the intermediate subfactor $ N \subset Q \subset M $ where $ N= Q^H $ and $ M = Q \rtimes K $ with $H$, $K$ being finite groups acting outerly on the $II_1$ factor $Q$.
The category of $ Q$-$Q $-bimodules $ \mcal C_{QQ} $ generated by $ {}_Q L^2(Q) \us N \otimes L^2(Q) {}_Q  $ and $ {}_Q L^2(M) {}_Q  $, is again a pointed category equivalent to the category of $ G $-graded vector spaces where $ G $ is the group generated by $ H $ and $ K $ in $ \t{Out} (Q) $ with the associator given by the $ 3 $-cocycle obstruction.
This category has special algebra objects, namely, $ A = \us {h \in H} \bigoplus {}_Q L^2(Q_h) {}_Q $ and $ B = \us {k \in K} \bigoplus {}_Q L^2(Q_k) {}_Q $.
Now, the $ 2 $-category of bimodules over $ N $ and $ M $ can be made equivalent to the $ 2 $-category of bimodules in $ \mcal C_{QQ} $ over $ A $ and $ B $.
This was a method suggested to us by Scott Morrison.
However, we obtain the annular representations straight from the actual bifinite $ N$-$N $-bimodules using techniques in \cite{GJ}.
$ \mcal C_{NN} $ unfortunately is not pointed anymore.
The set of isomorphism classes of irreducibles turns out to be complicated.
So, we consider a different weight set, namely $ {}_N L^2(Q_g) {}_N  $ for $ g\in G $ and obtain the annular algebra over it.
As a $ * $-algebra, it turns out to be same as before, namely direct sum over conjugacy classes of matrix algebras tensored with the group algebra of the centralizer subgroup twisted by a $ 2 $-cocycle. Thereby, the representations are also graded by the conjugacy classes and in each grade the representations are the same as that of the corresponding centralizer subgroup twisted by the $ 2 $-cocyle.

\noindent\textbf{Acknowledgement.} The authors would like to thank Scott Morrison and Corey Jones for several useful discussions. A part of this work was completed during the trimester program on von Neumann algebras at the Hausdorff Institute of Mathematics and the authors would like to thank  HIM for the opportunity. The first named author was supported by US NSF grants DMS-0653717 and DMS-1001560, and the Simons Foundation Collaboration Grant no. 359625. 

\section{Some basics on group cocycles}

Let $G$ be a group with identity element $e$ and $\omega \in Z^3(G, S^1)$ be a 3-cocycle of $G$, that is, $\omega$ satisfies the following:
\begin{equation}\label{3coc}
\omega(g_1,g_2,g_3) \omega(g_1,g_2g_3,g_4)\omega(g_2,g_3,g_4) = \omega(g_1g_2,g_3,g_4)\omega(g_1,g_2,g_3g_4) \text{ for all } g_1,g_2,g_3,g_4 \in G
\end{equation}
We will use Equation \ref{3coc} at various instances in the article by denoting the particular elements of $G$ which will correspond to $g_1,g_2,g_3,g_4$ simply by $1,2,3,4$ respectively.
Up to $3$-coboundary equivalence, we may consider  $\omega$ to be a normalized cocycle, i.e.,  $\omega(g_1, g_2, g_3) = 1$ whenever either $g_1, g_2$ or $g_3$ is $e$; namely, if $\varphi(g_1,g_2) = \omega(g_1, e, e) \overline{\omega}(e, e, g_2)$ for all $g_1, g_1 \in G$, then $(\partial \varphi) \omega$ is normalized.

For $a \in G$, let $G_a$  denote the centralizer subgroup of $a$.
The following result may be well-known to specialists but we include the statement for the sake of completeness.  
\begin{lem}\label{2coc}
	$G_a \times G_a \ni (g,h) \stackrel{\varphi_a}{\longmapsto} \overline{\omega}(a,g,h)\omega(g,a,h)\overline{\omega}(g,h,a) $ is a $2$-cocycle of $G_a$.
\end{lem}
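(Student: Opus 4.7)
The plan is to verify the 2-cocycle identity
\[
\varphi_a(g,h)\,\varphi_a(gh,k) \;=\; \varphi_a(g,hk)\,\varphi_a(h,k) \qquad (g,h,k\in G_a)
\]
by applying the 3-cocycle relation \eqref{3coc} to the four quadruples obtained from $(g,h,k)$ by inserting $a$ at each of the four possible slots, namely $(a,g,h,k)$, $(g,a,h,k)$, $(g,h,a,k)$, $(g,h,k,a)$, and then combining the resulting four equations multiplicatively in an alternating fashion.

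More precisely, I will write down the four identities
\begin{align*}
\omega(a,g,h)\,\omega(a,gh,k)\,\omega(g,h,k) &= \omega(ga,h,k)\,\omega(a,g,hk),\\
\omega(g,a,h)\,\omega(g,ah,k)\,\omega(a,h,k) &= \omega(ga,h,k)\,\omega(g,a,hk),\\
\omega(g,h,a)\,\omega(g,ha,k)\,\omega(h,a,k) &= \omega(gh,a,k)\,\omega(g,h,ak),\\
\omega(g,h,k)\,\omega(g,hk,a)\,\omega(h,k,a) &= \omega(gh,k,a)\,\omega(g,h,ka),
\end{align*}
and then form the alternating product (first)$\cdot$(third)$/$((second)$\cdot$(fourth)). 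Since $g,h,k\in G_a$, we have $ah=ha$ and $ak=ka$, so the factors $\omega(g,ah,k)=\omega(g,ha,k)$ on the left and $\omega(g,h,ak)=\omega(g,h,ka)$ on the right will cancel; similarly $\omega(ga,h,k)$ cancels between the first and second identities, and $\omega(g,h,k)$ cancels between the first and fourth. After this cancellation, the remaining factors on the left-hand side regroup precisely into $\varphi_a(g,h)^{-1}\varphi_a(gh,k)^{-1}\varphi_a(g,hk)\varphi_a(h,k)$ (up to conjugation), giving the desired identity.

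The only real obstacle is purely bookkeeping: there are twelve $\omega$-factors on each side of the target identity, and one must carefully track which of the four applications of \eqref{3coc} contributes each factor, and verify that the use of $G_a$-commutativity aligns the arguments (in particular $ah=ha$, $ak=ka$, and consequently $a\cdot hk = hk\cdot a$) so that the right-hand sides of the four relations combine into exactly the terms involving $\omega(ga,h,k)$, $\omega(gh,a,k)$, $\omega(a,g,hk)$, $\omega(g,a,hk)$, $\omega(gh,k,a)$, $\omega(g,h,ka)$ that appear (or cancel) in $\varphi_a$. Normalization of $\omega$ is not needed in the argument, but it ensures $\varphi_a(e,\cdot)=\varphi_a(\cdot,e)=1$, so $\varphi_a$ is in fact a normalized 2-cocycle.
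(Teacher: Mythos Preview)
Your proposal is correct and takes essentially the same approach as the paper: both arguments apply the $3$-cocycle relation \eqref{3coc} to the four quadruples obtained by inserting $a$ into $(g,h,k)$ at each of the four positions, and then use $ag=ga$, $ah=ha$, $ak=ka$ to cancel. The only difference is organizational --- the paper first writes out the twelve $\omega$-factors of $\varphi_a(h,k)\,\overline\varphi_a(gh,k)\,\varphi_a(g,hk)\,\overline\varphi_a(g,h)$ and groups them by the position of $a$ before invoking \eqref{3coc}, whereas you start from the four instances of \eqref{3coc} and form their alternating product; the cancellations and the use of commutativity are identical.
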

\begin{proof}
Note that $\varphi_a (h_2,h_3)\ol{ \varphi}_a (h_1h_2,h_3) \varphi_a (h_1,h_2h_3)\ol{ \varphi}_a (h_1,h_2)$ contain twelve terms involving $\omega$.

The product of the four $\omega$-terms with $a$ in the first place is
\[
{\omega} (\us{1}{a},\us{2}{h_1},\us{3}{h_2}) {\omega} (\us{1}{a},\us{2~3}{h_1h_2},\us{4}{h_3}) \ol{ \omega} (\us{1}{a},\us{2}{h_1},\us{3~4}{h_2h_3}) \ol{\omega} (a,h_2,h_3) = {\omega} (ah_1,h_2,h_3) \ol{\omega} (h_1,h_2,h_3) \ol{\omega} (a,h_2,h_3),
\]
the product of the four $\omega$-terms with $a$ in the third place is
\[
\ol{\omega} (\us{2}{h_2},\us{3}{h_3},\us{4}{a}) \omega(\us{1~2}{h_1h_2}, \us{3}{h_3}, \us{4}{a}) \ol{\omega}(\us{1}{h_1},\us{2~3}{h_2h_3},\us{4}{a}) \omega(h_1,h_2,a) = \ol{\omega}(h_1,h_2,h_3a) \omega(h_1,h_2,h_3)  \omega(h_1,h_2,a),
\]
and the remaining product is
\begin{align*}
& \left[\omega(\us{2}{h_2},\us{3}{a},\us{4}{h_3}) \ol{\omega}(\us{1~2}{h_1h_2},\us{3}{a},\us{4}{h_3})\right] \left[\omega(\us{1}{h_1},\us{2}{a},\us{3~4}{h_2h_3}) \ol{\omega}(\us{1}{h_1},\us{2}{a},\us{3}{h_2})\right]\\
= ~ & \ol{\omega}(h_1,h_2,a)\ol{\omega}(h_1,h_2a,h_3)\omega(h_1,h_2,ah_3) \omega(h_1,ah_2,h_3) \omega(a,h_2,h_3) \ol{\omega}(h_1a,h_2,h_3).
\end{align*}
Now, since $h_1,h_2,h_3$ commute with $a$, all terms in the grand product cancel amongst each other.
\end{proof}
Instead of $a$, if we take $xax^{-1}$ for any $x \in G$, then it is natural to ask whether ${\varphi}_{xax^{-1}} \circ ({\mbox{Ad}}_x \times {\mbox{Ad}}_x) : G_a \times G_a \ra S^1$ is coboundarily equivalent to $\varphi_a$. The
answer is yes; however, we will prove not just this, but a slightly general formula which will be useful later.
\begin{prop}\label{go}
	For all $a,x,y\in G$, there exists ${\gamma}_{a,x,y}: G_a \ra S^1$ such that
	 \begin{align*}
	 & \ol{\omega}(xax^{-1}, xgy^{-1}, yhz^{-1}) \omega(xgy^{-1}, yay^{-1}, yhz^{-1}) \ol{\omega}(xgy^{-1}, yhz^{-1}, zaz^{-1})\\
	  =~ & \left[\gamma_{a,x,y}(g) \gamma_{a,y,z}(h) \ol{\gamma}_{a,x,z}(gh)\right] \varphi_a(g,h)
	 \end{align*}
for all $g,h \in G_a$.
Thus, $\gamma_{a,x,x}$ is a scalar $1$-cochain of $G_a$ which implements the coboundary equivalence between $\varphi_{xax^{-1}}\circ (\t{Ad}_x \times \t{Ad}_x)$ and $\varphi_a$.
\end{prop}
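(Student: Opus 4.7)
The plan is to reduce the identity, as in Lemma \ref{2coc}, to repeated application of the 3-cocycle relation (\ref{3coc}), the main task being to identify an explicit 1-cochain $\gamma_{a,x,y}$ with the right structure. A natural strategy is to ``pull $x$ out on the left, $y^{-1}$ or $z^{-1}$ out on the right'' of each of the three $\omega$-factors on the left-hand side, so that the $x, y, z$ dependence and the $g, h$ dependence separate.

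First, I would pin down the form of $\gamma_{a,x,y}(g)$ by computing the left-hand side in reduced cases, e.g.\ $z = e$ or $x = y$, where the expected answer is dictated by consistency: the case $x = y = z = e$ forces $\gamma_{a,e,e} \equiv 1$ (so the left-hand side is already $\varphi_a(g,h)$), while the case $x = y = z$ must give the coboundary equivalence claimed at the end. Applying (\ref{3coc}) to 4-tuples that introduce $x$ or $z^{-1}$ into the arguments, for instance $(xax^{-1}, x, gy^{-1}, yhz^{-1})$ for the first $\omega$ on the left and $(xgy^{-1}, yhz^{-1}, z, az^{-1})$ for the third, isolates factors depending only on $(a, x, y, g)$, $(a, y, z, h)$, or $(a, x, z, gh)$; these factors determine $\gamma_{a,x,y}(g)$ explicitly as a product of three or four normalized $\omega$-values.

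Second, with $\gamma_{a,x,y}$ in hand, I would verify the full identity by applying (\ref{3coc}) a controlled number of times to each of the three left-hand factors, so as to strip $x$ off the leftmost, $z^{-1}$ off the rightmost, and split $y$ (and $y^{-1}$) out of the two middle slots on either side of the central $yay^{-1}$. The resulting dozen or so $\omega$-terms should then, using that $g, h$ commute with $a$, rearrange and cancel exactly in the pattern carried out at the end of the proof of Lemma \ref{2coc}, leaving $\varphi_a(g,h)$ multiplied by the desired product $\gamma_{a,x,y}(g)\,\gamma_{a,y,z}(h)\,\overline{\gamma}_{a,x,z}(gh)$.

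The main obstacle is the bookkeeping: since each pass of (\ref{3coc}) splits one $\omega$ into several, one has to simultaneously track on the order of a dozen $\omega$-values and correctly recognize three of them as the boundary expression $\gamma_{a,x,y}(g)\,\gamma_{a,y,z}(h)\,\overline{\gamma}_{a,x,z}(gh)$ while the remaining factors collapse to $\varphi_a(g,h)$. Once this computation is complete, the final assertion of the proposition is immediate by specializing $y = x = z$: the left-hand side becomes $\overline{\omega}(xax^{-1}, xgx^{-1}, xhx^{-1})\,\omega(xgx^{-1}, xax^{-1}, xhx^{-1})\,\overline{\omega}(xgx^{-1}, xhx^{-1}, xax^{-1}) = \varphi_{xax^{-1}}(xgx^{-1}, xhx^{-1}) = \bigl(\varphi_{xax^{-1}} \circ (\text{Ad}_x \times \text{Ad}_x)\bigr)(g, h)$, while the right-hand side becomes $(\partial \gamma_{a,x,x})(g,h)\,\varphi_a(g,h)$, which is exactly the coboundary equivalence claimed.
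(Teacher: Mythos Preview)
Your approach is essentially the same as the paper's: repeatedly apply the 3-cocycle relation to each of the three $\omega$-factors on the left so as to peel off the $x$, $y$, $z$ conjugations, then use $g,h\in G_a$ to cancel pairs of terms and read off $\gamma_{a,x,y}$ from what remains alongside $\varphi_a(g,h)$. The specialization argument at the end for the coboundary equivalence is also exactly what is needed.

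However, you significantly underestimate the scale of the bookkeeping, and this matters because the proof \emph{is} the bookkeeping. In the paper's execution each of the three left-hand $\omega$-factors is expanded through four successive applications of (\ref{3coc}), yielding thirteen $\omega$-terms apiece, so thirty-nine in all rather than ``a dozen or so''; eight pairs cancel using $ga=ag$, $ha=ah$, three terms assemble into $\varphi_a(g,h)$, and the remaining twenty terms still do not separate cleanly---four of them require a further round of cocycle expansion before one can group everything into the shape $\gamma_{a,x,y}(g)\,\gamma_{a,y,z}(h)\,\overline{\gamma}_{a,x,z}(gh)$. The resulting $\gamma_{a,x,y}(g)$ is a product of \emph{eight} $\omega$-values, not three or four. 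Your heuristic for guessing $\gamma$ from reduced cases ($z=e$, $x=y$, etc.) is reasonable as a discovery device, but it will not by itself produce all eight factors, and an incorrect ansatz for $\gamma$ will leave stray terms that do not cancel. To turn the sketch into a proof you need to actually carry out the full expansion and exhibit the explicit $\gamma_{a,x,y}$.
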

\begin{proof}
We write out the three terms in the L.H.S. of the equation in the statement one by one and expand them using Equation \ref{3coc}. 
In the successive steps, we just repeat the process, each time expanding the last term coming from the previous step. In the final step, some of the terms are decorated with numbers and strike-throughs, or underlines and alphabets, the explanation for which is given below. These are just elementary cocycle calculations which has been exhibited, down to the last detail. 
The first term is:\\
$\t{ }\t{ }\t{ }\ol{\omega}(xax^{-1}, xgy^{-1}, yhz^{-1})$\\
$=\omega(x, ax^{-1}, xghz^{-1}) \ol{\omega}(x, agy^{-1}, yhz^{-1}) \ol{\omega}(x, ax^{-1}, xgy^{-1}) \ol{\omega}(ax^{-1}, xgy^{-1}, yhz^{-1})$\\
$=\omega(x, ax^{-1}, xghz^{-1}) \ol{\omega}(x, agy^{-1}, yhz^{-1}) \ol{\omega}(x, ax^{-1}, xgy^{-1}) \omega(a, x^{-1}, xghz^{-1}) \ol{\omega}(a, x^{-1}, xgy^{-1})$\\
$\t{ }\t{ }\t{ }\ol{\omega}(x^{-1}, xgy^{-1}, yhz^{-1}) \ol{\omega}(a, gy^{-1}, yhz^{-1})$\\
$=\omega(x, ax^{-1}, xghz^{-1}) \ol{\omega}(x, agy^{-1}, yhz^{-1}) \ol{\omega}(x, ax^{-1}, xgy^{-1}) \omega(a, x^{-1}, xghz^{-1}) \ol{\omega}(a, x^{-1}, xgy^{-1})$\\
$\t{ }\t{ }\t{ }\ol{\omega}(x^{-1}, xgy^{-1}, yhz^{-1}) \ol{\omega}(ag, y^{-1}, yhz^{-1}) \omega(a,g,y^{-1}) \omega(g, y^{-1},yhz^{-1}) \ol{\omega}(a,g,hz^{-1})$\\
$=\us{A}{\underbracket{\omega(x, ax^{-1}, xghz^{-1})}} \cancelto{1}{\ol{\omega}(x, agy^{-1}, yhz^{-1})} \us{A}{\underbracket{\ol{\omega}(x, ax^{-1}, xgy^{-1})}} \us{B}{\underbracket{\omega(a, x^{-1}, xghz^{-1})}} \us{B}{\underbracket{\ol{\omega}(a, x^{-1}, xgy^{-1})}}$\\
$\t{ }\t{ }\t{ }\ol{\omega}(x^{-1}, xgy^{-1}, yhz^{-1}) \cancelto{2}{\ol{\omega}(ag, y^{-1}, yhz^{-1})} \us{C}{\underbracket{\omega(a,g,y^{-1})}} \cancelto{7}{\omega(g, y^{-1},yhz^{-1})} \cancelto{3}{\omega(ag, h, z^{-1})}$\\
$\t{ }\t{ }\t{ } \cancelto{6}{\ol{\omega}(g,h,z^{-1})} \us{C}{\underbracket{\ol{\omega}(a, gh,z^{-1})}} \boxed{\ol{\omega}(a,g,h)}$\\
The second term is: \\
$\t{ }\t{ }\t{ }\omega(xgy^{-1}, yay^{-1}, yhz^{-1})$\\
$= \omega(x, gy^{-1}, yay^{-1}) \omega(x, gay^{-1}, yhz^{-1}) \ol{\omega}(x, gy^{-1}, yahz^{-1}) \omega(gy^{-1}, yay^{-1}, yhz^{-1})$\\
$=\omega(x, gy^{-1}, yay^{-1}) \omega(x, gay^{-1}, yhz^{-1}) \ol{\omega}(x, gy^{-1}, yahz^{-1}) \omega(g, y^{-1}, yay^{-1}) \omega(y^{-1}, yay^{-1}, yhz^{-1})$\\
$\t{ }\t{ }\t{ }\ol{\omega}(g, y^{-1}, yahz^{-1}) \omega(g, ay^{-1}, yhz^{-1})$\\
$=\omega(x, gy^{-1}, yay^{-1}) \omega(x, gay^{-1}, yhz^{-1}) \ol{\omega}(x, gy^{-1}, yahz^{-1}) \omega(g, y^{-1}, yay^{-1}) \omega(y^{-1}, yay^{-1}, yhz^{-1})$\\
$\t{ }\t{ }\t{ }\ol{\omega}(g, y^{-1}, yahz^{-1}) \ol{\omega}(g, a, y^{-1}) \ol{\omega}(a, y^{-1}, yhz^{-1}) \omega(ga, y^{-1}, yhz^{-1}) \omega(g, a, hz^{-1})$\\
$=\us{F}{\underbracket{\omega(x, gy^{-1}, yay^{-1})}} \cancelto{1}{\omega(x, gay^{-1}, yhz^{-1})} \cancelto{8}{\ol{\omega}(x, gy^{-1}, yahz^{-1})} \us{E}{\underbracket{\omega(g, y^{-1}, yay^{-1})}} \omega(y^{-1}, yay^{-1}, yhz^{-1})$\\
$\t{ }\t{ }\t{ } \cancelto{5}{\ol{\omega}(g, y^{-1}, yahz^{-1})} \us{D}{\underbracket{\ol{\omega}(g, a, y^{-1})}} \us{B}{\underbracket{\ol{\omega}(a, y^{-1}, yhz^{-1})}} \cancelto{2}{\omega(ga, y^{-1}, yhz^{-1})} \us{C}{\underbracket{\omega(a, h, z^{-1})}} \cancelto{4}{\omega(g, ah, z^{-1})}$\\
$\t{ }\t{ }\t{ } \cancelto{3}{\ol{\omega}(ga, h, z^{-1})} \boxed{\omega(g, a, h)}$\\
The third term is: \\
$\t{ }\t{ }\t{ }\ol{\omega}(xgy^{-1}, yhz^{-1}, zaz^{-1})$\\
$=\omega(x, gy^{-1}, yhaz^{-1}) \ol{\omega}(x, gy^{-1}, yhz^{-1})
\ol{\omega}(x, ghz^{-1}, zaz^{-1}) \ol{\omega}(gy^{-1}, yhz^{-1}, zaz^{-1})$\\
$=\omega(x, gy^{-1}, yhaz^{-1}) \ol{\omega}(x, gy^{-1}, yhz^{-1})
\ol{\omega}(x, ghz^{-1}, zaz^{-1}) \omega(g, y^{-1}, yhaz^{-1})  \ol{\omega}(g, y^{-1}, yhz^{-1})$\\
$\t{ }\t{ }\t{ }\ol{\omega}(y^{-1}, yhz^{-1}, zaz^{-1}) \ol{\omega}(g, hz^{-1}, zaz^{-1})$\\
$=\omega(x, gy^{-1}, yhaz^{-1}) \ol{\omega}(x, gy^{-1}, yhz^{-1})
\ol{\omega}(x, ghz^{-1}, zaz^{-1}) \omega(g, y^{-1}, yhaz^{-1})  \ol{\omega}(g, y^{-1}, yhz^{-1})$\\
$\t{ }\t{ }\t{ }\ol{\omega}(y^{-1}, yhz^{-1}, zaz^{-1}) \omega(g, h, z^{-1}) \omega( h, z^{-1}, zaz^{-1}) 
\ol{\omega}(gh, z^{-1}, zaz^{-1}) \ol{\omega}(g, h, az^{-1})$\\
$=\cancelto{8}{\omega(x, gy^{-1}, yhaz^{-1})} \ol{\omega}(x, gy^{-1}, yhz^{-1})
\us{F}{\underbracket{\ol{\omega}(x, ghz^{-1}, zaz^{-1})}} \cancelto{5}{\omega(g, y^{-1}, yhaz^{-1})}  \cancelto{7}{\ol{\omega}(g, y^{-1}, yhz^{-1})}$\\
$\t{ }\t{ }\t{ }\ol{\omega}(y^{-1}, yhz^{-1}, zaz^{-1}) \cancelto{6}{\omega(g, h, z^{-1})} \us{E}{\underbracket{\omega( h, z^{-1}, zaz^{-1})}} 
\us{E}{\underbracket{\ol{\omega}(gh, z^{-1}, zaz^{-1})}} \us{D}{\underbracket{\omega(gh, a, z^{-1})}}$\\
$\t{ }\t{ }\t{ }\us{D}{\underbracket{\ol{\omega}(h, a, z^{-1})}} \cancelto{4}{\ol{\omega}(g, ha, z^{-1})} \boxed{\ol{\omega}(g, h, a)}$\\
Thus each $\omega$-term has been expressed as a product of $13$
$\omega$-terms. After combining these $39$ terms and noting that \\
(i) $8$ pairs of terms cancel since $g, h \in G_a$ (the cancellations have been marked with numbers for the reader's convenience),\\ 
(ii) the last (boxed) terms on the R.H.S of the three expressions above can be combined to yield $\varphi_a(g, h)$,\\ 
we are left with the following $20$ $\omega$-terms which have been grouped under A,B, C,D, E, F for reasons that will become apparent as we go along (namely, contribution towards defining the function $\gamma$ in the statement of the Proposition.):\\
\noindent
A terms: $= \ol{\omega}(x, ax^{-1}, xgy^{-1}) \omega(x, ax^{-1}, xghz^{-1})$\\
B terms: $\ol{\omega}(a, x^{-1}, xgy^{-1}) \ol{\omega}(a, y^{-1}, yhz^{-1}) \omega(a, x^{-1}, xghz^{-1})$\\
C terms:  $\omega(a,g,y^{-1})  \omega(a, h, z^{-1})  \ol{\omega}(a, gh,z^{-1})$\\
D terms: $\ol{\omega}(g, a, y^{-1}) \ol{\omega}(h, a, z^{-1}) \omega(gh, a, z^{-1}) $\\ 
E terms: $\omega(g, y^{-1}, yay^{-1}) \omega( h, z^{-1}, zaz^{-1}) \ol{\omega}(gh, z^{-1}, zaz^{-1})$\\
F terms: $\omega(x, gy^{-1}, yay^{-1})  \ol{\omega}(x, ghz^{-1}, zaz^{-1})$\\

\noindent The remaining 4 terms are:
\begin{equation}\label{remain}
\ol{\omega}(x^{-1}, xgy^{-1}, yhz^{-1}) \omega(y^{-1}, yay^{-1}, yhz^{-1}) \ol{\omega}(x, gy^{-1}, yhz^{-1}) \ol{\omega}(y^{-1}, yhz^{-1}, zaz^{-1})
\end{equation}

\noindent The second and fourth terms in expression \ref{remain} are again broken up using Equation \ref{3coc} as follows:\\
$\omega(y^{-1}, yay^{-1}, yhz^{-1}) = \ol{\omega}(y^{-1}, y, ay^{-1}) \us{A}{\underbracket{\ol{\omega}(y, ay^{-1}, yhz^{-1})}} \cancelto{9}{\omega(y^{-1}, y, ahz^{-1})}$\\
$\ol{\omega}(y^{-1}, yhz^{-1}, zaz^{-1}) = \us{G}{\underbracket{\omega(y^{-1}, y, hz^{-1})}} \us{F}{\underbracket{\omega(y, hz^{-1}, zaz^{-1})}} \cancelto{9}{\ol{\omega}(y^{-1}, y, haz^{-1})} $\\
and the first and the third terms in \ref{remain} taken together, is:\\
$\ol{\omega}(x^{-1}, xgy^{-1}, yhz^{-1}) \ol{\omega}(x, gy^{-1}, yhz^{-1}) = \us{G}{\underbracket{\omega(x^{-1}, x, gy^{-1})}} \us{G}{\underbracket{\ol{\omega}(x^{-1}, x, ghz^{-1})}}$

We now expand each of the terms in E, using Equation \ref{3coc} again:\\
\begin{tabular}{llll}
& $\omega(g, y^{-1}, yay^{-1})$ & $\omega(h, z^{-1}, zaz^{-1})$ & $\ol{\omega}(gh, z^{-1}, zaz^{-1})$\\
= & $\ol{\omega}(gy^{-1}, y, ay^{-1})$ &$\omega(g, y^{-1}, y)$ & $\omega(y^{-1}, y, ay^{-1})$\\
&$\ol{\omega}(hz^{-1}, z, az^{-1})$ & $\omega(h, z^{-1}, z)$ & $\cancelto{10}{\omega(z^{-1}, z, az^{-1})}$ \\
&  $\omega(ghz^{-1}, z, az^{-1})$ & $\ol{\omega}(gh, z^{-1}, z)$ & $\cancelto{10}{\ol{\omega}(z^{-1}, z, az^{-1})}$ \\
\end{tabular}\\
Call the terms in the first column as $E_1$ and the terms in the second column as $E_2$. The new A and F terms that popped up from breaking down \ref{remain}, and the $E_1$ and $E_2$ terms are added to the existing list:\\
\noindent
A terms: $= \ol{\omega}(x, ax^{-1}, xgy^{-1}) \ol{\omega}(y, ay^{-1}, yhz^{-1}) \omega(x, ax^{-1}, xghz^{-1})$\\
B terms: $\ol{\omega}(a, x^{-1}, xgy^{-1}) \ol{\omega}(a, y^{-1}, yhz^{-1}) \omega(a, x^{-1}, xghz^{-1})$\\
C terms:  $\omega(a,g,y^{-1})  \omega(a, h, z^{-1})  \ol{\omega}(a, gh,z^{-1})$\\
D terms: $\ol{\omega}(g, a, y^{-1}) \ol{\omega}(h, a, z^{-1}) \omega(gh, a, z^{-1}) $\\ 
$E_1$ terms: $\ol{\omega}(gy^{-1}, y, ay^{-1}) \ol{\omega}(hz^{-1}, z, az^{-1}) \omega(ghz^{-1}, z, az^{-1})$\\
$E_2$ terms: $\omega(g, y^{-1}, y) \omega(h, z^{-1}, z) \ol{\omega}(gh, z^{-1}, z)$\\
F terms: $\omega(x, gy^{-1}, yay^{-1})  \omega(y, hz^{-1}, zaz^{-1}) \ol{\omega}(x, ghz^{-1}, zaz^{-1})$\\
G terms: $\omega(x^{-1}, x, gy^{-1}) \omega(y^{-1}, y, hz^{-1}) \ol{\omega}(x^{-1}, x, ghz^{-1})$\\

 Thus we define ${\gamma}_{a,x,y}(g) = \ol{\omega}(x, ax^{-1}, xgy^{-1}) \ol{\omega}(a, x^{-1}, xgy^{-1}) \omega(a,g,y^{-1}) \ol{\omega}(g, a, y^{-1})$\\
 $\ol{\omega}(gy^{-1}, y, ay^{-1}) \omega(g, y^{-1}, y) \omega(x, gy^{-1}, yay^{-1}) \omega(x^{-1}, x, gy^{-1})$. \\
 This fits the bill.
\end{proof}
\section{Diagonal Subfactors}\label{diag}

In this section, we will describe the affine module category of the planar algebra of a diagonal subfactor associated associated to a `$ G $-kernel' where $ G $ is a finitely generated discrete group.
Recall that $ G $\textit{-kernel} is simply an injective homomorphism $ \chi : G \ra \t {Out} (N) $ where $ N $ is a $ II_1 $ factor.
If $ \alpha : G \ra \t {Aut} (N) $ is a lift of $ \chi $ (that is, $ \chi (g) = \alpha_g \; \t {Inn} (N)$ for all $ g \in G $) and $ I $ is a set of generators of $ G $, then the associated diagonal subfactor is given by
\[
N \ni x \hookrightarrow \t{diag} (\alpha_i (x))_{i\in I} \in M_I (N) =: M.
\]
Further for $ g_1,g_2 \in G$, we may choose $ u(g_1,g_2) \in \mcal U (N) $ such that $ \alpha_{g_1} \alpha_{g_2} = \t {Ad}_{u(g_1,g_2)} \alpha_{g_1g_2} $ for all $ g_1 , g_2 \in G $.
Associativity of multiplication in $ G $ gives us a $ 3 $-cocycle $ \omega : G^{ \times 3} \ra S^1 $ such that
\begin{equation}\label{uomega}
u(g_1, g_2) \; u(g_1 g_2, g_3) = \omega (g_1, g_2, g_3) \;  \alpha_{g_1} (u(g_2, g_3)) \;  u(g_1, g_2 g_3)
\end{equation}
for $ g_1, g_2, g_3 \in G $.
One may easily  check that the coboundary class of the $ 3 $-cocyle $\omega  $ in $ H^3 (G, S^1) $ does not depend on the choice of the lift $ \alpha $ and the unitaries $ u(\cdot , \cdot) $; this class is referred as the obstruction of the $ G $-kernel $ \chi $.
It is well-known (see \cite{frenchpop}) that the standard invariant of the above subfactor $ N \subset M $ depends only on the group $G$, its generators and the $3$-cocycle obstruction.

We will find the tube algebra of the category $ \mcal C_{NN} $ of $ N $-$ N $ bifinite bimodules coming from this subfactor and then find the tube representations.
Note that this will suffice since, by \cite{GJ}, the representation category of the tube algebra of $ \mcal C_{NN} $ is (tensor) equivalent to the category of annular representations with respect to any full weight set in ob$ (\mcal C_{NN}) $ (in particular, $ \left\{\left(_N L^2(M)_N \right)^{\us N \otimes k} : k\in \N\right\} $ which gives the affine modules of Jones).
In fact, if $\omega \equiv 1$, the affine modules were obtained in \cite{GJ}.

All simple objects in $ \mcal C_{N,N} $ are invertible.
This is clear because $ L^2(M) \us {N\t{-}N } \cong  \us {i\in I} \bigoplus L^2( N_{\alpha_i} ) $. Here the notation $_N L^2 (N_\theta)_N $ (for $ \theta \in \t{Aut} (N) $) denotes the bimodule obtained from the Hilbert space $ L^2 (N) $ with left $ N $-action being the usual left multiplication whereas the right one is twisted by $ \theta $. This bimodule depends only on the class defined by $ \theta $ in $ \t{Out} (N)$ up to isomorphism, and the tensor $ \us N \otimes $ and the contragradient of such bimodules correspond to multiplication and inverse in $ \t{Out} (N)$.
Since $ \t{id}_N $ is in the set $\{\alpha_i : i\in I\}  $, we get all such index one bimodules corresponding to any $ g \in G $, appearing as sub-bimodules of $ \left(_N L^2(M)_N \right)^{\us N \otimes k}  $ as we vary $ k $.
Moreover, up to isomorphism these are the only irreducible bimodules of $\mcal C _{N N} $.
Thus, the fusion algebra of $ \mcal C_{NN} $ is just given by $ G $.
It is then easy to verify that $ \mcal C_{NN} $ is tensor equivalent to the category $ \t {Vec} (G, \omega) $ of $ G $-graded vector spaces with associativity constraint given by the $ 3 $-cocycle obstruction $ \omega $.
So, our job boils down to finding out the tube representations of $ \t {Vec} (G, \omega) $.
However, we will work with bimodules in $ \mcal C_{NN} $ instead, as the framework will be useful in the next section.

Since the standard invariant (and thereby the category $ \mcal C_{NN} $) is independent of the lift $ \alpha $, without loss of generality we assume
$ \alpha_e = \t {id}_N $.
Further, we may set $ u(g_1,e) = 1_N = u(e,g_2) $ for all $ g_1 , g_2 \in G $.
These assumptions make the $ 3 $-cocycle $ \omega : G^{ \times 3} \ra S^1 $ normalized.
\comments{
\hrule
In this section, we will describe the affine module category of the planar algebra associated to the diagonal subfactor associated to the finite set $I$ indexing a subset $\{\alpha_i\}_{i\in I}$ (containing the identity) of the automorphism group of a $II_1$ factor $N$, that is,
\[
N \ni x \hookrightarrow \t{diag} (\alpha_i (x))_{i\in I} \in M_I (N) =: M.
\]
It is well known (see \cite{frenchpop}) that the standard invariant of this subfactor depends only on the group $G$ generated by $\{g_i := [\alpha_i] / {i\in I}\}$ in $\t{Out}(N)$ under the quotient map and a scalar $3$-cocycle $\omega$ of $G$.
\comments{
In this section, we will describe the tube algebra and its representations of the diagonal subfactor associated to the finite set $I$ indexing a subset $\{\alpha_i\}_{i\in I}$ (containing the identity) of the automorphism group of a $II_1$ factor $N$, that is,
\[
N \ni x \hookrightarrow \t{diag} (\alpha_i (x))_{i\in I} \in M_I (N) =: M.
\]
It is well known (see \cite{frenchpop}) that the standard invariant of this subfactor depends only on the group $G$ generated by $\{g_i := [\alpha_i] / {i\in I}\}$ in $\t{Out}(N)$ under the quotient map and a scalar $3$-cocycle $\omega$ of $G$.
}
We will find the tube algebra of the category $ \mcal C_{NN} $ of $ N $-$ N $ bifinite bimodules coming from this subfactor and then find the tube representations.
Note that this will suffice since, by \cite{GJ}, the representation category of the tube algebra of $ \mcal C_{N,N} $ is (tensor) equivalent to the representation category of the annular algebroid with respect to any full weight set in ob$ (\mcal C_{NN}) $ (in particular, $ \left\{\left(_N L^2(M)_N \right)^{\us N \otimes k} : k\in \N\right\} $ which gives the affine modules of Jones).
In fact, if $\omega \equiv 1$, the affine modules were obtained in \cite{GJ}.

All simple objects in $ \mcal C_{N,N} $ are invertible.
This is clear because $ L^2(M) \us {N\t{-}N } \cong  \us {i\in I} \bigoplus L^2( N_{\alpha_i} ) $. Here the notation $_N L^2 (N_\theta)_N $ (for $ \theta \in \t{Aut} (N) $) denotes the bimodule obtained from the Hilbert space $ L^2 (N) $ with left $ N $-action being the usual left multiplication whereas the right one is twisted by $ \theta $; up to isomorphism, this bimodule depends only on the class defined by $ \theta $ in $ \t{Out} (N)$, and the tensor $ \us N \otimes $ and the contragradient of such bimodules correspond to multiplication and inverse in $ \t{Out} (N)$.
Since $ \t{id}_N $ is in the set $\{\alpha_i : i\in I\}  $, we get all such index one bimodules corresponding to any $ g \in G $, appearing as sub-bimodules of $ \left(_N L^2(M)_N \right)^{\us N \otimes k}  $ as we vary $ k $.
Moreover, up to isomorphism these are the only irreducible bimodules of $\mcal C _{N N} $.
Thus, the fusion algebra of $ \mcal C_{NN} $ is just given by $ G $.
It is then easy to verify that $ \mcal C_{NN} $ is tensor equivalent to the category $ \t {Vec} (G, \omega) $ of $ G $-graded vector spaces with associativity constraint given by the $ 3 $-cocycle obstruction $ \omega $.
So, our job boils down to finding out the tube representations of $ \t {Vec} (G, \omega) $.
However, we will work with bimodules in $ \mcal C_{NN} $ instead since the framework will be useful in the next section.

Consider a representative map $ G \ni g \os \alpha \mapsto \alpha_g \in \t {Aut} (N) $ such that $ \alpha_g $ goes to $ g  $ under the quotient map over inner automorphisms and $ \alpha_e = \t {id}_N $.
For $ g_1,g_2 \in G$, choose $ u(g_1,g_2) \in \mcal U (N) $ such that $ \alpha_{g_1} \alpha_{g_2} = \t {Ad}_{u(g_1,g_2)} \alpha_{g_1g_2} $ and $ u(g_1,e) = 1_N = u(e,g_2) $ for all $ g_1 , g_2 \in G $.
Associativity of multiplication in $ G $ gives us a normalized $ 3 $-cocycle $ \omega : G^{ \times 3} \ra S^1 $ such that

\begin{equation}\label{uomega}
u(g_1, g_2) \; u(g_1 g_2, g_3) = \omega (g_1, g_2, g_3) \;  \alpha_{g_1} (u(g_2, g_3)) \;  u(g_1, g_2 g_3).
\end{equation}
\hrule}

For $ g \in G $, let $ X_g := _N L^2 (N_{\alpha_g})_N$.
The morphism space in $ \mcal C_{NN} $ from object $ U $ to object $ V $, will be denoted by $ \mcal C_{NN} (U,V) $.
The tube morphism from $ X_{g_1} $ to $ X_{g_2} $ is then given by $ \mcal T_{g_1, g_2} := \us {s\in G} \bigoplus \mcal T^s_{g_1,g_2}$ where $\mcal T^s_{g_1,g_2} = \mcal C_{NN} (X_{g_1} \otimes X_s , X_s \otimes X_{g_2}  )$.
Clearly, $ \mcal T_{g_1,g_2}  \neq \{0\}$ if and only if $ g_1 $ and $ g_2 $ are conjugates of each other.
Further, if $ g_1 = s g_2 s^{-1} $, then $ \mcal T^s_{g_1, g_2} $ is one-dimensional; we will fix a distinguished element in this space, namely, $ a(g_1 , s , g_2) $ defined by
\[
X_{g_1} \us N \otimes X_s \; \ni \; [1]_{g_1} \us N \otimes [1]_s \; \os {a(g_1,s,g_2)} \longmapsto \; [u(g_1 , s) u^*(s, g_2)]_s \us N \otimes [1]_{g_2} \; \in \; X_s \us N \otimes X_{g_2}.
\]
It is an easy exercise to check that the above map is indeed an $ N $-$ N $-linear unitary.

Before we multiply two nonzero tube morphisms $ a(g_1 , s , g_2) $ and $ a(g_2, t , g_3) $, we need to know the one dimensional spaces $ \mcal C_{NN} (X_s \us N \otimes X_t , X_{st})  = \C \left\{[1]_s \us N \otimes [1]_t \os {\beta_{s,t}} \longmapsto [u(s,t) ]_{st}\right\}$ and $ \mcal C_{NN} (X_{st} , X_s \us N \otimes X_t) = \C \left\{ [1]_{st} \os {\beta^*_{s,t}} \longmapsto  [u^* (s,t)]_s \us N \otimes [1]_t \}\right\}$.
Following the multiplication defined in \cite[Section 3]{GJ}, we have
\[
a(g_2,t,g_3) \cdot a(g_1,s,g_2) = \beta_{s,t} \us N \otimes \t {id}_{X_{g_3}} \; \circ \; \t {id}_{X_s} \us N \otimes a(g_2 ,t , g_3) \; \circ \; a(g_1, s, g_2) \us N \otimes \t {id}_{X_t} \; \circ \; \t {id}_{X_{g_1}} \us N \otimes \beta^*_{s,t} .
\]
Right from the definitions, one can easily see that $ a(g_2,t,g_3) \cdot a(g_1,s,g_2) $ sends $ [1]_{g_1} \us N \otimes [1]_{st} $ to
\[
\left[ \alpha_{g_1} (u^* (s,t)) \; u(g_1,s) \; u^* (s,g_2) \; \alpha_s \left( u(g_2, t) u^* (t,g_3) \right) \; u(s,t) \right]_{st} \us N \otimes [1]_{g_3}
\]
Now,
\begin{align*}
&\; \alpha_{g_1} (u^* (s,t)) \; u(g_1,s) \; u^* (s,g_2) \; \alpha_s \left( u(g_2, t) u^* (t,g_3) \right) \; u(s,t)\\
= &\;  \omega (g_1,s,t) \; u(g_1,st) \; u^* (\us {=s g_2} {g_1 s} ,t) \; u^* (s,g_2) \; \alpha_s ( u(g_2, t) )  \; \alpha_s ( u^* (t,g_3) ) \; u(s,t)\\
&\; \t {(using Equation \ref{uomega} on the first two terms)}\\
= &\;  \omega (g_1,s,t) \; u(g_1,st) \; \ol \omega (s , g_2 , t) \; u^* (s, \us {= t g_3} {g_2 t})  \; \alpha_s ( u^* (t,g_3) ) \; u(s,t)\\
&\; \t {(using Equation \ref{uomega} on the third, fourth and fifth terms)}\\
= &\;  \omega (g_1,s,t) \; u(g_1,st) \; \ol \omega (s , g_2 , t)  \; \omega (s,t,g_3) \; u^*(st,g_3)\\
&\; \t {(using Equation \ref{uomega} on the last three terms)}\\
= &\; \left[\omega (g_1,s,t) \; \ol \omega (s , g_2 , t)  \; \omega (s,t,g_3)\right] \; u(g_1,st) \; u^*(st,g_3)
\end{align*}
Thus, multiplication is given by
\[
a(g_2,t,g_3) \cdot a(g_1,s,g_2) =  \left[\omega (g_1,s,t) \; \ol \omega (s , g_2 , t)  \; \omega (s,t,g_3)\right] a(g_1 , st , g_3).
\]

Next we will obtain the $ * $-structure on the tube algebra which we denote by $ \# $ following the notation in \cite{GJ}.
For this, we need a standard solution to the conjugate equations for the pair $ (X_s , X_{s^{-1}}) $.
We set $ R_s := \beta^*_{s^{-1},s} : X_e \ra X_{s^{-1}} \us N \otimes X_s$ and $ \ol R_s := \left[ \omega (s, s^{-1} , s) \beta^*_{s, s^{-1}} \right] : X_e \ra X_s \us N \otimes X_{s^{-1}} $.
It is completely routine to check that $ (R_s , \ol R_s) $ satisfies the conjugate equation and is standard.
Now by \cite{GJ},
\[
\left( a(g_1, s , g_2)\right)^\# = \left[\t {id}_{X_{s^{-1}} } \us N \otimes \t {id}_{X_{g_1}} \us N \otimes (\ol R_s)^*\right] \circ \left[\t {id}_{X_{s^{-1}} } \us N \otimes (a(g_1,s,g_2))^* \us N \otimes \t {id}_{X_{s^{-1}} }\right] \circ \left[R_s \us N \otimes \t {id}_{X_{g_2} } \us N \otimes \t {id}_{X_{s^{-1}} }\right].
\]
The map $ (a(g_1 , s , g_2))^* $ sends $ [1]_s \us N \otimes [1]_{g_2} $ to $ [u(s,g_2) u^* (g_1,s)]_{g_1} \us N \otimes [1]_s $.
Using all the three maps $ (a(g_1 , s , g_2))^* $, $ R_s $ and $ \ol R_s $, we can express the image of $ [1]_{g_2} \us N \otimes [1]_{s^{-1}} $ under $ \left( a(g_1, s , g_2)\right)^\# $ as
\[
\left[ \us {= \omega (s^{-1} , s , s^{-1})} {\ol \omega (s , s^{-1} , s)} \; u^*(s^{-1} , s) \; \alpha_{s^{-1}} \left( u(s,g_2) u^* (g_1 , s) \right) \; \alpha_{s^{-1}} (\alpha_{g_1} (u(s,s^{-1}))) \right]_{s^{-1}} \us N \otimes [1]_{g_1}.
\]
We will simplify the first tensor component in the following way:
\begin{align*}
& \; \omega (s^{-1} , s , s^{-1}) \; u^*(s^{-1} , s) \; \alpha_{s^{-1}} ( u(s,g_2) ) \; \alpha_{s^{-1}} \left( u^* (g_1 , s) \alpha_{g_1} (u(s,s^{-1}))\right)\\
= & \; \omega (s^{-1} , s , s^{-1}) \; \ol \omega (s^{-1},s,g_2) \; u^* (s^{-1}, sg_2) \; \ol \omega (g_1 , s , s^{-1}) \; \alpha_{s^{-1}} (u(\us {=sg_2} {g_1s} , s^{-1}))\\
& \; \t{(using Equation \ref{uomega} on the second and third, and fourth and fifth terms separately)}\\
= & \; \omega (s^{-1} , s , s^{-1}) \; \ol \omega (s^{-1},s,g_2) \; \ol \omega (g_1 , s , s^{-1}) \; \ol \omega (s^{-1} , s g_2 , s^{-1}) \; u(g_2 , s^{-1}) \; u^* (s^{-1} , g_1)\\
& \; \t{(using Equation \ref{uomega} on the third and fifth terms)}\\
= & \; \omega (s^{-1} , s , s^{-1}) \; \ol \omega (g_1 , s , s^{-1}) \; \omega (s, g_2, s^{-1}) \; \ol \omega (s^{-1},s, \us {=s^{-1} g_1} {g_2 s^{-1}}) \; u(g_2 , s^{-1}) \; u^* (s^{-1} , g_1)\\
& \; \t{(using Equation \ref{3coc} on the second and fourth terms)}\\
= & \; \left(\ol \omega (g_1 , s , s^{-1}) \; \omega (s, g_2, s^{-1}) \; \ol \omega (s , s^{-1}, g_1)\right) \; \left(u(g_2 , s^{-1}) \; u^* (s^{-1} , g_1)\right)\\
& \; \t{(using Equation \ref{3coc} on the first and fourth terms)}
\end{align*}
Hence, $ \# $ is given by the formula:
\[
\left(a(g_1 , s , g_2)\right)^\# = \left[\ol \omega (g_1 , s , s^{-1}) \; \omega (s, g_2, s^{-1}) \; \ol \omega (s , s^{-1}, g_1)\right] \; a(g_2, s^{-1} , g_1).
\]

The canonical (faithful) trace $ \Omega $ on the tube algebra (as defined in \cite{GJ}) is given by $ \Omega (a(g_1,s,g_2)) = \delta_{g_1 = g_2} \delta_{s=e}$.
Thus, the set $ \{a(g_1,s,g_2) : g_1,g_2,s \in G \t{ satisfying } g_1 s = s g_2 \}$ becomes an orthonormal basis with respect to the inner product arising from $ \Omega $ and $ \# $.

To have a better understanding of the $ * $-algebra structure of the tube algebra, we will now set up some notations.
Let $ \mscr C $ denote the set of conjugacy classes of $ G $.
For each $ C \in \mscr C $, we pick a representative $ g_C \in C $ and for each $ g \in C $, we fix $ w_g \in G $ such that $ g = w_g \; g_C \; w^{-1}_g $ and $ w_{g_C} = e$.
Also for $ C \in \mscr C $, we will denote the centralizer subgroup of $ g_C $ by $ G_C := \{ s \in G : g_C = s \; g_C \; s^{-1}\} $, and $ \vphi_C$ will denote the $ 2 $-cocycle on $ G_C $ given by $\vphi_C (s,t) := \ol \vphi_{g_C} (t^{-1} , s^{-1} ) $ (recall the definition of $ \vphi_{g_C} $ in Lemma \ref{2coc}).

With the above notation, we give an alternate description of $ * $-algebra structure of the tube algebra in the following proposition which will be handy in classifying the representations.
\begin{thm}\label{diagtubalg}
(i) The tube algebra $ \mcal T  = (( \mcal T_{g_1 , g_2} ))_{\t {fin. supp.}}$ is isomorphic to $ \us {C \in \mscr C} \bigoplus \; M_C \otimes \left[\C G_C\right]_{\vphi_C} $ as a $ * $-algebra where $ \left[\C G_C\right]_{\vphi_C}  $ is the $ 2 $-cocycle twisted group algebra and $ M_C $ denotes the $ * $-algebra of finitely supported matrices whose rows and columns are indexed by elements of $ C $.\\
(ii) Every Hilbert space representation $\Pi : \mcal T \ra \mcal L (V)$ decomposes over $ C \in \mscr C $ uniquely (up to isomorphism) as an orthogonal direct sum of submodules generated by the range of the projection $\Pi (a(g_C, e, g_C)) $ (which is the $ {g_C}^{\t {th}} $-space of $ V $).
(We will call a representation of $ \mcal T $ `supported on $ C \in \mscr C$' if it is generated by its vectors in the $ {g_C}^{\t {th}} $-space.)
The category of $ C $-supported representations of $ \mcal T $ is additively equivalent to representation category of $ [\C G_C]_{\vphi_C} $.
\end{thm}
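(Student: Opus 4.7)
For (i), I would first observe that multiplication in $\mcal T$ respects the decomposition $\mcal T = \us{C \in \mscr C} \bigoplus \mcal T_C$, where $\mcal T_C := \us{g_1, g_2 \in C} \bigoplus \mcal T_{g_1, g_2}$, since $\mcal T_{g_1, g_2} = \{0\}$ unless $g_1$ and $g_2$ are conjugate, and the explicit multiplication rule keeps the outer group elements in a single conjugacy class. Inside $\mcal T_C$, the condition $g_1 s = s g_2$ combined with $g_i = w_{g_i} g_C w_{g_i}^{-1}$ forces a unique factorization $s = w_{g_1}\, g\, w_{g_2}^{-1}$ with $g \in G_C$, so the basis $\{a(g_1, s, g_2)\}$ of $\mcal T_C$ is naturally parametrized by $C \times C \times G_C$.

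The main technical step is to rescale this basis so as to absorb all the $3$-cocycle clutter in the multiplication rule into the single $2$-cocycle $\vphi_{g_C}$. Specializing Proposition \ref{go} with $a = g_C$, $x = w_{g_1}$, $y = w_{g_2}$, $z = w_{g_3}$ and taking complex conjugates gives
\[
\omega(g_1,s,t)\,\ol\omega(s,g_2,t)\,\omega(s,t,g_3) = \ol\gamma_{g_C,w_{g_1},w_{g_2}}(g)\,\ol\gamma_{g_C,w_{g_2},w_{g_3}}(h)\,\gamma_{g_C,w_{g_1},w_{g_3}}(gh)\,\ol\vphi_{g_C}(g,h),
\]
where $h \in G_C$ corresponds to $t$. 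Setting $\tilde a(g_1, s, g_2) := \gamma_{g_C, w_{g_1}, w_{g_2}}(g)\, a(g_1, s, g_2)$, the $\gamma$-factors pair up and
\[
\tilde a(g_2, t, g_3)\cdot \tilde a(g_1, s, g_2) = \ol\vphi_{g_C}(g, h)\,\tilde a(g_1, st, g_3).
\]
I then define $\Phi_C : \mcal T_C \to M_C \ot [\C G_C]_{\vphi_C}$ by $\Phi_C(\tilde a(g_1, s, g_2)) := E_{g_2, g_1} \ot v_{g^{-1}}$; the $g_1 \leftrightarrow g_2$ swap and inversion $g \mapsto g^{-1}$ are forced so that tube composition matches standard matrix multiplication while simultaneously producing $v_{h^{-1}} v_{g^{-1}} = \varphi_C(h^{-1}, g^{-1}) v_{(gh)^{-1}} = \ol\vphi_{g_C}(g, h)\, v_{(gh)^{-1}}$, using the defining identity $\varphi_C(s, t) = \ol\vphi_{g_C}(t^{-1}, s^{-1})$.

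For the $*$-compatibility I would apply Proposition \ref{go} a second time, now with $y = w_{g_2}$, $z = w_{g_1}$ and $h = g^{-1}$; using the easy fact $\gamma_{a, x, x}(e) = 1$ (a direct consequence of normalization of $\omega$ applied to the explicit formula for $\gamma$ in the proof of Proposition \ref{go}), the scalar $\ol\omega(g_1, s, s^{-1})\omega(s, g_2, s^{-1})\ol\omega(s, s^{-1}, g_1)$ appearing in $a(g_1, s, g_2)^\#$ comes out to precisely $\gamma_{g_C, w_{g_1}, w_{g_2}}(g)\gamma_{g_C, w_{g_2}, w_{g_1}}(g^{-1})\vphi_{g_C}(g, g^{-1})$, which is exactly what is needed to match $v_{g^{-1}}^* = \ol\varphi_C(g, g^{-1}) v_g$ under $\Phi_C$. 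Since $\Phi_C$ is a phase-rescaled bijection of bases, it is a $*$-isomorphism, and summing over $C$ proves (i).

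For (ii), $\Phi(a(g_C, e, g_C)) = E_{g_C, g_C} \ot 1$ is a minimal full idempotent of $M_C \ot [\C G_C]_{\vphi_C}$ whose corner subalgebra is $[\C G_C]_{\vphi_C}$; pulling back, $a(g_C, e, g_C)\,\mcal T\, a(g_C, e, g_C) \cong [\C G_C]_{\vphi_C}$. The idempotents $\Pi(a(g, e, g))$ ($g \in C$, $C \in \mscr C$) are mutually orthogonal self-adjoints whose total sum acts as the identity on any non-degenerate representation (arguing one conjugacy class at a time), yielding the claimed orthogonal decomposition $V = \us{C} \bigoplus V_C$. Within each $V_C$, the standard Morita equivalence for $M_C \ot [\C G_C]_{\vphi_C}$ through its full corner $E_{g_C, g_C} \ot 1$ implements the asserted category equivalence $V_C \mapsto \Pi(a(g_C, e, g_C)) V_C$. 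The main obstacle throughout is keeping the many conventions aligned — the direction of tube composition forces the nontrivial matrix-unit flip and group-inversion in the definition of $\Phi_C$ — but once these are in place, both the algebra and the $*$-structure checks are formal consequences of Proposition \ref{go}.
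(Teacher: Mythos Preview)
Your proof is correct and follows essentially the same approach as the paper: the map $\Phi_C$ you define is exactly the paper's $\Phi$ (your rescaling $\tilde a = \gamma\cdot a$ followed by $\tilde a \mapsto E_{g_2,g_1}\otimes v_{g^{-1}}$ is the paper's single assignment $a(g_1,s,g_2)\mapsto \ol\gamma_{g_C,w_{g_1},w_{g_2}}(w_{g_1}^{-1}sw_{g_2})\,E_{g_2,g_1}\otimes[w_{g_2}^{-1}s^{-1}w_{g_1}]$), and both the multiplication and $\#$-checks are carried out via the same two specializations of Proposition~\ref{go} together with $\gamma_{g_C,w_{g_1},w_{g_1}}(e)=1$. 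Part~(ii) is likewise argued identically through the full corner $E_{g_C,g_C}\otimes 1$.
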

\begin{proof}
(i) We will send the orthonormal basis of $ \mcal T $ (discussed above) via a map $ \Phi $ to a canonical basis of $ \us {C \in \mscr C} \bigoplus \; M_C \otimes \left[\C G_C\right]_{\vphi_C}  $ in the follwoing way:\\
for $ g_1 , g_2 \in C $  and $ s\in G $ such that $ g_1 s = s g_2 $ (implying  $ w^{-1}_{g_1} s w_{g_2} \in G_C $)
\[
a(g_1 , s , g_2) \; \os \Phi \longmapsto \; \ol \gamma_{g_C , w_{g_1} , w_{g_2}} (w^{-1}_{g_1} s w_{g_2}) \; E_{g_2,g_1} \otimes [w^{-1}_{g_2} s^{-1} w_{g_1}] 
\]
where we use the family of functions $\left\{ \gamma_{a,x,y}: G_a \ra S^1  \right\}_{a,x,y\in G}$ appearing in Proposition \ref{go}.

To show $ \Phi $ preserves multiplication, consider
\begin{align*}
& \; \Phi (a(g_2,t,g_3)) \; \Phi (a(g_1,t,g_2))\\
= & \; \left[\ol \gamma_{g_C , w_{g_2} , w_{g_3}} (w^{-1}_{g_2} t w_{g_3}) \; E_{g_3,g_2} \otimes [w^{-1}_{g_3} t^{-1} w_{g_2}]\right] \; \left[\ol \gamma_{g_C , w_{g_1} , w_{g_2}} (w^{-1}_{g_1} s w_{g_2}) \; E_{g_2,g_1} \otimes [w^{-1}_{g_2} s^{-1} w_{g_1}]\right]\\
= & \; \ol \gamma_{g_C , w_{g_1} , w_{g_2}} (w^{-1}_{g_1} s w_{g_2}) \; \ol \gamma_{g_C , w_{g_2} , w_{g_3}} (w^{-1}_{g_2} t w_{g_3}) \; \vphi_C (w^{-1}_{g_3} t^{-1} w_{g_2} \; , \; w^{-1}_{g_2} s^{-1} w_{g_1} ) \; E_{g_3,g_1} \otimes [w^{-1}_{g_3} (st)^{-1} w_{g_1}]\\
= &  \; \ol \gamma_{g_C , w_{g_1} , w_{g_2}} (w^{-1}_{g_1} s w_{g_2}) \; \ol \gamma_{g_C , w_{g_2} , w_{g_3}} (w^{-1}_{g_2} t w_{g_3}) \;  \gamma_{g_C , w_{g_1} , w_{g_3}} (w^{-1}_{g_1} st w_{g_3}) \; \ol \vphi_{g_C} (w^{-1}_{g_1} s w_{g_2} \; , \; w^{-1}_{g_2} t w_{g_3} )\\
& \; \Phi (a(g_1 , st, g_3))\\
= & \; \omega (g_1 ,s , t) \ol \omega (s,g_2 , t) \omega (s , t, g_3) \; \Phi (a(g_1 , st, g_3)) \t { (using Proposition \ref{remain})}\\
= & \; \Phi \left(a(g_2 , t, g_3) \; a(g_1 , s, g_2)\right).
\end{align*}
The map $ \Phi $ is $ * $ preserving because $ \left[ \Phi ( a(g_1,s,g_2) ) \right]^*
 $
\begin{align*}
= & \; \gamma_{g_C, w_{g_1} , w_{g_2} } (w^{-1}_{g_1} s w_{g_2}) \; \ol \vphi_C ( w^{-1}_{g_1} s w_{g_2} , w^{-1}_{g_2} s^{-1} w_{g_1} ) \; E_{g_1, g_2} \otimes [w^{-1}_{g_1} s w_{g_2}]\\
= & \; \gamma_{g_C, w_{g_1} , w_{g_2} } (w^{-1}_{g_1} s w_{g_2}) \; \vphi_{g_C} ( w^{-1}_{g_1} s w_{g_2} , w^{-1}_{g_2} s^{-1} w_{g_1} ) \; \gamma_{g_C , w_{g_2} , w_{g_1} } ( w^{-1}_{g_2} s^{-1} w_{g_1}) \; \Phi (a(g_2,s^{-1},g_1))\\
= & \; \gamma_{g_C , w_{g_1} , w_{g_1} } (e) \; \ol \omega (g_1 , s , s^{-1}) \; \omega (s , g_2 , s^{-1}) \; \ol \omega (s , s^{-1} , g_1) \; \Phi (a(g_2 , s^{-1} , g_1)) \; \t {(using Proposition \ref{remain})}\\
=& \; \Phi \left( \left[  a(g_1,s,g_2)  \right]^\# \right)
\end{align*}
where we use $ \gamma_{g_C , w_{g_1} , w_{g_1} } (e) =1$ at the very last step which follows directly from the definition of $ \gamma_{a,x,y} $ in the proof of Proposition \ref{remain}.

(ii) The decomposition follows easily from the $ * $-algebra structure described in part (i).

Fix $ C\in \mscr C $. If $ \Pi: \mcal T \ra \mcal L (W) $ is $ C $-supported, then we can define the representation $ \pi : [\C G_C]_{\vphi_C} \ra \mcal L (W_{g_C}) $ defined by $ \pi (s) = \Pi\left(\Phi^{-1} (E_{g_C , g_C} \otimes [s])\right) $.
Conversely, if $ \pi : [\C G_C]_{\vphi_C} \ra \mcal L (U) $ is a representation, then one can consider the unique extension 
\[
\Pi : \mcal T \ra \mcal L (l^2 (C) \otimes U) \; \t { defined by } \; \Pi \left( \Phi^{-1} (E_{g_1 , g_2} \otimes [s] ) \right) := \delta_{g_1 \in C} \; E_{g_1,g_2} \otimes \pi (s) .
\]
\end{proof}
\begin{rem}
Note that the canonical trace $ \Omega $ on $ \mcal T $ corresponds to the direct sum of the canonical traces on $ M_C \otimes [ \C G_C]_{\vphi_C} $.
Also, the $ * $-algebra $ \mcal T_{e,e} $ (by definition) is isomorphic to the fusion algebra which is basically the group algebra $\C G $ without any nontrivial $ 2 $-cocycle twist (since $ \vphi_{e}$ is the constant function $1$ which follows from its definition in Lemma \ref{2coc}).
Thus, the analytic properties (such as, amenability, Haagerup, property (T)) of the bimodule category corresponding to the subfactor $ N \subset M $ corresponds exactly to that of the group $ G $; this fact was obtained by Sorin Popa long time back in \cite{frenchpop} and \cite{Pop94}.
However, the analytic properties in the higher weight spaces (as defined in \cite{GJ}) depend on the corresponding centralizer subgroup.
\end{rem}
\comments{\begin{thm}\label{diagthm}
Every Hilbert representation $ V $ of $ \mcal T $ decomposes uniquely (up to isomorphism) as an orthogonal direct sum of submodules $ V^C := \lab V_{g_C} \rab$ for $ C \in \mscr C $.
(We will call a representation of $ \mcal T $ `supported on $ C \in \mscr C$' if it is generated by its vectors in the $ {g_C}^{\t {th}} $-space.)
The category of $ C $-supported representations of $ \mcal T $ is additively equivalent to representation category of $ [\C G_C]_{\vphi_C} $.
\end{thm}
\begin{proof}
The decomposition follows easily from the $ * $-algebra structure described in Proposition \ref{diagtubalg}.

For the second part, fix $ C\in \mscr C $. If $ \Pi: \mcal T \ra \mcal L (W) $ is $ C $-supported, then we can define the representation $ \pi : [\C G_C]_{\vphi_C} \ra \mcal L (W_{g_C}) $ defined by $ \pi (s) = \Pi\left(\Phi^{-1} (E_{g_C , g_C} \otimes [s])\right) $.
Conversely, if $ \pi : [\C G_C]_{\vphi_C} \ra \mcal L (U) $ is a representation, then one can consider the unique extension 
\[
\Pi : \mcal T \ra \mcal L (l^2 (C) \otimes U) \; \t { defined by } \; \Pi \left( \Phi^{-1} (E_{g_1 , g_2} \otimes [s] ) \right) := \delta_{g_1 \in C} \; E_{g_1,g_2} \otimes \pi (s) .
\]
\end{proof}}
\section{Bisch-Haagerup Subfactors}
In this section, we intend to find the tube algebra of the Bisch-Haagerup subfactor $N:= Q^{H} \subset Q \rtimes K = : M$ where $ H $ and $ K $ act outerly  on the $II_1$-factor $Q$.
It is well known that the planar algebra of $ N \subset M $ depends on the group $ G $ generated by $ H $ and $ K $ in $ \t {Out} (Q) $ and the scalar $ 3 $-cocycle obstruction (up to $ 2 $-coboundary) (see \cite{BH, BDG1}).

We first lay down the strategy to achieve our goal.
Instead of computing the tube algebra of $ \mcal C_{NN} $ directly (unlike the case of diagonal subfactors because the irreducible bimodules of $ \mcal C_{NN} $ for Bisch-Haagerup subfactors, are not so easy to work with), we will consider the affine annular algebra with respect to a particular full weight set (in the sense of  \cite[Definition 3.4]{GJ}) in $\t {ob} (\mcal C_{NN}) $, and then cut it down by the $ \t{Irr} \; \mcal C_{NN} $.

We need to set up some notations for this.
Pick a representative map $ \t {Out} (Q) \supset G \ni g \mapsto \alpha_g \in \t {Aut} (Q) $ such that $ g = \alpha_g \t {Inn} (Q) $, and $ \left.\alpha\right|_H : H \ra \t {Aut} (Q)$, $ \left.\alpha\right|_K : K \ra \t {Aut} (Q) $ are homomorphisms.
Now, if $ X = _N L^2 (M)_M $, $ Y = _N L^2 (Q)_Q $ and  $ Z = _Q L^2 (M)_M $, then
\[
\left( \; X \; \ol X \; \right)^{\us N \otimes m} = Y \us Q \otimes ( \; Z \; \ol Z \; ) \us Q \otimes ( \; \ol Y \; Y \; ) \us Q \otimes ( \; Z \; \ol Z \; ) \us Q \otimes \cdots \us Q \otimes ( \; Z \; \ol Z \; ) \us Q \otimes \ol Y.
\]
We know that $ ( \; \ol Y \; Y \; ) \us {Q\t {-} Q }\cong \; \us {h\in H} \bigoplus \; _Q  L^2 (Q_{\alpha_h})_Q$ and $ ( \; Z \; \ol Z \; ) \us {Q\t {-} Q } \cong \; \us {k\in K} \bigoplus \; _Q  L^2 (Q_{\alpha_k})_Q$.
So,
\[
\left( \; X \; \ol X \; \right)^{\us N \otimes k} \us {N\t {-} N }\cong \; \us {h_1,h_2,\ldots \in H}{\us {k_1,  k_2, \ldots \in K} \bigoplus} \; _N  L^2 (Q_{\alpha_{k_1} \alpha_{h_1} \alpha_{k_2} \alpha_{h_2} \cdots \alpha_{k_m}}) {} _N  \us {N\t {-} N }\cong \; \us {h_1,h_2,\ldots \in H}{\us {k_1,  k_2, \ldots \in K} \bigoplus} \; _N  L^2 (Q_{\alpha_{k_1 h_1 k_2 h_2 \cdots k_m}}) {} _N
\]
Since the subgroups $ H $ and $ K $ generate $ G $, therefore the set $\Lambda:= \left\{\left. X_g := {}_N L^2 (Q_{\alpha_g} ) {}_N \right| g \in G\right\} $ forms a full weight set in $ \mcal C_{NN} $.
It is possible to reduce the indexing set $ G $ of the weight set $ \Lambda $ further, since $ X_g \cong X_{gh} $ for all $ g \in G, h\in H $.
However, we will not do that since by reducing the weight set, one needs to work with coset representative which makes the calculations more cumbersome.

\subsection{Morphism spaces in $ \mcal C_{NN} $}$ \; $

For the affine annular algebra over $ G $ (indexing the above set), we do not need all morphism spaces of $ \mcal C_{NN} $.
We will instead concentrate on morphisms between elements of $\Lambda  $ and their tensor products.
Before that, we need more notations.
Choose a map $ u:G\times G \ra \mcal U (Q) $ such that $ \alpha_{g_1} \alpha_{g_2} = \t {Ad}_{u(g_1, g_2)} \alpha_{g_1 g_2} $ and 
\begin{equation}\label{ucond}
u \left( H\times H \; \cup \; K\times K \; \cup \; G \times \{e\} \; \cup \; \{e\} \times G \right) = \{1_Q\}.
\end{equation}
Again, associativity of multiplication in $ G $ and condition \ref{ucond} will give us a $ 3 $-cocycle $ \omega $ satisfying Equation \ref{uomega} and 
\begin{equation}\label{omegaHK1}
\left. \omega \right|_{H\times H \times H} \equiv 1 \equiv \left. \omega \right|_{K \times K \times K}.
\end{equation}
This along with Equation \ref{3coc}, implies $\omega (g, l ,l^{-1}) = \omega (gl, l^{-1}, l)$ and $\omega (l^{-1}, l , g) = \omega (l, l^{-1}, lg)$ for all $g \in G$, $l \in H \cup K$.
We will now prove a lemma on scalar cocycles which lets us choose the map $ u $ in such a way that the $ 3 $-cocycle $ \omega $ gets simplified making our calculations easy.
\begin{lem}\label{gllemma}
Any scalar $ 3 $-cocycle $\omega$ of a group $ G $ generated by subgroups $ H $ and $ K $, is coboundarily equivalent to $\omega^{\prime}$ which satisfies the relation \ref{omegaHK1} as well as
\begin{align}\label{gl}
\omega^{\prime} (g,l,l^{-1}) = 1 = \omega^{\prime} (l^{-1}, l , g) \text{ for all $g \in G$, $l \in H \cup K$.}
\end{align}
\end{lem}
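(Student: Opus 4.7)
The plan is to construct an explicit normalized 2-cochain $\vphi : G \times G \to S^1$ that equals $1$ on $(H \times H) \cup (K \times K)$, and to then set $\omega^{\prime} := (\partial \vphi) \cdot \omega$. The first property immediately forces $(\partial \vphi)|_{H^3} = 1 = (\partial \vphi)|_{K^3}$, so that $\omega^{\prime}$ inherits Equation \ref{omegaHK1} from $\omega$ automatically. The task thus reduces to choosing the remaining values of $\vphi$ so that the coboundary $\partial \vphi$ cancels $\omega(g, l, l^{-1})$ and $\omega(l^{-1}, l, g)$ for all $g \in G$ and $l \in H \cup K$.

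Before constructing $\vphi$, I record two useful reductions coming from Equation \ref{omegaHK1}. Applying the 3-cocycle identity \ref{3coc} to the quadruples $(g, l, l^{-1}, l)$ and $(l, l^{-1}, l, g)$, and using $\omega(l, l^{-1}, l) = 1$ (since $(l, l^{-1}, l)$ lies in $H^3$ when $l \in H$ and in $K^3$ when $l \in K$), one obtains
\[
\omega(g, l, l^{-1}) = \omega(gl, l^{-1}, l), \qquad \omega(l^{-1}, l, g) = \omega(l, l^{-1}, lg),
\]
for all $g \in G$ and $l \in H \cup K$. These identities simplify the bookkeeping considerably.

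Unpacking the coboundary formula and using $\vphi(g, e) = \vphi(e, g) = 1$, the target conditions $\omega^{\prime}(g, l, l^{-1}) = 1$ and $\omega^{\prime}(l^{-1}, l, g) = 1$ translate into the functional relations
\[
\vphi(g, l)\, \vphi(gl, l^{-1}) = \omega(g, l, l^{-1})\, \vphi(l, l^{-1}), \quad \vphi(l, g)\, \vphi(l^{-1}, lg) = \ol{\omega}(l^{-1}, l, g)\, \vphi(l^{-1}, l).
\]
My approach is to prescribe $\vphi(g, l)$ for $l \in H \cup K$ and $\vphi(l^{-1}, g)$ for $l \in H \cup K$ by closed-form expressions built out of $\omega$, setting $\vphi \equiv 1$ on all remaining pairs, and then verify the two relations by direct computation using the reductions above.

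The main obstacle is compatibility: a single $\vphi$ must satisfy both functional equations at once while also equalling $1$ on $(H \times H) \cup (K \times K)$, which is a nontrivial overdetermined system because the two equations share the value $\vphi(l, l^{-1})$ and because $H$ and $K$ may intersect nontrivially. If a direct single-step construction proves unwieldy, I would split the modification into two coboundary steps: first a coboundary enforcing $\omega^{\prime}(g, l, l^{-1}) = 1$ while preserving Equation \ref{omegaHK1}, and then a second coboundary, supported so as not to destroy the first step, enforcing $\omega^{\prime}(l^{-1}, l, g) = 1$. Since $\omega^{\prime}$ differs from $\omega$ only by a coboundary, the 3-cocycle property of $\omega^{\prime}$ is automatic, so no further verification is needed on that front.
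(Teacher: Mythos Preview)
Your overall strategy matches the paper's: modify $\omega$ by a coboundary $\partial\vphi$ with $\vphi$ normalized and equal to $1$ on $(H\times H)\cup(K\times K)$, so that relation \ref{omegaHK1} is preserved automatically, and then arrange the remaining values of $\vphi$ so that the two target identities hold. You also correctly isolate the identities $\omega(g,l,l^{-1})=\omega(gl,l^{-1},l)$ and $\omega(l^{-1},l,g)=\omega(l,l^{-1},lg)$, which are exactly what the argument runs on. However, the proposal stops precisely at the point where the actual work begins: you do not construct $\vphi$, and the ``compatibility obstacle'' you flag is not a side issue but the entire content of the lemma.

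Concretely, your plan to ``prescribe $\vphi(g,l)$ and $\vphi(l^{-1},g)$ by closed-form expressions and set $\vphi\equiv 1$ elsewhere'' cannot work as stated. The functional equation $\vphi(g,l)\,\vphi(gl,l^{-1})=\omega(g,l,l^{-1})$ involves \emph{two} values of the prescribed type, so a single uniform formula would have to satisfy a nontrivial quadratic constraint; and pairs such as $(k,h)$ with $k\in K\setminus H$, $h\in H\setminus K$ fall under both prescriptions simultaneously. The paper resolves this by observing that the involutions $(g_1,g_2)\mapsto(g_1g_2,g_2^{-1})$ and $(g_1,g_2)\mapsto(g_1^{-1},g_1g_2)$ partition the relevant pairs into two-element orbits, and defines $\vphi$ to equal the required $\omega$-value on \emph{one} chosen representative of each orbit and $1$ on the other; the functional equations are then automatic. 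The delicate point is exactly the overlap $(K\setminus H)\times(H\setminus K)$ and $(H\setminus K)\times(K\setminus H)$, where one must choose the representatives of the two orbit structures compatibly so that the two representative sets are disjoint. This careful choice of representatives is the missing idea in your sketch; your fallback of two successive coboundary steps would face the same overlap problem and is not obviously easier.
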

\begin{proof}
Consider the subsets
$A_H = (H^\complement \times H^{\times})$,
$A_K = (K^\complement \times K^{\times})$,
$V_H = (H^{\times} \times H^\complement)$,
and
$V_K =(K^{\times} \times K^\complement)$ of
$G\times G$, and
the order $2$ bijections
$G\times G \ni (g_1,g_2) \hat{\mapsto}
(g_1,g_2)\hat{} = (g_1g_2,g^{-1}_2) \in G\times G$
and
$G\times G \ni (g_1,g_2) \check{\mapsto}
(g_1,g_2)\check{} = (g^{-1}_1,g_1g_2) \in
G\times G$
(where $H^\times = H \setminus \{e\}$ and
$K^\times = K \setminus \{e\}$). Note
that $A_H$ and $A_K$ (resp. $V_H$ and $V_K$)
are separately closed under $\hat{}$
(resp. $\check{}$ ) and have no fixed
points. Now,
$A_H \cap V_K = (K \setminus H) \times (H
\setminus K)$ (resp.
$A_K \cap V_H = (H \setminus K) \times (K
\setminus H)$) is mapped into
$A_H \setminus V_K$ (resp.
$A_K \setminus V_H$) under $\hat{}$ and into
$V_K \setminus A_H$ (resp.
$V_H \setminus A_K$) under $\check{}$ .
We choose

(i) a representative in each orbit of
$\; \hat{}$ inside ${A_H \cup A_K}$ such
that
the representative of the orbit containing
$(k,h) \in A_H \cap V_K$ is chosen as
$(kh,h^{-1})$ and the representative of the
one containing $(h,k) \in A_K \cap V_H$ is
chosen as $(hk,k^{-1})$,

(ii) a representative in each orbit of
$\; \check{}$ inside ${V_H \cup V_K}$ such
that
the representative of the orbit containing
$(k,h) \in A_H \cap V_K$ is chosen as
$(k^{-1},kh)$ and the representative of the
one containing $(h,k) \in A_K \cap V_H$ is
chosen as $(h^{-1},hk)$.

Let $A$ (resp. $V$) be the set of
representatives in $A_H \cup A_K$ (resp.
$V_H \cup V_K$). From our choice, it can be
verified that $A \cap V = \emptyset$. Define
$\vphi : G\times G \rightarrow \mathbb T$ by:

(a) $\vphi |_{G\times G \setminus (A \cup V)} = 1$,

(b)
$\vphi (g,l) = \omega (g,l,l^{-1}) = \omega
(gl, l^{-1},l)$ for $(g,l) \in A$,

(c)
$\vphi (l,g) = \omega (l^{-1},l,g) = \omega
(l, l^{-1},lg)$ for $(l,g) \in V$.

It follows that $\partial^2 ( \vphi ) $ is
normalized since $\omega$ is also so, and
$\partial^2 ( \vphi ) $ satisfies the
relation \ref{omegaHK1} since
$(H\times H \cup K \times K) \cap (A \cup V) = \emptyset$
(where $\partial^2$ denotes the $2$-cochain
map). Thus, the $3$-cocycle
$\omega^\prime = \partial^2 (\vphi) \cdot
\omega$ is normalized and satisfies relation \ref{omegaHK1}.

For relation \ref{gl}, we consider $g \in G$
and $l \in H \cup K$. Without loss of generality,
we assume $g \neq e \neq l$. So,
$\left\{ (g,l) , (gl,l^{-1}) \right\}$ (
resp. $\left\{ (l,g) , (l^{-1},lg)
\right\}$) is an orbit of $\; \hat{}$ (resp.
$\check{} \;$) in $A_H \cup A_K$ (resp.
$V_H \cup V_K$), and $\phi$ takes the value
$\omega (g , l , l^{-1}) = \omega (gl ,
l^{-1} , l)$ (resp.
$\omega (l^{-1} , l , g) = \omega (l ,
l^{-1} , lg)$) on the representative of the
orbit and $1$ on the other. This implies
\[
\omega^\prime (g, l, l^{-1}) = \ol{\vphi} (
gl,l^{-1}) \; \ol{\vphi} (g,l) \; \omega (g,
l, l^{-1}) = 1
\]
since $\vphi(g,e) = 1 = \vphi(l,l^{-1})$, and
similarly
$\omega^\prime (l^{-1} , l , g) = 1$.
\end{proof}
By the above lemma, without loss of generality, we may assume $ \omega $ satisfies:
\begin{align}\label{GL}
	\begin{tabular}{rrcl}
\text{(i)} & $\omega (g_1 ,  l , l^{-1}) $ & = \; 1 \; = &
$ \ol{\omega} (g_1 , l , l^{-1})$\\
\text{(ii)} & $\omega (g_1 , g_2 , l)$ & = &
$\ol{\omega} (g_1 , g_2 l , l^{-1})$\\
\text{(iii)} & $\omega (g_1, l, g_2)$ & = &
$\ol{\omega} (g_1 l,l^{-1}, l g_2)$\\
\text{(iv)} & $\omega (l, g_1,g_2)$ & = &
$\ol{\omega} (l^{-1}, l g_1,g_2)$\\
\text{(v)} & $u(g_1, l)$ & = &
$u^*(g_1 l , l^{-1})$\\
\text{(vi)} & $u(l ,g_2)$ & = &
$\alpha_l \left( u^* (l^{-1} , l g_2) \right)$
	\end{tabular}
\end{align}
for all $g_1, g_2 \in G$, $l \in H \cup K$ (where (ii), (iii) and (iv) are immediate implication of (i) and \ref{2coc}).
We will need the relation \ref{GL} only when $ l \in H $; however, we gave the general version, in case any reader is interested to see the actual $ 2 $-category of $ N\subset M $ instead of just $ \mcal C_{NN} $.
\begin{prop}\label{smbox}
The morphism space $ \mcal C_{NN} (X_{g_1} , X_{g_2}) $ is zero unless $ g_1 $ and $ g_2 $ give the same $ H $-$ H $ double coset, and if they do, the space has a basis given by
\[
B_{g_1,g_2} := \left\{ \left.
\begin{tabular}{l}
\\
$ X_{g_1} \ni [x]_{g_1} \os {  } \longmapsto [ \alpha_{h_1} (x) u(h_1 , g_1) u^* (g_2 , h_2)]_{g_2} \in X_{g_2} $\\
denoted by the symbol \raisebox{-1.4 em}{\1disc{$h_1 $}{$ g_1 $}{$ g_2 $}{$ h_2 $}}
\end{tabular}
\; \right| \; 
\begin{tabular}{l}
$ h_1, h_2 \in H $\\
such that \\
$ h_1 g_1 = g_2 h_2 $
\end{tabular} \right\}.
\]
\end{prop}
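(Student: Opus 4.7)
I plan to proceed in three stages: verifying the proposed maps are $N$-$N$ bimodule morphisms, establishing linear independence, and proving spanning (which simultaneously yields the $H$-$H$ double-coset vanishing). For the verification, fix $(h_1, h_2) \in H \times H$ with $h_1 g_1 = g_2 h_2$. Left $N$-linearity is immediate since $\alpha_{h_1}|_N = \t{id}_N$ (because $N = Q^H$). For right $N$-linearity, apply the candidate map to $[x \alpha_{g_1}(n)]_{g_1}$ with $n \in N$ and successively use relation \eqref{uomega}: first to move $\alpha_{h_1}(\alpha_{g_1}(n))$ past $u(h_1, g_1)$, producing $\alpha_{h_1 g_1}(n) = \alpha_{g_2 h_2}(n)$; then again on the right to split this as $u^*(g_2, h_2) \alpha_{g_2}(\alpha_{h_2}(n)) u(g_2, h_2)$; and finally invoke $\alpha_{h_2}|_N = \t{id}_N$ to obtain the required cancellation.

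For linear independence, I would evaluate an arbitrary vanishing combination $\sum c_{h_1, h_2} T_{h_1, h_2} = 0$ on $[q]_{g_1}$ for $q$ ranging over all of $Q$, not just over $N$ (this broader domain is essential). Since $h_1 g_1 = g_2 h_2$ determines $h_2$ uniquely from $h_1$, the equation collapses to $\sum_{h_1} c_{h_1} \alpha_{h_1}(q) w_{h_1} = 0$ in $L^2(Q)$, where $w_{h_1} := u(h_1, g_1) u^*(g_2, h_2)$ is a nonzero unitary in $Q$. Outerness of the $H$-action on $Q$ ensures that the automorphisms $\{\alpha_{h_1}\}_{h_1 \in H}$ are linearly independent as operators on $L^2(Q)$, so each coefficient $c_{h_1}$ must vanish.

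For spanning, I plan to exploit the Galois structure of the outer inclusion $N = Q^H \subset Q$. The $H$-action on $L^2(Q)$ commutes with both left and right $N$-multiplication (since $\alpha_h|_N = \t{id}_N$), giving an isotypic decomposition $L^2(Q) = \bigoplus_{h \in H} N v_h$ via a Pimsner--Popa-type basis $\{v_h\}_{h \in H}$ coming from the outer Galois extension. Any morphism $T : X_{g_1} \to X_{g_2}$ is determined by its action on this basis, and the twisted right $N$-linearity constrains the $(h_1, h_2)$-component to vanish unless $h_1 g_1 = g_2 h_2$. When no such pair exists, i.e., when $g_1, g_2$ lie in distinct $H$-$H$ double cosets, the morphism space is forced to be zero. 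The main obstacle lies here: one must carefully match each surviving component to the explicit formula $[x]_{g_1} \mapsto [\alpha_{h_1}(x) u(h_1, g_1) u^*(g_2, h_2)]_{g_2}$, tracking how the Pimsner--Popa coefficients interact with the $2$-cocycle data $u(\cdot, \cdot)$ to produce the stated unitary normalization, and confirming that the dimension count $|H \cap g_2 H g_1^{-1}|$ agrees with the number of solutions of $h_1 g_1 = g_2 h_2$ in $H \times H$.
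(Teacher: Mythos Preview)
Your verification (Stage~1) and linear-independence argument (Stage~2) are sound and more direct than the paper's route. The paper instead applies Frobenius reciprocity to identify $\mcal C_{NN}(X_{g_1},X_{g_2})$ with the space $V=\{y\in Q\rtimes H:\alpha_{g_1}(n)y=y\alpha_{g_2}(n)\text{ for all }n\in N\}$, writes $y=\sum_h y_h h$, and then invokes the standard relative-commutant fact for the outer fixed-point inclusion $N=Q^H\subset Q$: a nonzero $y_h\in Q$ satisfying $n\,y_h=y_h\,\theta(n)$ for all $n\in N$ forces $y_h$ to be a scalar times a unitary with $\t{Ad}_{y_h}\circ\theta\in\{\alpha_{h'}:h'\in H\}$. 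This simultaneously yields the double-coset vanishing and pins down each $y_h$ up to scalar.

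Your Stage~3, however, has a genuine conceptual gap. You invoke an ``isotypic decomposition $L^2(Q)=\bigoplus_{h\in H}Nv_h$'' coming from the $H$-action, but no such $H$-indexed decomposition exists: the $H$-action on $L^2(Q)$ (which, as you correctly note, commutes with both left and right $N$-multiplication) gives isotypic components indexed by $\widehat H$, not by $H$, and a Pimsner--Popa basis for $N\subset Q$ has $|H|$ elements but carries no canonical $H$-labelling. Consequently there is no natural ``$(h_1,h_2)$-component'' of a morphism $T$ from which the constraint $h_1g_1=g_2h_2$ would fall out. The structure you actually need is the crossed-product description of the commutant: any $N$-$N$ intertwiner $T:X_{g_1}\to X_{g_2}$ lies in $N'\cap B(L^2(Q))=\langle JQJ,\,\{U_h\}_{h\in H}\rangle\cong (JQJ)\rtimes H$, so $T=\sum_{h}Jy_h^*J\,U_h$ for unique $y_h\in Q$, and the twisted right $N$-linearity then forces $\alpha_{g_2}(n)y_h^*=y_h^*\alpha_h\alpha_{g_1}(n)$ for each $h$. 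From here one is back to exactly the relative-commutant fact the paper uses. So your spanning argument, once repaired, collapses to the paper's; the Pimsner--Popa framing does not furnish an independent route.
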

\begin{proof}
By Frobenius reciprocity, $ \t {dim}_\C \left( \mcal C_{NN} (X_{g_1} , X_{g_2}) \right) = \t {dim}_\C \left( \mcal C_{NN} \left( _N L^2 (N) _N \; , \; \ol X_{g_1} \us N \otimes X_{g_2}\right) \right) $.
Again $  \ol X_{g_1} \us N \otimes X_{g_2} \us {Q\t {-} Q} \cong \; {}_{\alpha_{g_1}} \left[ Q \rtimes H \right] {}_{\alpha_{g_2}}$ where the left and right actions of $ Q $ on $ Q\rtimes H $ is twisted by $ \alpha_{g_1} $ and $ \alpha_{g_2} $ respectively.
Any element of $ \mcal C_{NN} \left( _N L^2 (N) _N \; , \; {}_{\alpha_{g_1}} \left[ Q \rtimes H \right] {}_{\alpha_{g_2}}\right) $ corresponds to an element of $ Q\rtimes H $ (the image of $ \hat 1 $), say $y = \us {h\in H} \sum y_h \; h$.
By $ N $-$ N $ linearity, we will have $ \alpha_{g_1} (n) \; y_h = y_h \; \alpha_h (\alpha_{g_2} (n)) $ for all $  n\in N, h\in H $, equivalently
\[
n \; \alpha^{-1}_{g_1} (y_h) = \alpha^{-1}_{g_1} (y_h) \; \alpha^{-1}_{g_1}(\alpha_h (\alpha_{g_2} (n))) \t{ for all }  n\in N, h\in H.
\]

The following is a well-known fact for the fixed-point subfactor $ N\subset Q $ of an outer action of $ H $. For $ y \in Q $ and $ \theta \in \t {Aut} (Q) $, the following are equivalent:

(i) $ y \neq 0 $ and $ n y = y \theta (n) $ for all $ n \in N = Q^H $,

(ii) $ y_0 := \displaystyle \frac {y}{\norm{y}} \in \mcal U (Q)$ and $ \t{Ad}_{y_0} \circ \theta \in \{\alpha_h : h \in H\} $.

By the above fact, $ y \neq 0 $ only when there exists $ h_1 , h_2  \in H$ such that $ \alpha^{-1}_{g_1} \alpha_{h_1} \alpha_{g_2} \alpha_{h_2} \in \t {Inn} (Q)$, equivalently $ g_1 $ and $ g_2 $ generate the same $ H $-$ H $ double coset.
In particular, $ y_h = 0$ unless $ h $ belongs to $ H \cap g_1 H g^{-1}_2 $.
And for $ h \in  H \cap g_1 H g^{-1}_2$, for $ y_h \neq 0 $, we have $ \t {Ad}_{\alpha^{-1}_{g_1 } (y_0) } \; \alpha^{-1}_{g_1} \; \alpha_h \; \alpha_{g_2} = \alpha_{g^{-1}_1 h g_2} $ where $ y_0 = \displaystyle \frac{y_h}{\norm {y_h}} $.
This implies $  \t {Ad}_{y_0} \; \alpha_h \; \alpha_{g_2} = \alpha_{g_1} \; \alpha_{g^{-1}_1 h g_2} $, equivalently, $  \t {Ad}_{y_0 u(h ,g_2) } = {Ad}_{u(g_1 , g^{-1}_1 h g_2) } $.
Hence, $ y_h \in \C \{u(g_1 , g^{-1}_1 h g_2) u^*(h ,g_2)\}$.
Thus, the set
\[
\left\{ \left(u (g_1 , h^{-1}_2 ) u^* (h^{-1}_1 ,g_2)\right) \; h^{-1}_1 : h_1, h_2 \in H \t { such that } h_1 g_1 = g_2 h_2 \right\} 
\]
forms a basis of the vector space $ V:= \left\{ y \in Q \rtimes H : \alpha_{g_1} (n) y = y \alpha_{g_2} (n) \t { for all } n \in N \right\} $.
To show that the set $ B_{g_1,g_2} $ forms a basis for $ Hg_1 H = H g_2 H $, we need the following explicit isomorphism:
\[
V \ni y \os {\pi} \longmapsto \pi(y) := J y^* J \in  \mcal C_{NN} (X_{g_1} , X_{g_2})
\]
where $ J $ is the canonical anti-unitary of $ L^2 (Q) $.
Set $ y_{h_1,h_2} : = \left(u (g_1 , h^{-1}_2 ) u^* (h^{-1}_1 ,g_2)\right) \; h^{-1}_1 $ for $ h_1 g_1 = g_2 h_2 $.
Then,
\[
\pi (y_{h_1,h_2}) [x]_{g_1} = \left[ \alpha_{h_1} \left( x u (g_1 , h^{-1}_2 ) u^* (h^{-1}_1 ,g_2 ) \right) \right]_{g_2} = \left[ \alpha_{h_1} \left( x \right) \alpha_{h_1} \left(u (g_1 , h^{-1}_2 ) u^* (h^{-1}_1 ,g_2 ) \right) \right]_{g_2}.
\]
We simplify $ \alpha_{h_1} \left(u (g_1 , h^{-1}_2 ) u^* (h^{-1}_1 ,g_2 ) \right) $ using Equations \ref{uomega}, \ref{ucond} and \ref{GL} to get
\begin{align*}
& \; \left\{ \ol \omega (h_1,g_1 , h^{-1}_2) u(h_1,g_1) u (h_1g_1 ,h^{-1}_2) u^*(h_1,g_1 h^{-1}_2)\right\} \; u(h_1 , h^{-1}_1 g_2) \\
= & \; \ol \omega (h_1,g_1 , h^{-1}_2) \; u(h_1,g_1) u (g_2 h_2 ,h^{-1}_2) = \; \ol \omega (h_1,g_1 , h^{-1}_2) \; \left\{u(h_1,g_1) u^* (g_2, h_2)\right\}.
\end{align*}
Hence, $ \pi (y_{h_1,h_2}) $ is a unit scalar multiple of \raisebox{-1.5 em}{\1disc{$h_1 $}{$ g_1 $}{$ g_2 $}{$ h_2 $}}
corresponding to $ (h_1,h_2) $.
\end{proof}
\begin{rem}\label{duality}
The maps
\[
{}_N L^2(N)_N \ni \hat 1 \os {\displaystyle R_g} \longmapsto \us i \sum [u^*(g^{-1} , g) \alpha_{g^{-1}} (b_i) ]_{g^{-1}} \us N \otimes [b^*_i]_g \in X_{g^{-1}} \us N \otimes X_g
\]
\[
{}_N L^2(N)_N \ni \hat 1 \os {\displaystyle \ol R_g} \longmapsto \omega (g,g^{-1},g) \us i \sum [u^*(g, g^{-1}) \alpha_{g} (b_i) ]_{g} \us N \otimes [b^*_i]_{g^{-1}} \in X_{g} \us N \otimes X_{g^{-1}}
\]
are standard solutions to conjugate equations for duality of $ X_g $ where $ \{b_i\}_i $ is a basis for the subfactor $ N \subset Q $.
We will also need the $ * $ of these maps, namely
\[
X_{g^{-1}} \us N \otimes X_g \ni [x]_{g^{-1}} \us N \otimes [y]_g \os {\displaystyle R^*_g} \longmapsto E_N \left(x \alpha_{g^{-1}} (y) u(g^{-1} , g )\right) \in {}_N L^2(N)_N
\]
\[
 X_{g} \us N \otimes X_{g^{-1}} \ni [x]_g \us N \otimes [y]_{g^{-1}} \os {\displaystyle {\ol R}^*_g} \longmapsto \ol \omega (g, g^{-1}, g) \; E_N \left(x \alpha_{g} (y) u(g , g^{-1} )\right) \in {}_N L^2(N)_N
\]
\end{rem}
\begin{prop}\label{smbox*alg}$ { } $

(i) \hspace{1em} \raisebox{-2.7 em}
{\psfrag{1}{\reflectbox{$h_3 $}}
\psfrag{2}{$ g_2 $}
\psfrag{3}{$ g_3 $}
\psfrag{4}{$ h_4 $}
\psfrag{5}{\reflectbox{$h_1 $}}
\psfrag{6}{$ g_1 $}
\psfrag{7}{$ h_2 $}
\includegraphics[scale=0.2]{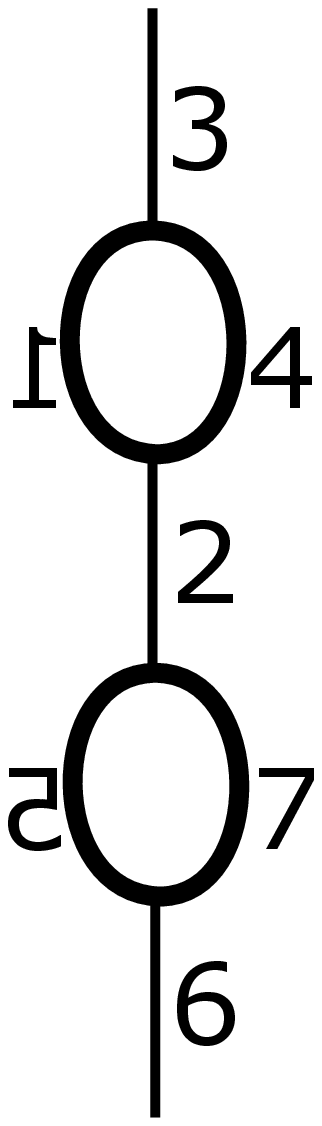}} \; $ := \; \raisebox{-1.5 em}{\1disc{$h_3 $}{$ g_2 $}{$ g_3 $}{$ h_4 $}} \; \circ \; \raisebox{-1.5 em}{\1disc{$h_1 $}{$ g_1 $}{$ g_2 $}{$ h_2 $}} \; = \; \ol \omega (h_3, h_1, g_1) \; \omega (h_3, g_2, h_2) \; \ol \omega (g_3 , h_4 , h_2)$ \;\;\;  \raisebox{-1.5 em}{\1disc{$ h_3 h_1 $}{$ g_1 $}{$ g_3 $}{$ h_4 h_2 $}}

(ii) $  \left[ \raisebox{-1.5 em}{\1disc{$\!\!\! h_1 $}{$ g_1 $}{$ g_2 $}{$ h_2 $}} \right]^* \; = \; \ol \omega (h_1, g_1, h^{-1}_2)  $
\; \raisebox{-1.5 em}{\1disc{$\!\!\!\!\!\! h^{-1}_1 $}{$ g_2 $}{$ g_1 $}{$ h^{-1}_2 $}}
\end{prop}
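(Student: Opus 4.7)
For part (i), I would apply the two disc maps in succession to $[x]_{g_1}$. The normalization $u(H\times H) = \{1\}$ from \ref{ucond} forces $\alpha_{h_3}\alpha_{h_1} = \alpha_{h_3 h_1}$ exactly, so the $x$-dependent factor collapses to $\alpha_{h_3 h_1}(x)$. What remains is to rewrite the product of four unitaries
\[
\alpha_{h_3}(u(h_1, g_1)) \; \alpha_{h_3}(u^*(g_2, h_2)) \; u(h_3, g_2) \; u^*(g_3, h_4)
\]
as a scalar times $u(h_3 h_1, g_1) \; u^*(g_3, h_4 h_2)$. Three applications of \ref{uomega} do the job: one to unfold $\alpha_{h_3}(u(h_1, g_1))$, one to unfold $\alpha_{h_3}(u^*(g_2, h_2))$, and one to identify $u(g_3, h_4)\,u(g_3 h_4, h_2)$ with $\omega(g_3, h_4, h_2)\,u(g_3, h_4 h_2)$. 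The normalizations $u(h_3, h_1) = u(h_4, h_2) = 1$ kill the ``purely-$H$'' unitaries, intermediate terms like $u^*(h_3, h_1 g_1)$ and $u(h_3, g_2 h_2)$ cancel once one uses $h_1 g_1 = g_2 h_2$ and $h_3 g_2 = g_3 h_4$, and the surviving $\omega$-factors collect to precisely $\ol\omega(h_3, h_1, g_1) \; \omega(h_3, g_2, h_2) \; \ol\omega(g_3, h_4, h_2)$.

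For part (ii), I would first observe that the disc $f$ labeled by $(h_1, g_1, g_2, h_2)$ is a unitary $N$-$N$ bimodule map: since $\alpha_{h_1}$ is trace-preserving and $u(h_1, g_1), u(g_2, h_2)$ are unitaries in $Q$, a direct calculation gives $\tau(f([x])^* f([x'])) = \tau(x^* x')$, and both $X_{g_1}, X_{g_2}$ are invertible bimodules of the same index. The condition $h_1 g_1 = g_2 h_2$ is equivalent to $h_1^{-1} g_2 = g_1 h_2^{-1}$, so Proposition \ref{smbox} tells us the disc $g$ labeled by $(h_1^{-1}, g_2, g_1, h_2^{-1})$ spans the one-dimensional space $\mcal C_{NN}(X_{g_2}, X_{g_1})$; hence $f^* = c \cdot g$ for a unique scalar $c$, which I pin down by computing $g \circ f$ via part (i) and requiring $g \circ f = c^{-1}\, \t{id}_{X_{g_1}}$.

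Applying part (i), the disc in $g \circ f$ carries labels $(e, g_1, g_1, e)$, which acts as $\t{id}_{X_{g_1}}$, and the accumulated scalar is $\ol\omega(h_1^{-1}, h_1, g_1) \; \omega(h_1^{-1}, g_2, h_2) \; \ol\omega(g_1, h_2^{-1}, h_2)$. The normalizations from Lemma \ref{gllemma} (relation \ref{GL}(i)) immediately give $\omega(h_1^{-1}, h_1, g_1) = 1 = \omega(g_1, h_2^{-1}, h_2)$. For the middle factor, one use of \ref{GL}(iv) followed by \ref{GL}(ii), combined with the relation $h_1^{-1} g_2 = g_1 h_2^{-1}$, yields $\omega(h_1^{-1}, g_2, h_2) = \omega(h_1, g_1, h_2^{-1})$. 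Therefore $c = \ol\omega(h_1, g_1, h_2^{-1})$, as claimed. The main obstacle in both parts is purely cocycle bookkeeping; once the right sequence of \ref{uomega} and the normalizations in \ref{GL} are applied, everything unwinds mechanically.
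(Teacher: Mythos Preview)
Your proposal is correct and follows essentially the same route as the paper: for (i) you compose the two maps on $[x]_{g_1}$ and reduce the unitary product via three applications of \ref{uomega} together with $u|_{H\times H}=1$, exactly as the paper does; for (ii) you use unitarity of the disc and compute the composite with the $(h_1^{-1},g_2,g_1,h_2^{-1})$-disc via part (i), then invoke \ref{GL} to identify the scalar. Your write-up is in fact more explicit than the paper's about which relations in \ref{GL} are used to convert $\omega(h_1^{-1},g_2,h_2)$ into $\omega(h_1,g_1,h_2^{-1})$, which is a nice touch.
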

\begin{proof}
(i) The left side is given by $ [x]_{g_1} \mapsto \left[\alpha_{h_3 h_1} (x) \; \alpha_{h_3} (u(h_1,g_1) u^*(g_2,h_2)) \;  u(h_3,g_2)u^*(g_3,h_4)\right]_{g_3}$.
Observe that
\begin{align*}
& \; \alpha_{h_3} (u(h_1,g_1) u^*(g_2,h_2)) \;  u(h_3,g_2)u^*(g_3,h_4)\\
= & \; \ol \omega (h_3, h_1,g_1) \; u(h_3 h_1,g_1) \; u^*(h_3 , \us {=g_2 h_2} {h_1 g_1}) \;  \alpha_{h_3} (u^*(g_2,h_2)) \;  u(h_3,g_2) \; u^*(g_3,h_4)\\
& \; \t {(applying \ref{uomega} and \ref{ucond} on the first term)}\\
= & \; \ol \omega (h_3, h_1,g_1) \; \omega (h_3,g_2,h_2) \; u(h_3 h_1,g_1) \; u^*(\us {=g_3h_4}{h_3 g_2} , h_2) \; u^*(g_3,h_4)\\
& \; \t {(applying \ref{uomega} on the second, third and fourth terms)}\\
= & \; \ol \omega (h_3, h_1,g_1) \; \omega (h_3,g_2,h_2) \; \ol \omega (g_3, h_4,h_2) \; u(h_3 h_1 , g_1) \; u^*(g_3,h_4h_2)\\
& \; \t {(applying \ref{uomega} and \ref{ucond} on the last two terms)}
\end{align*}
which gives the required result.

(ii) Note that \raisebox{-1.4 em}{\1disc{$\!\!\! h_1 $}{$ g_1 $}{$ g_2 $}{$ h_2 $}} is a unitary which follows right from its definition.
Using part (i), one can easily show that \; \raisebox{-1.4 em}{\1disc{$\!\!\!\!\!\! h^{-1}_1 $}{$ g_2 $}{$ g_1 $}{$ h^{-1}_2 $}} \; is indeed the inverse of \raisebox{-1.4 em}{\1disc{$\!\!\! h_1 $}{$ g_1 $}{$ g_2 $}{$ h_2 $}} where one uses the relations in \ref{GL}.
\end{proof}
Next, we will prove some facts about tensor product of two elements from $ \Lambda $.
For $ g_1,g_2 \in G$ and $ h\in H $, we define \raisebox{-1.5em}{\tower{$ g_1 $}{$ g_2 $}{$ g_1hg_2 $}{$ h $}}  $ : X_{g_1} \us N \otimes X_{g_2} \ra X_{g_1hg_2}$ in the following way
\[
X_{g_1} \us N \otimes X_{g_2} \ni [x]_{g_1} \us N \otimes [y]_{g_2} \longmapsto \abs {H}^{-\frac 1 2} \left[ x \; \alpha_{g_1} (\alpha_h (y)) \; u(g_1 ,h) \; u(g_1 h, g_2) \right]_{g_1h g_2} \in X_{g_1hg_2}.
\]
\begin{rem}
With standard inner product computation, one can show that
\begin{align*}
\left( \raisebox{-1.5em}{\tower{$ g_1 $}{$ g_2 $}{$ g_1hg_2 $}{$ h $}}  \right)^* : [z]_{g_1hg_2} \longmapsto & \; \abs{H}^{- \frac 1 2} \us {i} \sum [z u^*(g_1h ,g_2) u^* (g_1 , h) \alpha_{g_1} (b_i)]_{g_1} \us N \otimes [\alpha_{h^{-1}} (b^*_i)]_{g_2}\\
= & \; \abs{H}^{- \frac 1 2} \us {i} \sum [\alpha_{g_1} (b_i)]_{g_1} \us N \otimes [\alpha_{h^{-1}} \left(b^*_i \alpha^{-1}_{g_1} (z u^*(g_1h ,g_2) u^* (g_1 , h) ) \right) ]_{g_2}
\end{align*}
where $ \{b_i\}_i $ is \textbf{any} basis of $ Q $ over $ N $.
\end{rem}
\noindent To see this, consider $ \left\lab \raisebox{-1.5em}{\tower{$ g_1 $}{$ g_2 $}{$ g_1hg_2 $}{$ h $}}  \left([x]_{g_1} \us N \otimes [y]_{g_2} \right) \; , \; [z]_{g_1hg_2} \right \rab $
\begin{align*}
&= \abs {H}^{- \frac 1 2} \t {tr} \left( x \; \alpha_{g_1} (\alpha_h (y)) \; u(g_1 ,h) \; u(g_1 h, g_2) \; z^* \right)\\
&= \abs {H}^{- \frac 1 2} \us i \sum \t {tr} \left( x \; \alpha_{g_1} \left( E_N (y \alpha_{h^{-1}} ( b_i) ) b^*_i \right) \; u(g_1 ,h) \; u(g_1 h, g_2) \; z^* \right)\\
& = \abs {H}^{- \frac 1 2} \us i \sum \t {tr} \left( x \; \alpha_{g_1} \left( E_N (y \alpha_{h^{-1}} ( b_i) ) \right) \; \left(z \; u^*(g_1 h, g_2) \; u^*(g_1 ,h) \; \alpha_{g_1} (b_i) \right)^* \right)\\
& = \abs {H}^{- \frac 1 2} \us i \sum  \left\lab [x]_{g_1} \;  _N \left\lab [y]_{g_2} , [\alpha_{h^{-1}} ( b^*_i)]_{g_2} \right\rab  \; , \; \left[z \; u^*(g_1 h, g_2) \; u^*(g_1 ,h) \; \alpha_{g_1} (b_i) \right]_{g_1} \right\rab
\end{align*}
$= \left\lab [x]_{g_1} \us N \otimes [y]_{g_2} \; , \; \left(\raisebox{-1.5em}{\tower{$ g_1 $}{$ g_2 $}{$ g_1hg_2 $}{$ h $}}  \right)^* [z]_{g_1hg_2} \right \rab $.
We will denote $\left( \raisebox{-1.5em}{\tower{$ g_1 $}{$ g_2 $}{$ g_1hg_2 $}{$ h $}} \right)^*$ by \raisebox{-1.5em}{\wine{$ g_1 $}{$ g_2 $}{$ g_1hg_2 $}{$ h $}}.
It is straightforward to check \raisebox{-1.5em}{\wine{$ g_1 $}{$ g_2 $}{$ g_1hg_2 $}{$ h $}} preserves inner product and thereby is an isometry.
So, the element
\;\;\raisebox{-3em}{
	\psfrag{1}{\reflectbox{$ g_1 $}}
	\psfrag{2}{$ g_2 $}
	\psfrag{3}{$g_1 h g_2 $}
	\psfrag{4}{$ h $}
	\psfrag{5}{\reflectbox{$ g_1 $}}
	\psfrag{6}{$ g_2 $}
	\psfrag{7}{$ h $}
	\includegraphics[scale=0.2]{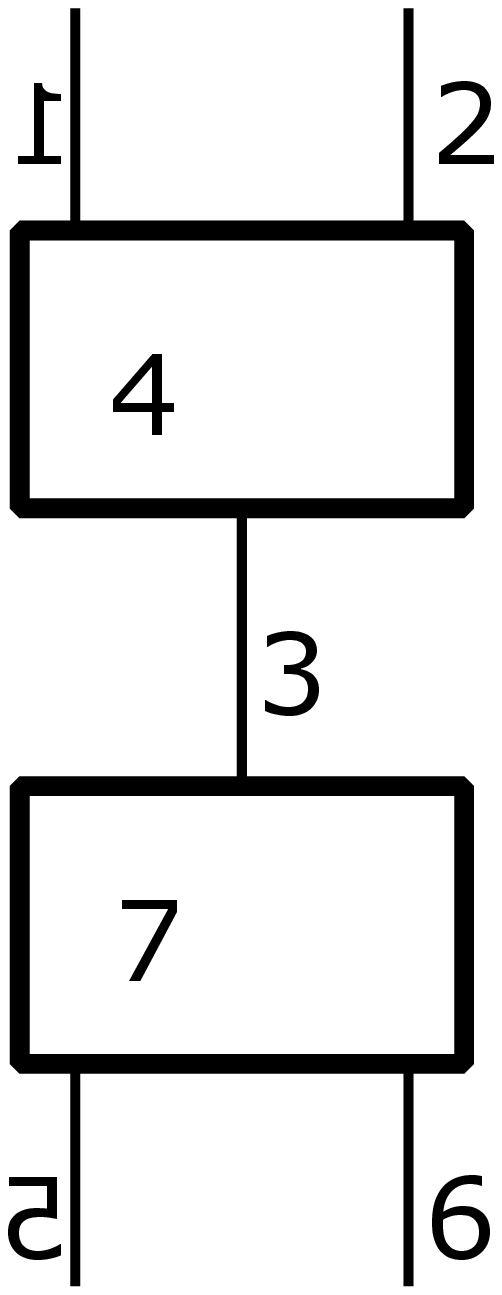}
	}\hspace{1em}
$ := \left( \raisebox{-1.5em}{\wine{$ g_1 $}{$ g_2 $}{$ g_1hg_2 $}{$ h $}} \; \circ \; \raisebox{-1.5em}{\tower{$ g_1 $}{$ g_2 $}{$ g_1hg_2 $}{$ h $}} \; \right) $ is a projection in $ \t {End } (X_{g_1} \us N \otimes X_{g_2}) $ for every $ h\in H $.
\begin{prop}\label{resid}
The set $ \left\{ \raisebox{-1.5em}{\tower{$ g_1 $}{$ g_2 $}{$ g_1hg_2 $}{$ h $}} : h \in H \right\} $ gives a resolution of the identity in $ \t {End } (X_{g_1} \us N \otimes X_{g_2}) $.
\end{prop}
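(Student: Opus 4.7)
The plan is to prove the stronger identity
\[
\sum_{h \in H} \left( \raisebox{-1.5em}{\wine{$g_1$}{$g_2$}{$g_1hg_2$}{$h$}} \circ \raisebox{-1.5em}{\tower{$g_1$}{$g_2$}{$g_1hg_2$}{$h$}} \right) \;=\; \t{id}_{X_{g_1} \us N \otimes X_{g_2}},
\]
which suffices: each summand is already known to be a projection, and any family of projections summing to the identity is automatically mutually orthogonal, so this gives the asserted resolution.

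My strategy is a direct computation on a simple tensor $[x]_{g_1} \us N \otimes [y]_{g_2}$. First I would apply $\t{tower}_h$ by its definition, producing $\abs{H}^{-1/2} [x\,\alpha_{g_1}(\alpha_h(y))\, u(g_1,h)\, u(g_1 h, g_2)]_{g_1 h g_2}$. Feeding this into the second (equivalent) formula for $(\t{tower}_h)^*$ recorded in the remark, whose integrand contains the inverse pair $u^*(g_1 h, g_2)\, u^*(g_1,h)$, the cocycle unitaries cancel cleanly; after one $\alpha_{g_1}^{-1}$ followed by one $\alpha_{h^{-1}}$, the $y$ reappears on the right slot, and the composition reduces to
\[
(\t{tower}_h)^* \circ \t{tower}_h \; ([x]_{g_1} \us N \otimes [y]_{g_2}) \;=\; \frac{1}{\abs{H}} \sum_i [\alpha_{g_1}(b_i)]_{g_1} \us N \otimes [\alpha_{h^{-1}}(b_i^* \alpha_{g_1}^{-1}(x))\, y]_{g_2},
\]
where $\{b_i\}$ is the chosen Pimsner--Popa basis for $N \subset Q$. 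Notably no $\omega$-term survives.

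Next I would sum over $h \in H$. The key point is that $\frac{1}{\abs{H}} \sum_h \alpha_{h^{-1}}$ equals the canonical conditional expectation $E_N : Q \to Q^H = N$ (using the bijection $h \mapsto h^{-1}$). Thus the sum collapses to $\sum_i [\alpha_{g_1}(b_i)]_{g_1} \us N \otimes [E_N(b_i^* \alpha_{g_1}^{-1}(x))\, y]_{g_2}$. Since $E_N(b_i^* \alpha_{g_1}^{-1}(x)) \in N$, I can slide it across the tensor using the $\alpha_{g_1}$-twisted right $N$-action on $X_{g_1}$, landing at $\sum_i [\alpha_{g_1}(b_i\, E_N(b_i^* \alpha_{g_1}^{-1}(x)))]_{g_1} \us N \otimes [y]_{g_2}$. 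The Pimsner--Popa reconstruction identity $\sum_i b_i E_N(b_i^* q) = q$, applied with $q = \alpha_{g_1}^{-1}(x)$, then recovers $[x]_{g_1} \us N \otimes [y]_{g_2}$ exactly.

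The only place demanding care is the bookkeeping around the twisted right $N$-action on $X_{g_1}$: one must remember that scalars from $N$ acquire a factor of $\alpha_{g_1}$ when passing through its right slot, and that $E_N$ for the fixed-point subfactor $N \subset Q$ is precisely the averaging $\frac{1}{\abs H} \sum_h \alpha_h$ that appears. Otherwise the argument is routine, and the fact that the $3$-cocycle $\omega$ plays no role at all is a reassuring sanity check, since the statement is purely about an identity of endomorphisms that should persist irrespective of any cocycle twist.
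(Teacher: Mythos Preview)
Your proof is correct and follows essentially the same route as the paper's own argument: both compute $\sum_h (\t{tower}_h)^* \circ \t{tower}_h$ on a simple tensor, cancel the $u$-terms, recognize the $h$-average as $E_N$, and finish with the Pimsner--Popa basis identity. You have merely made explicit the last two steps (sliding $E_N(\cdot)$ across the tensor and invoking $\sum_i b_i E_N(b_i^* q)=q$) that the paper leaves implicit.
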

\begin{proof}
It is enough to check  $\displaystyle \us {h\in H} \sum  \raisebox{-3em}{
	\psfrag{1}{\reflectbox{$ g_1 $}}
	\psfrag{2}{$ g_2 $}
	\psfrag{3}{$g_1 h g_2 $}
	\psfrag{4}{$ h $}
	\psfrag{5}{\reflectbox{$ g_1 $}}
	\psfrag{6}{$ g_2 $}
	\psfrag{7}{$ h $}
	\includegraphics[scale=0.2]{figures/cnn/winetower.eps}
} \hspace{1em} = \; \t {id}_{X_{g_1} \us N \otimes X_{g_2}}$.
The left side acting on $ [x]_{g_1} \us N \otimes [y]_{g_2} $ gives
\begin{align*}
= & \; \abs{H}^{- 1} \us {i,h} \sum [\alpha_{g_1} (b_i)]_{g_1} \us N \otimes [\alpha_{h^{-1}} \left(b^*_i \alpha^{-1}_{g_1} ( x \; \alpha_{g_1} (\alpha_h (y)) ) \right) ]_{g_2}\\
= & \; \abs{H}^{- 1} \us {i,h} \sum [\alpha_{g_1} (b_i)]_{g_1} \us N \otimes [\alpha_{h^{-1}} \left(b^*_i \alpha^{-1}_{g_1} ( x  )  \right) \; y]_{g_2} =  \us {i} \sum [\alpha_{g_1} (b_i)]_{g_1} \us N \otimes [E_N \left(b^*_i \alpha^{-1}_{g_1} ( x  )  \right) \; y]_{g_2} =  [x]_{g_1} \us N \otimes [y]_{g_2}.
\end{align*}
\end{proof}
\begin{rem}\label{hexagon}
From Propositions \ref{smbox} and \ref{resid}, we may conclude that  $ \mcal C_{NN} (X_{g_1} \us N \otimes X_{g_2} , X_{g_3} \us N \otimes X_{g_4}) $ is linearly spanned by the (linearly independent) set
\[
\left\{ \left. \raisebox{-4.4em}{\hexagon{$ g_1 $}{$ g_2 $}{$h_3 $}{$ g_3 $}{$ g_4 $}{$ h_4 $}{$ h_1 $}{$h_2  $}{\psfrag{9}{}\psfrag{0}{}}}
\; := \; \raisebox{-1.5em}{\wine{$ g_3 $}{$ g_4 $}{$ g_3 h_4 g_4 $}{$ h_4 $}} \; \circ \; \raisebox{-1.5 em}{\1disc{$h_1 $}{$ g_1 h_3 g_2 $}{$ g_3 h_4 g_4 $}{$ h_2 $}} \hspace{1em} \circ \raisebox{-1.5em}{\tower{$ g_1 $}{$ g_2 $}{$ g_1h_3g_2 $}{$ h_3 $}} \hspace{1em} \right| h_1,h_2,h_3 ,h_4 \in H \right\}.
\]
\end{rem}
\noindent We will now prove two lemmas which will be very useful in finding the structure the annular algebra.
As for notations, we will use the standard graphical representations of morphism where composition will be represented by stacking the morphisms vertically with the left most being in the top.
\begin{lem}\label{tribox}
\begin{align*}
	\t{(i)} \hspace{2em}& \raisebox{-3em}{\psfrag{1}{\reflectbox{$ g_1 $}}
		\psfrag{2}{$ s $}
		\psfrag{3}{$ g_1 h  s $}
		\psfrag{4}{$ h $}
		\psfrag{5}{\reflectbox{$ h_1 $}}
		\psfrag{6}{$ h_2 $}
		\psfrag{7}{$ t $}
		\includegraphics[scale=0.2]{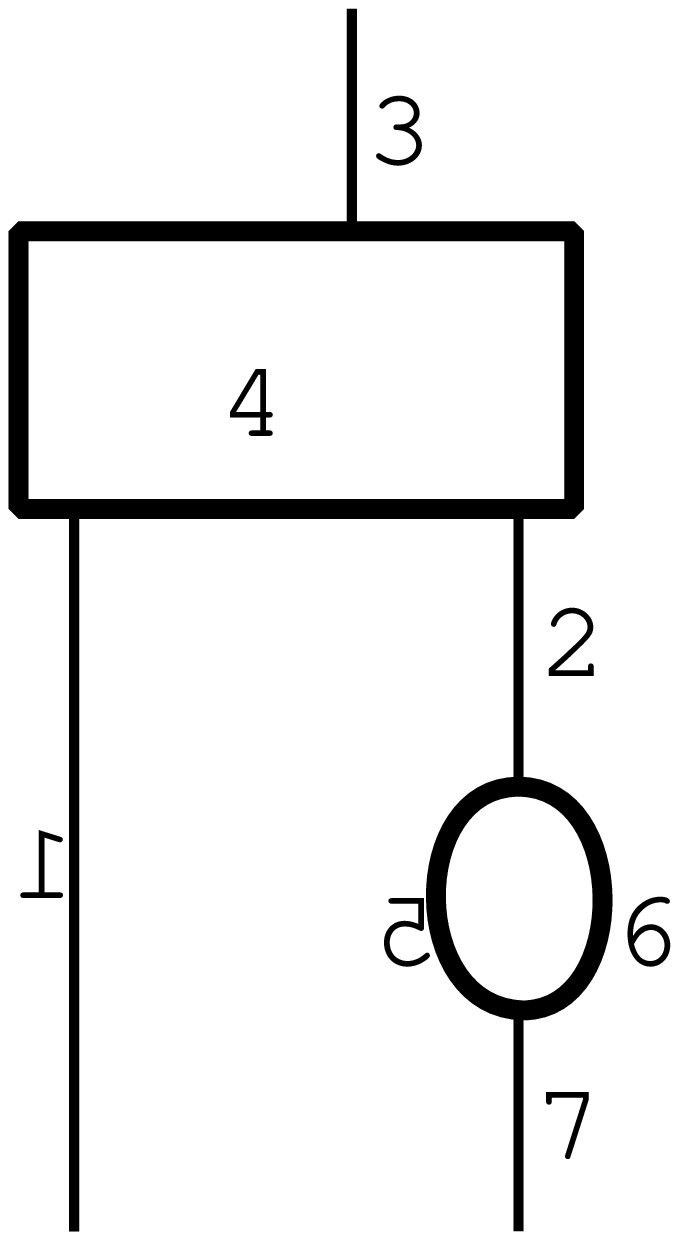}}
\hspace{1em} =  \left[ \omega (g_1 h , s , h_2) \; \ol \omega (g_1 h , h_1 ,t) \; \omega (g_1 ,  h,h_1) \right] \; \raisebox{-3em}{
	\psfrag{1}{\reflectbox{$ g_1 $}}
	\psfrag{2}{$ t $}
	\psfrag{3}{$ g_1 hh_1 t $}
	\psfrag{4}{$ h h_1 $}
	\psfrag{5}{\reflectbox{$ e \;$}}
	\psfrag{6}{$ h_2 $}
	\psfrag{7}{$ g_1 h s $}
	\includegraphics[scale=0.2]{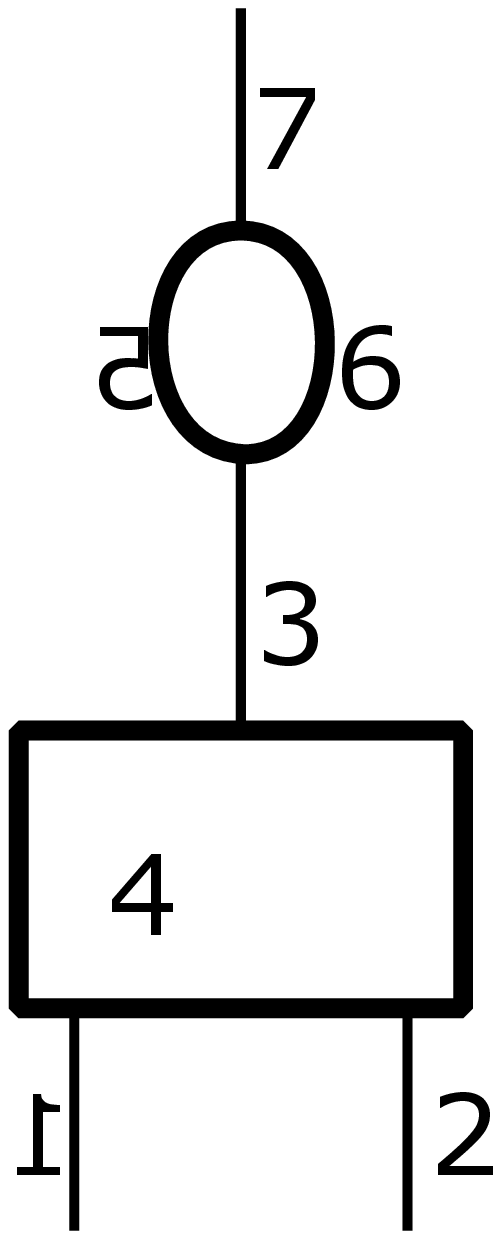}}
\\
\t{(ii)} \hspace{2em} &\raisebox{-3em}{
	\psfrag{1}{\reflectbox{$ s $}}
	\psfrag{2}{$ g_2 $}
	\psfrag{3}{$ s h g_2 $}
	\psfrag{4}{$ h $}
	\psfrag{5}{\reflectbox{$ h_1 $}}
	\psfrag{6}{$ h_2 $}
	\psfrag{7}{$ t $}
	\includegraphics[scale=0.2]{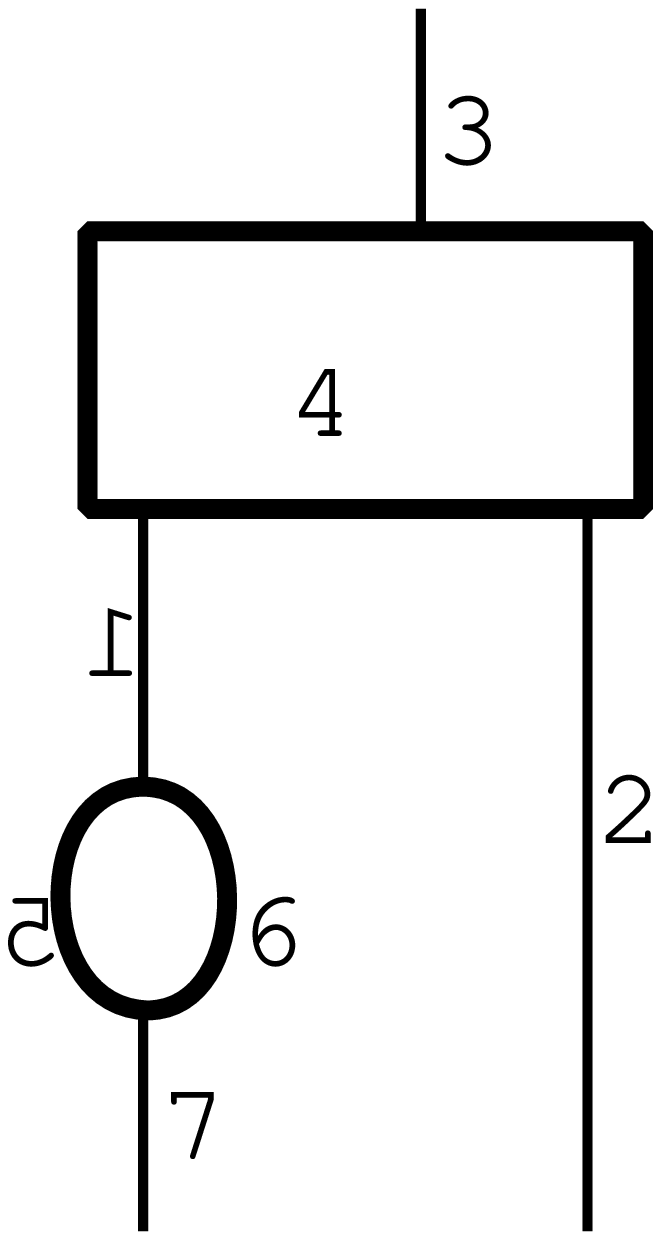}}
\hspace{1em} = \left[ \omega (h_1 , t h^{-1}_2 h , g_2)\; \ol \omega (s, h_2 , h^{-1}_2 h) \; \omega (h_1 , t , h^{-1}_2 h) \right] \; \raisebox{-3em}{
	\psfrag{1}{\reflectbox{$ t $}}
	\psfrag{2}{$ g_2 $}
	\psfrag{3}{$ t h^{-1}_2 h  g_2 $}
	\psfrag{4}{\!\!\!$ h^{-1}_2 h $}
	\psfrag{5}{\reflectbox{$ h_1$}}
	\psfrag{6}{$ e $}
	\psfrag{7}{$s h g_2 $}
	\includegraphics[scale=0.2]{figures/cnn/triboxirhs.eps}}
\end{align*}
\end{lem}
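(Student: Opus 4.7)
Both identities are equalities of $N$-$N$ bimodule morphisms with the same source and target, so the natural approach is to evaluate the two sides on a generic simple tensor and compare coefficients. The building blocks on each side are the tower morphism (defined just before Proposition~\ref{resid}), its adjoint wine (given in the remark immediately after), and the disc morphisms of Proposition~\ref{smbox}. Each of these acts on a simple tensor by producing a single output element whose coefficient is an explicit word in the unitaries $u(\cdot,\cdot)$ and automorphisms $\alpha_\cdot$.

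For part~(i), I would evaluate the LHS on $[x]_{g_1}\us N\otimes [y]_s$. The tower step converts this (up to the normalization $|H|^{-1/2}$) into an element of $X_{g_1 h s}$ whose coefficient involves $u(g_1,h)\,u(g_1 h, s)$, and the subsequent disc $(h_1,h_2)$ multiplies this by $u(h_1, g_1 h s)\,u^*(t, h_2)$ after applying $\alpha_{h_1}$. I would carry out the same evaluation for the RHS. Since both outputs land in the same target bimodule, it suffices to compare the accumulated $u$-coefficients. Using Equation~\ref{uomega} repeatedly to rebracket products of the form $u(a,b)\,u(ab,c)$, each regrouping contributes an $\omega$-scalar; after exploiting the normalizations \ref{GL} and $u|_{H\times H}\equiv 1$ from \ref{ucond}, the ratio of the two coefficients should collapse to exactly $\omega(g_1 h, s, h_2)\,\ol\omega(g_1 h, h_1, t)\,\omega(g_1, h, h_1)$.

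Part~(ii) is handled by the same procedure, now with the tower acting on the right $X_{g_2}$ factor of $[x]_s\us N\otimes [y]_{g_2}$; the different arrangement of the scalar on the RHS simply reflects that the cocycle identity \ref{3coc} is invoked with the arguments $(h_1, t h_2^{-1} h, g_2)$, $(s, h_2, h_2^{-1}h)$, and $(h_1, t, h_2^{-1}h)$, which are what arise when the unitaries $u(h_1,t)$, $u(s,h)$, $u(\cdots,g_2)$ are successively combined. The structural symmetry between (i) and (ii) — moving the disc past the tower on the left versus on the right — makes it most efficient to carry out (i) in detail and then indicate the symmetric modification for (ii).

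The main obstacle is cocycle bookkeeping: each composition generates a product of five or six unitary factors, and reducing it to a single $u$ requires several successive applications of \ref{uomega}, each producing an $\omega$-scalar. Without the normalizations \ref{GL} (valid precisely because $h, h_1, h_2 \in H$, so any $\omega$ involving a factor of the form $l l^{-1}$ with $l\in H$ trivializes), many spurious $\omega$-terms would survive and prevent a clean match with the stated scalar; with them in place the reductions become mechanical, and the argument reduces to a careful but routine triangulation of four cocycle moves per identity.
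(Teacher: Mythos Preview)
Your plan is exactly the paper's approach: evaluate both sides on a simple tensor, expand the resulting words in $u(\cdot,\cdot)$ and $\alpha_\cdot$, and reduce via successive applications of Equation~\ref{uomega} (each one spitting out one of the three $\omega$-factors). One small misreading to fix before you execute: in part~(i) the common source of both sides is $X_{g_1}\us N\otimes X_t$, not $X_{g_1}\us N\otimes X_s$. On the LHS the disc acts \emph{first} on the second tensor factor (sending $[y]_t\mapsto[\alpha_{h_1}(y)u(h_1,t)u^*(s,h_2)]_s$) and the tower then combines $X_{g_1}\us N\otimes X_s\to X_{g_1hs}$; on the RHS the tower (with parameter $hh_1$) comes first and the disc $(e,h_2):X_{g_1hh_1t}\to X_{g_1hs}$ last. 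With that order straightened out, the three applications of \ref{uomega} you describe line up precisely with the paper's computation, and part~(ii) is handled symmetrically on $[x]_t\us N\otimes[y]_{g_2}$.
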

\begin{proof}
	(i) The left side acts on $ [x]_{g_1} \us N \otimes [y]_t $, gives
	\begin{align*}
		\abs{H}^{- \frac 1 2} \left[x \alpha_{g_1} \left(\alpha_h \left(\alpha_{h_1} (y) u(h_1 ,t) u^*(s,h_2)\right)\right) u(g_1,h) u(g_1 h , s) \right]_{g_1h s}
	\end{align*}
	whereas the right side yields
	\[
	\abs{H}^{- \frac 1 2} \left[x \alpha_{g_1} \left(\alpha_{h h_1} (y) \right) u(g_1 , hh_1) u(g_1 h h_1, t)  u^*(g_1 h s , h_2) \right]_{g_1h s}.
	\]
	After striking out the similar terms, we will be left with
	\begin{align*}
		& \; \alpha_{g_1} \left(\alpha_h \left( u(h_1 ,t) u^*(s,h_2)\right)\right) u(g_1,h) u(g_1 h , s)\\
		= & \; u(g_1,h) \alpha_{g_1 h} \left( u(h_1 ,t) u^*(s,h_2)\right)  u(g_1 h , s)\\
		= & \;  u(g_1,h) \alpha_{g_1 h} \left( u(h_1 ,t) \right) \omega (g_1 h , s , h_2) u(g_1h, \us {=h_1 t} {s h_2}) u^* (g_1 h s ,h_2) \\
		= & \; \omega (g_1 h , s , h_2) u(g_1,h) \ol \omega (g_1 h , h_1 ,t) u(g_1 h, h_1) u(g_1h h_1 , t) u^* (g_1 h s ,h_2) \\
		= & \; \left[\omega (g_1 h , s , h_2) \ol \omega (g_1 h , h_1 ,t) \omega (g_1 ,  h,h_1)\right] \; \left( u(g_1 , hh_1) u(g_1 h h_1, t)  u^*(g_1 h s , h_2) \right)
	\end{align*}
	
	(ii) The action of left side on $ [x]_t \us N \otimes [y]_{g_2} $ is
	\begin{align*}
		& \; \abs H^{- \frac 1 2} \left[\alpha_{h_1} (x) u(h_1 , t) u^* (s ,h_2) \; \alpha_s \left( \alpha_h (y)  \right) \; u(s,h) u(sh,g_2)\right]_{shg_2}\\
		= & \; \abs H^{- \frac 1 2} \left[\alpha_{h_1} \left(\; x \; \alpha_t (\alpha_{h^{-1}_2 h} (y) \;  \right) u(h_1 , t) u^* (s ,h_2) \;  u(s,h) u(sh,g_2) \right]_{shg_2}
	\end{align*}
	and the right side on the same is
	\begin{align*}
		& \; \abs H^{- \frac 1 2}  \left[\alpha_{h_1} \left(\; x \; \alpha_t (\alpha_{h^{-1}_2 h} (y) \; u(t, h^{-1}_2 h) u(t h^{-1}_2 h, g_2)  \right) u(h_1 , t h^{-1}_2 h g_2) \right]_{shg_2}\\
		= & \; \abs H^{- \frac 1 2}  \left[\alpha_{h_1} \left(\; x \; \alpha_t (\alpha_{h^{-1}_2 h} (y) \right) \; \alpha_{h_1} (u(t, h^{-1}_2 h) ) \; \ol \omega (h_1 , t h^{-1}_2 h , g_2) u (h_1 , t h^{-1}_2 h ) u (h_1  t h^{-1}_2 h , g_2) \right]_{shg_2}\\
		= & \; \ol \omega (h_1 , t h^{-1}_2 h , g_2) \abs H^{- \frac 1 2}  \left[\alpha_{h_1} \left(\; x \; \alpha_t (\alpha_{h^{-1}_2 h} (y) \right) \ol \omega (h_1 , t , h^{-1}_2 h) u (h_1 , t ) u (\us {=s h_2}{h_1  t} , h^{-1}_2 h) \;  u (s h , g_2) \right]_{shg_2}\\
		= & \; \ol \omega (h_1 , t h^{-1}_2 h , g_2)  \ol \omega (h_1 , t , h^{-1}_2 h) \abs H^{- \frac 1 2}\\
		& \;\left[\alpha_{h_1} \left(\; x \; \alpha_t (\alpha_{h^{-1}_2 h} (y) \right) u (h_1 , t ) \; \omega (s, h_2 , h^{-1}_2 h) u^* (s ,h_2) \;  u(s,h) \;  u (s h , g_2) \right]_{shg_2}.
	\end{align*}
\end{proof}
\begin{lem}\label{2tri}$ $\\
(i) \; \raisebox{-3em}{
	\psfrag{1}{\reflectbox{$ g_1 $}}
	\psfrag{2}{$ g_2 $}
	\psfrag{3}{$ g_3 $}
	\psfrag{4}{$ h_1 $}
	\psfrag{5}{$ h_2$}
	\psfrag{6}{\reflectbox{$ g_1 h_1 g_2 $}}
	\psfrag{7}{$g_2 h_2 g_3 $}
	\includegraphics[scale=0.2]{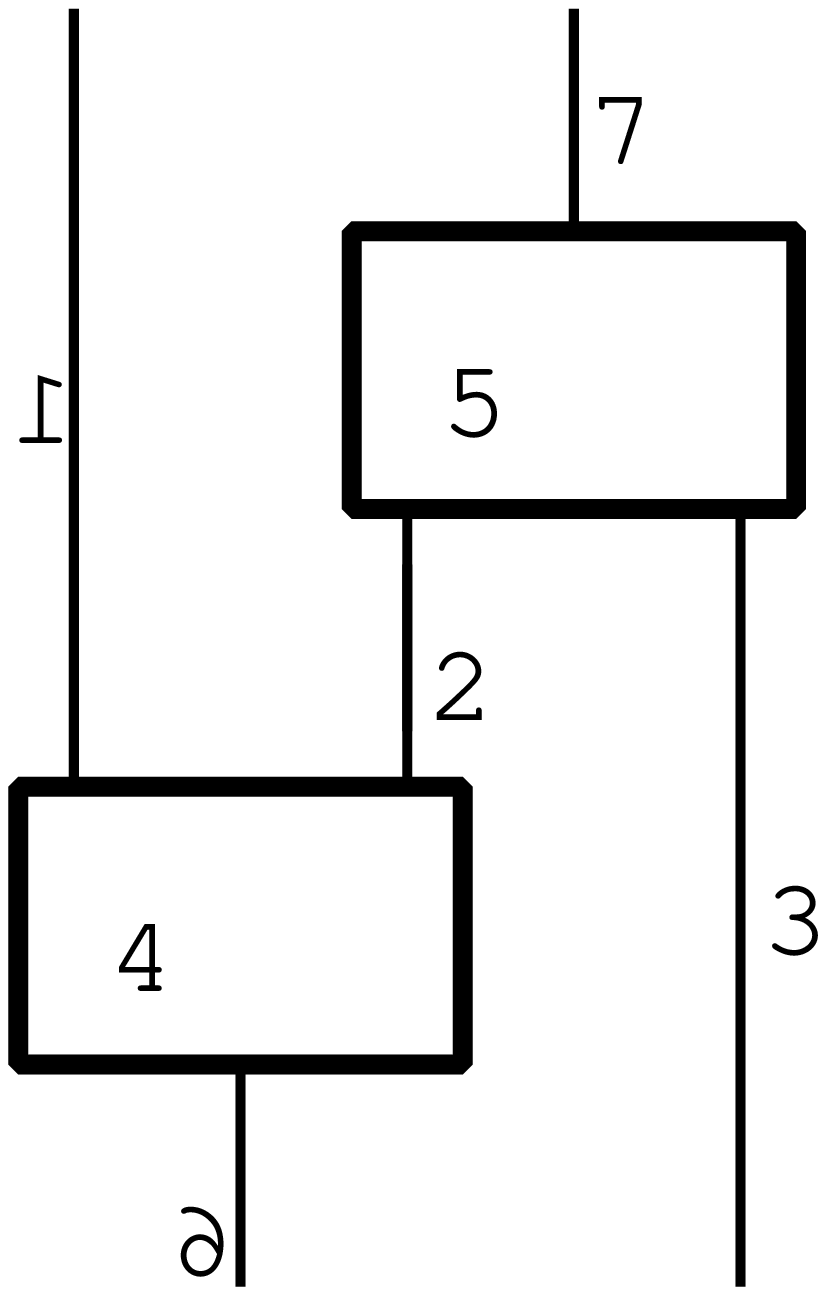}}
\; $ = \left[\ol \omega(g_1h_1, g_2 , h_2) \; \ol \omega (g_1h_1 ,g_2 h_2, g_3)\right] \hspace{1em} \raisebox{-3em}{
	\psfrag{1}{\reflectbox{$ g_1 $}}
	\psfrag{2}{$ g_2 h_2 g_3 $}
	\psfrag{3}{$g_1 h_1 g_2 h_2 g_3 $}
	\psfrag{4}{$ h_1 $}
	\psfrag{5}{\reflectbox{$ g_1 h_1g_2 $}}
	\psfrag{6}{$ g_3 $}
	\psfrag{7}{$ h_2 $}
	\includegraphics[scale=0.2]{figures/cnn/winetower.eps}
} $\\
	
\noindent(ii) \; \raisebox{-3em}{
	\psfrag{1}{\reflectbox{$ g_1 $}}
	\psfrag{2}{$ g_2 $}
	\psfrag{3}{$ g_3 $}
	\psfrag{4}{$ h_1 $}
	\psfrag{5}{$ h_2$}
	\psfrag{6}{\reflectbox{$ g_1 h_1 g_2 $}}
	\psfrag{7}{$g_2 h_2 g_3 $}
	\includegraphics[scale=0.2]{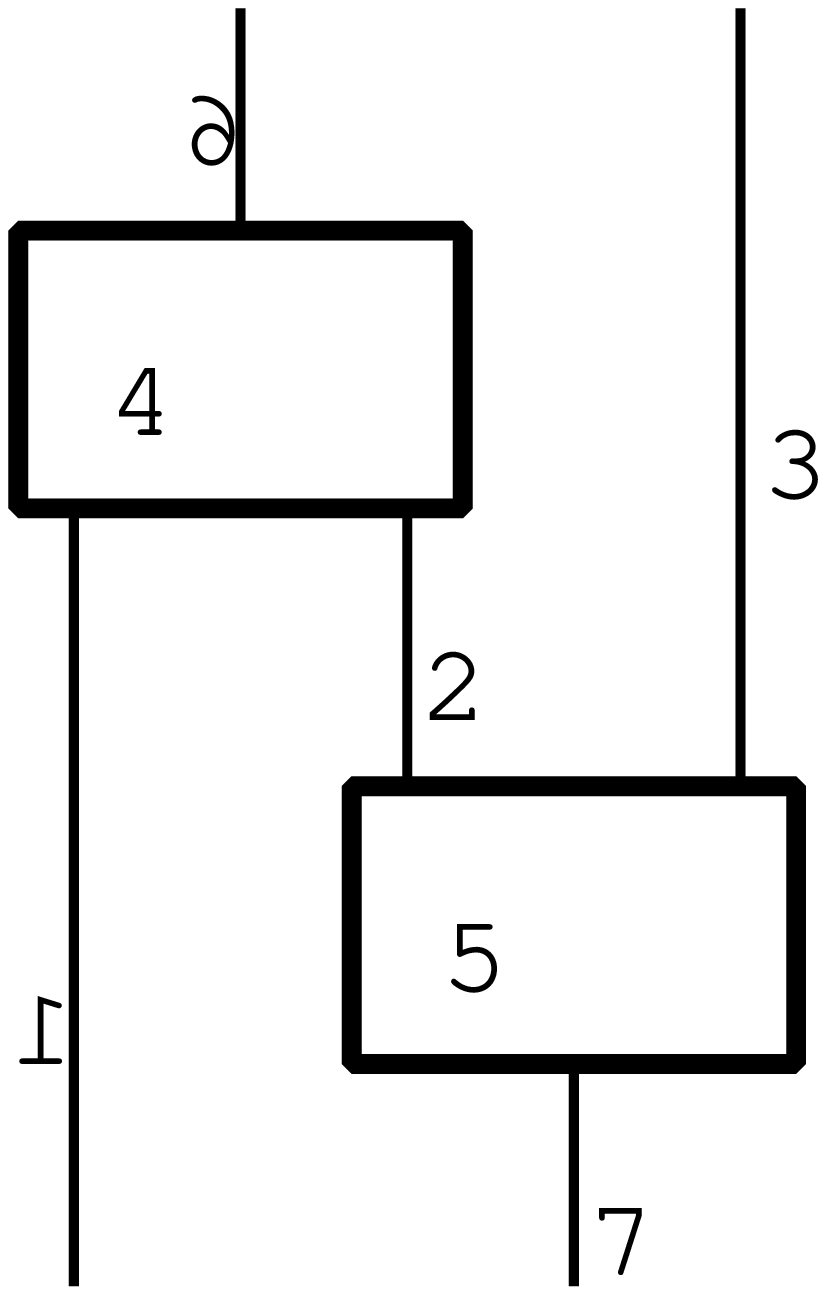}}
$ = \left[\omega(g_1h_1, g_2 , h_2) \;  \omega (g_1h_1 ,g_2 h_2, g_3)\right] \hspace{1em} \raisebox{-3em}{
	\psfrag{1}{\reflectbox{$ g_1 h_1g_2 $}}
	\psfrag{2}{$ g_3 $}
	\psfrag{3}{$g_1 h_1 g_2 h_2 g_3 $}
	\psfrag{4}{$ h_2 $}
	\psfrag{5}{\reflectbox{$ g_1 $}}
	\psfrag{6}{$ g_2 h_2 g_3 $}
	\psfrag{7}{$ h_1 $}
	\includegraphics[scale=0.2]{figures/cnn/winetower.eps}
}$
\end{lem}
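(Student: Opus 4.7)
Both parts are equalities of $N$-$N$-bilinear morphisms in $\mathcal C_{NN}$. My plan is to verify each by direct calculation in the style of Lemma \ref{tribox} and Propositions \ref{smbox}--\ref{smbox*alg}: evaluate both sides of the purported equation on a generic simple tensor and reduce the resulting word in the $u(\cdot,\cdot)$'s using the cocycle identity \ref{uomega} together with the normalizations \ref{GL}.

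For part (i), let me write $T^h_{g_1, g_2}$ for the tower $X_{g_1} \us N \otimes X_{g_2} \to X_{g_1 h g_2}$ and $W^h_{g_1, g_2}$ for the wine (its adjoint). I read the LHS as the composition
\[
\bigl(\t{id}_{X_{g_1}} \us N \otimes T^{h_2}_{g_2, g_3}\bigr) \circ \bigl(W^{h_1}_{g_1, g_2} \us N \otimes \t{id}_{X_{g_3}}\bigr)
\]
and the RHS as $W^{h_1}_{g_1,\, g_2 h_2 g_3} \circ T^{h_2}_{g_1 h_1 g_2,\, g_3}$; both are morphisms $X_{g_1 h_1 g_2} \us N \otimes X_{g_3} \to X_{g_1} \us N \otimes X_{g_2 h_2 g_3}$, the LHS factoring through $X_{g_1} \us N \otimes X_{g_2} \us N \otimes X_{g_3}$ and the RHS through the single bimodule $X_{g_1 h_1 g_2 h_2 g_3}$. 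Apply both sides to a generic simple tensor $[u]_{g_1 h_1 g_2} \us N \otimes [z]_{g_3}$, using the wine formula from the remark preceding Proposition \ref{resid} (with a fixed basis $\{b_i\}$ of $Q$ over $N$) and the tower formula. Both outputs produce sums over $i$ of tensors of the shape $\abs{H}^{-1} \sum_i [\alpha_{g_1}(b_i) \text{ with a $u$-prefactor}]_{g_1} \us N \otimes [\alpha_{h_1^{-1}}(b_i^*) \text{ with a $u$- and $z$-tail}]_{g_2 h_2 g_3}$.

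The heart of the argument is the comparison of these two $u$-words. The LHS word naturally contains the factor $u(g_2, h_2)\, u(g_2 h_2, g_3)$ (from the right tower), whereas the RHS word contains $u(g_1 h_1 g_2, h_2)\, u(g_1 h_1 g_2 h_2, g_3)\, u^*(g_1 h_1,\, g_2 h_2 g_3)$ (from the compound tower followed by the wine on the full product). Two successive invocations of \ref{uomega} bridge them: first at the triple $(g_1 h_1, g_2, h_2)$, rewriting $u(g_1 h_1, g_2)\, u(g_1 h_1 g_2, h_2)$ as $\omega(g_1 h_1, g_2, h_2)\, \alpha_{g_1 h_1}(u(g_2, h_2))\, u(g_1 h_1, g_2 h_2)$, and then at $(g_1 h_1, g_2 h_2, g_3)$ to re-expand $u(g_1 h_1, g_2 h_2)\, u(g_1 h_1 g_2 h_2, g_3)$. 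After substitution, the $u(g_1 h_1, \cdot)$-factors telescope against $u^*(g_1 h_1,\, g_2 h_2 g_3)$, and the surviving $\alpha_{g_1 h_1}$-conjugates can be moved outside via $\alpha_{g_1 h_1}(\cdot) = \alpha_{g_1}(\alpha_{h_1}(\cdot))$ (modulo $u(g_1,h_1)$, which cancels the $u^*(g_1,h_1)$ from the wine), leaving precisely the LHS word multiplied by $\omega(g_1 h_1, g_2, h_2)\, \omega(g_1 h_1, g_2 h_2, g_3)$. Moving these scalars to the other side yields the coefficient $\ol\omega(g_1 h_1, g_2, h_2)\, \ol\omega(g_1 h_1, g_2 h_2, g_3)$ asserted in (i).

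Part (ii) I plan to deduce from (i) by taking the $*$-adjoint. Since $(T^h_{g_1, g_2})^* = W^h_{g_1, g_2}$, the adjoint of the LHS of (i) is $\bigl(T^{h_1}_{g_1, g_2} \us N \otimes \t{id}_{X_{g_3}}\bigr) \circ \bigl(\t{id}_{X_{g_1}} \us N \otimes W^{h_2}_{g_2, g_3}\bigr)$, which is the LHS of (ii); the adjoint of the RHS of (i) is $W^{h_2}_{g_1 h_1 g_2,\, g_3} \circ T^{h_1}_{g_1,\, g_2 h_2 g_3}$, the RHS of (ii); and the scalar conjugates from $\ol\omega\, \ol\omega$ to $\omega\, \omega$. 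The only real obstacle throughout is the bookkeeping in the cocycle reductions: several nested $u(\cdot,\cdot)$'s must be shuffled in the correct order for the $\omega$-factors to emerge exactly as stated, and the normalizations \ref{GL} must be invoked whenever a would-be $\omega$-term involving $e$ appears and needs to be killed.
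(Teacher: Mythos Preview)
Your proposal is correct and follows essentially the same route as the paper: evaluate both sides on a generic element of $X_{g_1h_1g_2}\us N\otimes X_{g_3}$, reduce the resulting $u$-words via two applications of \eqref{uomega} at the triples $(g_1h_1,g_2,h_2)$ and $(g_1h_1,g_2h_2,g_3)$, and obtain (ii) from (i) by taking adjoints. The paper carries out exactly this computation (including the intermediate basis-shuffling step you allude to) and likewise derives (ii) in one line from the $*$.
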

\begin{proof}
The left side acting on $ [x]_{g_1 h_1 g_2} \us N \otimes [y]_{g_3}$ gives
\begin{align*}
& \; \abs{H}^{-1} \us i \sum [x u^*(g_1 h_1 ,g_2) u^* (g_1, h_1) \alpha_{g_1} (b_i)]_{g_1} \us N \otimes [\alpha^{-1}_{h_1} (b^*_i) \alpha_{g_2} (\alpha_{h_2} (y))\;  u(g_2 , h_2) u(g_2 h_2 , g_3) ]_{g_2h_2 g_3}\\
=& \; \abs{H}^{-1} \us {i,j} \sum [x u^*(g_1 h_1 ,g_2) u^* (g_1, h_1) \alpha_{g_1} (b_i)]_{g_1}\\
& \; \us N \otimes [E_N \left(b^*_i \alpha_{h_1}  \left(\alpha_{g_2} (\alpha_{h_2} (y))\;  u(g_2 , h_2) u(g_2 h_2 , g_3)\right) b_j \right) \alpha^{-1}_{h_1} (b^*_j)]_{g_2h_2 g_3}\\
=& \; \abs{H}^{-1} \us {j} \sum [x u^*(g_1 h_1 ,g_2) u^* (g_1, h_1) \alpha_{g_1} \left(\alpha_{h_1}  \left(\alpha_{g_2} (\alpha_{h_2} (y))\;  u(g_2 , h_2) u(g_2 h_2 , g_3)\right) b_j \right)]_{g_1} \us N \otimes [\alpha^{-1}_{h_1} (b^*_j)]_{g_2h_2 g_3}\\
=& \; \abs{H}^{-1} \us {j} \sum [x \; \alpha_{g_1h_1g_2} (\alpha_{h_2} (y)) \; \ul{u^*(g_1 h_1 ,g_2) u^* (g_1, h_1) \; \alpha_{g_1} \left( \alpha_{h_1} \left(  u(g_2 , h_2) u(g_2 h_2 , g_3) \right)  \right)} \; \alpha_{g_1} (b_j)]_{g_1}\\
& \; \us N \otimes [\alpha^{-1}_{h_1} (b^*_j)]_{g_2h_2 g_3}.
\end{align*}
Simplifying the underlined expression, we get
\begin{align*}
& \; u^*(g_1 h_1 ,g_2)  \; \alpha_{g_1 h_1}  \left(  u(g_2 , h_2) u(g_2 h_2 , g_3) \right) \; u^* (g_1, h_1)\\
= & \; \ol \omega(g_1h_1, g_2 , h_2)  \; u(g_1 h_1 g_2 , h_2) u^* (g_1 h_1,g_2h_2) \; \alpha_{g_1 h_1}  \left( u(g_2 h_2 , g_3) \right) \; u^* (g_1, h_1)\\
= & \; \ol \omega(g_1h_1, g_2 , h_2) \; u(g_1 h_1 g_2 , h_2) \; \ol \omega (g_1h_1 ,g_2 h_2, g_3) u(g_1h_1g_2h_2 , g_3) u^*(g_1h_1, g_2h_2g_3) \; u^* (g_1, h_1).
\end{align*}
This is exactly what we wanted from the right side acting on $ [x]_{g_1 h_1 g_2} \us N \otimes [y]_{g_3}$.

(ii) This follows from taking $ * $ on both sides.
\end{proof}
\subsection{The  affine annular algebra over the weight set $ \Lambda $ indexed by $ G $}{$  $}

Let $ \mcal A $ denote the affine annular algebra of $ \mcal C_{NN} $ with respect to $ G $ which indexes the weight set $ \Lambda $.
In our set up, the indexing set $ G $ is more important rather than the set $ \Lambda $; for instance, $ X_h $ and $ X_e $ are identical in $ \Lambda \subset \t{ob} (\mcal C_{NN}) $ when $ h\in H $.

We will recall the definition of $ \mcal A $ here.
For $ g_1,g_2 \in G $, we have a vector space $ \mcal A_{g_1,g_2}$ which is the quotient of the vector space $\us {W\in \t {ob} (\mcal C_{NN})} \bigoplus  \mcal C_{NN} \left(X_{g_1} \us N \otimes W \; , \; W \us N \otimes X_{g_2} \right)$ over the subspace generated by elements of the form $ \left[a \circ (\t {id}_{X_{g_1}} \us N \otimes f ) -  (f \us N \otimes \t {id}_{X_{g_2}}  ) \circ a \right]$ for $ a \in \mcal C_{NN} \left(X_{g_1} \us N \otimes Z \; , \; W \us N \otimes X_{g_2} \right)$ and $ f \in \mcal C_{NN} (W,Z) $.
We denote the quotient map by $ \psi_{g_1 , g_2} $. We will also use the notation $ \psi^W_{g_1,g_2} $ (resp., $ \psi^s_{g_1,g_2} $) for the restriction map $ \left. \us {}{\psi_{g_1,g_2}} \right|_{\mcal C_{NN} \left(X_{g_1} \us N \otimes W \; , \; W \us N \otimes X_{g_2} \right)} $ (resp., $ \left. \us {} {\psi_{g_1,g_2}} \right|_{\mcal C_{NN} \left(X_{g_1} \us N \otimes X_s \; , \; X_s \us N \otimes X_{g_2} \right)} $ for $ s\in G $).
Further, $ \mcal A^W_{g_1,g_2} $ and $ \mcal A^s_{g_1,g_2} $ will denote the range of the maps $ \psi^W_{g_1,g_2} $ and $ \psi^s_{g_1,g_2} $ respectively.

\noindent\textbf{Notation.} For any two vectors $ v_1 $ and $ v_2 $ in any vector space, we will write $ v_1 \sim v_2 $ when $ \t{span } v_1 = \t{span } v_2 $.
\begin{prop}
$ \mcal A_{g_1,g_2} $ is linearly spanned by the set
$
\left\{
\left. \psi^s_{g_1,g_2} \left( \; \raisebox{-4.4em}{\hexagon{$ g_1 $}{$ s $}{$e $}{$ s \:$}{$ g_2 $}{$ h_2 $}{$ h_1 $}{$e  $}{\psfrag{9}{$ g_1 s $}\psfrag{0}{$ s h_2 g_2 $}}} \hspace{.5em} \right) \right|
h_1,h_2 \in H,s\in G \right\}
$.
\end{prop}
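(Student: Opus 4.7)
The plan is to proceed in three stages: (i) reduce from an arbitrary object $W$ to a single-weight representative $W = X_s$ for some $s \in G$; (ii) use Remark \ref{hexagon} to express the morphism space $\mcal C_{NN}(X_{g_1} \us N \otimes X_s , X_s \us N \otimes X_{g_2})$ in terms of hexagons; (iii) collapse the four free parameters of a general hexagon down to two using the defining quotient relation together with Lemma \ref{tribox}.

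For (i), since $\Lambda$ is full, every $W \in \t{ob}(\mcal C_{NN})$ appears as a direct summand of an iterated tensor product of elements of $\Lambda$. The resolution of the identity in Proposition \ref{resid} decomposes $X_{s_1} \us N \otimes X_{s_2}$ as $\bplus_{h \in H} X_{s_1 h s_2}$, and applying the quotient relation defining $\psi$ with $f = \t{tower}_h$ transports any element in the $(X_{s_1} \us N \otimes X_{s_2})$-summand of $\mcal A_{g_1,g_2}$ to a sum of elements in the $X_{s_1 h s_2}$-summands. Iterating, every element of $\mcal A_{g_1,g_2}$ lies in $\sum_{s \in G} \mcal A^s_{g_1,g_2}$. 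Part (ii) is immediate from Remark \ref{hexagon}: $\mcal A^s_{g_1,g_2}$ is linearly spanned by $\psi^s_{g_1,g_2}$ applied to the hexagons $H(h_1,h_2,h_3,h_4;s) := \t{wine}_{h_4} \circ \t{1disc}_{h_1,h_2} \circ \t{tower}_{h_3}$, as $(h_i) \in H^4$ ranges subject to $h_1 g_1 h_3 s = s h_4 g_2 h_2$.

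The main step, (iii), is to show that modulo the quotient relation and up to scalar, $H(h_1, h_2, h_3, h_4; s)$ is equivalent to $H(h_3 h_1,\, e,\, e,\, h_2 h_4;\, h_3 s h_2^{-1})$, which is precisely the claimed form. This is carried out in two quotient-relation moves. First, on the input side take the isomorphism $f_1 = \t{1disc}_{h_3,e}:X_s \to X_{h_3 s}$ (unitary by Proposition \ref{smbox*alg}(ii)): Lemma \ref{tribox}(i) simplifies $\t{tower}_{h_3} \circ (\t{id} \otimes \t{1disc}_{h_3^{-1}, e})$ to a scalar multiple of $\t{tower}_e^{g_1, h_3 s}$, and Proposition \ref{smbox*alg}(i) collapses the residual disc, so the quotient identifies $\psi^s_{g_1,g_2}(H(h_1,h_2,h_3,h_4;s))$ with a scalar multiple of a hexagon at weight $h_3 s$ with $h_3' = e$ and disc-left-parameter $h_3 h_1$. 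Second, on the output side use $f_2 = \t{1disc}_{e,h_2}:X_{h_3 s} \to X_{h_3 s h_2^{-1}}$ together with the adjoint of Lemma \ref{tribox}(ii) (via Proposition \ref{smbox*alg}(ii)), rewriting $(f_2 \otimes \t{id}) \circ \t{wine}_{h_4}^{h_3 s, g_2}$ as a scalar multiple of $\t{wine}_{h_2 h_4}^{h_3 s h_2^{-1}, g_2}$ and thereby absorbing $h_2$ into the wine parameter. Composing both moves yields the stated form at weight $h_3 s h_2^{-1}$.

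The main obstacle is the bookkeeping of the scalar cocycle factors produced by repeated applications of Lemma \ref{tribox}, Proposition \ref{smbox*alg}(i)-(ii), and the $3$-cocycle identity \ref{3coc}. Since the conclusion is only a spanning statement, all such scalars are safely absorbed into the coefficients of the linear combination. A secondary point is to verify that each disc chosen as the isomorphism $f_i$ satisfies the constraint $h g = g' h'$ for its four labels, so that it genuinely represents an element of the relevant morphism space; this is routine given the hexagon constraint $h_1 g_1 h_3 s = s h_4 g_2 h_2$.
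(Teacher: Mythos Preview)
Your proposal is correct and follows essentially the same route as the paper: reduce to single-weight morphisms using fullness of $\Lambda$, expand via the hexagon basis of Remark \ref{hexagon}, and then use the quotient relation together with Lemma \ref{tribox} and Proposition \ref{smbox*alg} to normalize the four hexagon parameters down to two. The only cosmetic difference is that you perform two separate quotient moves (one with $f_1$ absorbing $h_3$, one with $f_2$ absorbing $h_2$), whereas the paper does a single move with the disc $f = \raisebox{-1.4em}{\1disc{$h_3$}{$s$}{$t$}{$h_2$}}$ for $t = h_3 s h_2^{-1}$, then applies both parts of Lemma \ref{tribox} at once and collapses the resulting three stacked discs via Proposition \ref{smbox*alg}(i); your account is a bit terse in that each quotient move necessarily acts on \emph{both} the tower and the wine (so, e.g., your first move also requires the adjoint of Lemma \ref{tribox}(ii) on the top to produce the ``residual disc'' you then collapse), but the stated intermediate and final forms are correct.
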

\noindent We denote the above element by $ a(h_1 , g_1 , s , h_2 , g_2) $.
Note that $ h_1 g_1  = s \; h_2 g_2 \; s^{-1} $.
\begin{proof}
Since the weight set $\Lambda = \{X_s : s \in G\} $ is full, we may use the relation satisfied by the quotient map $ \psi_{g_1,g_2} $ to say  $ \mcal A_{g_1,g_2} = \t{span} \left(\us {s \in G} \cup \mcal A^s_{g_1,g_2}\right)$.
So, by Remark \ref{hexagon}, $ \mcal A_{g_1 , g_2} $ can be linearly generated by elements of the form $ \psi^s_{g_1,g_2} \left( \; \raisebox{-4.4em}{\hexagon{$ g_1 $}{$ s $}{$h_3 $}{$ s \:$}{$ g_2 $}{$ h_4 $}{$ h_1 $}{$h_2  $}{\psfrag{9}{$ g_1 h_3 s $}\psfrag{0}{$ s h_4 g_2 $}}} \hspace{.5em} \right) $ for $ h_1,h_2,h_3,h_4 \in H, s\in G $.

Using Proposition \ref{smbox*alg}, we may write \hspace{1em} \raisebox{-2.7 em}
{\psfrag{1}{\reflectbox{$h^{-1}_1 $}}
	\psfrag{2}{$ h_1 g h_2 $}
	\psfrag{3}{$ g $}
	\psfrag{4}{$ h_2 $}
	\psfrag{5}{\reflectbox{$h_1 $}}
	\psfrag{6}{$ g $}
	\psfrag{7}{$ h^{-1}_2 $}
	\includegraphics[scale=0.2]{figures/cnn/2disc.eps}} \hspace{1.5em}
$\sim$ \raisebox{-1.5 em}{\1disc{$e $}{$ g $}{$ g $}{$ e $}} 
$ = \t{id}_{X_g}$.
Again, using the relation satisfied by the quotient map and setting $ t:= h_3 s h^{-1}_2 $, we get
\[
\psi^s_{g_1,g_2} \left( \; \raisebox{-4.4em}{\hexagon{$ g_1 $}{$ s $}{$h_3 $}{$ s \:$}{$ g_2 $}{$ h_4 $}{$ h_1 $}{$h_2  $}{\psfrag{9}{$ g_1 h_3 s $}\psfrag{0}{$ s h_4 g_2 $}}} \hspace{.5em} \right) 
\; \sim \;
\psi^t_{g_1,g_2} \left( \left( \raisebox{-1.5 em}{\1disc{$h_3 $}{$ s $}{$ t $}{$ h_2 $}}
\hspace{1em} \us N \otimes \t{id}_{X_{g_2}} \right) \circ \;
\raisebox{-4.4em}{\hexagon{$ g_1 $}{$ s $}{$h_3 $}{$ s\: $}{$ g_2 $}{$ h_4 $}{$ h_1 $}{$h_2  $}{\psfrag{9}{$ g_1 h_3 s $}\psfrag{0}{$ s h_4 g_2 $}}} \hspace{1em}
\circ \left(\t{id}_{X_{g_1}} \us N \otimes \hspace{1em} \raisebox{-1.5 em}{\1disc{$h^{-1}_3 $}{$ t $}{$ s $}{$ h^{-1}_2 $}} 
\right) \right).
\]
We then apply Lemma \ref{tribox} (i) and (ii) to get
\[
\psi^s_{g_1,g_2} \left( \; \raisebox{-4.4em}{\hexagon{$ g_1 $}{$ s $}{$h_3 $}{$ s \:$}{$ g_2 $}{$ h_4 $}{$ h_1 $}{$h_2  $}{\psfrag{9}{$ g_1 h_3 s $}\psfrag{0}{$ s h_4 g_2 $}}} \hspace{.5em} \right)
\; \sim \; \psi^t_{g_1,g_2} \left(
\raisebox{-1.5em}{\wine{$ t $}{$ g_2 $}{$ th_2 h_4g_2 $}{\!\!$ h_2h_4 $}}
\; \circ  \hspace{4em}
\raisebox{-3.85 em}
{\psfrag{1}{\reflectbox{$h_3 $}}
	\psfrag{2}{$ h^{-1}_3 t h_2 h_4 g_2$}
	\psfrag{3}{$ t h_2 h_4 g_2 $}
	\psfrag{4}{$ e $}
	\psfrag{5}{\reflectbox{$h_1 $}}
	\psfrag{6}{$ g_1 t h_2 $}
	\psfrag{7}{$ h_2 $}
	\psfrag{8}{\reflectbox{$ e\: $}}
	\psfrag{9}{$ h^{-1}_2 $}
	\psfrag{0}{$ g_1 t$}
	\includegraphics[scale=0.2]{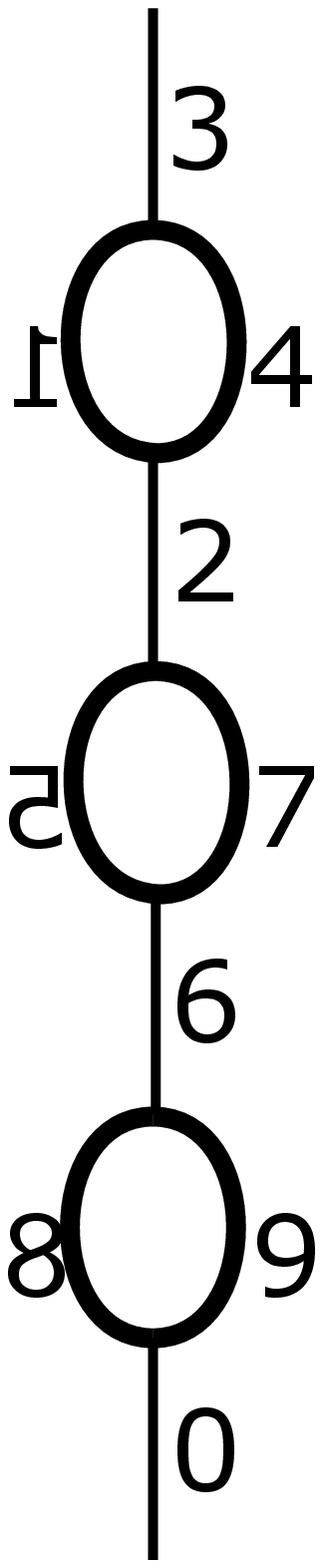}} \hspace{4em}
\circ \; 
\raisebox{-1.5em}{\tower{$ g_1 $}{$ t $}{$ g_1t $}{$ e $}}
\right).
\]
where the three vertically stacked discs correspond to their composition.
Once we apply the multiplication of these discs as stated in Proposition \ref{smbox*alg} (i), it becomes clear that the resultant (up to a unit scalar) is indeed of the form mentioned in the statement of this proposition.
\end{proof}

We will next unravel the multiplication in $ \mcal A $.
\begin{rem}\label{multrem}
Multiplication of affine annular morphisms is given by
\begin{align*}
\psi^t_{g_2 ,g_3} (c) \circ \psi^s_{g_1 ,g_2} (d)\;  = \; \psi^{X_{s} \us N \otimes X_{t}}_{g_1 , g_3} \left( \left(\t{id}_{X_s} \us N \otimes c\right) \; \circ \; \left(d \us N  \otimes \t{id}_{X_t}\right)  \right)
\end{align*}
for $ c \in \mcal C_{NN} (X_{g_2} \otimes X_t \; , \; X_t \otimes X_{g_3}) $ and $ d \in \mcal C_{NN} (X_{g_1} \otimes X_s \; , \; X_s \otimes X_{g_2}) $.
Using Proposition \ref{resid}, we can rewrite the above as
\begin{align*}
&\;\psi^t_{g_2 ,g_3} (c) \circ \psi^s_{g_1 ,g_2} (d)\\
=& \; \us {h \in H} \sum \psi^{sht}_{g_1 , g_3}
\left( \raisebox{-1.5em}{\tower{$ s $}{$ t $}{$ sht $}{$ h $}}
\us N \otimes  \t{id}_{X_{g_3}}  \; \circ \; \left(\t{id}_{X_s} \us N \otimes c\right) \; \circ \; \left(d \us N  \otimes \t{id}_{X_t}\right) \; \circ \; \t{id}_{X_{g_1}} \us N \otimes \; 
\raisebox{-1.5em}{\wine{$ s $}{$ t $}{$ sht $}{$ h $}}
\right).
\end{align*}
\end{rem}
\begin{prop}\label{mult}
\begin{align*}
& \left[\omega (h''_2, g_2, t)  \omega (t,h_3,g_3) \;  a(h''_2 , g_2 , t , h_3 , g_3)\right]
\; \circ \;
\left[ \omega (h_1 , g_1 , s) \omega (s, h'_2, g_2) \; a(h_1 , g_1 , s , h'_2 , g_2)\right]\\
= & \; \delta_{h'_2 = h''_2} \; \left[ \omega (s,t,h_3g_3) \ol \omega (s, h'_2 g_2, t)   \omega (h_1g_1 , s, t)    \right] \; \left[\omega (h_1 , g_1 , s t)\; \omega (st,h_3,g_3) \; a(h_1 , g_1 , st , h_3 , g_3)\right]
\end{align*}
\end{prop}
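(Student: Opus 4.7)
The plan is to unfold each of $a(h_1, g_1, s, h'_2, g_2)$ and $a(h''_2, g_2, t, h_3, g_3)$ via the hexagonal decomposition of Remark~\ref{hexagon} as a stack of tower, 1-disc, and wine morphisms, then apply the affine multiplication recipe of Remark~\ref{multrem} to express the product as a sum over $h \in H$ of morphisms in $\mathcal A^{sht}_{g_1, g_3}$. Each summand in this sum is an eight-layer vertical stack (two outer tower/wine wrappers from the multiplication formula, and the tower--1disc--wine expansions of $c$ and $d$), and the goal is to collapse it to the canonical hexagonal form for a single $a(h_1, g_1, st, h_3, g_3)$, extracting cocycle scalars at each step.

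The collapse proceeds as follows. First, the central pair in the stack---$d$'s terminal wine tensored with $\mathrm{id}_{X_t}$, followed by $\mathrm{id}_{X_s}$ tensored with $c$'s initial tower---is exactly the LHS of Lemma~\ref{2tri}(i) with the specializations $h_1 \mapsto h'_2$ and $h_2 \mapsto e$; applying it produces, up to a factor of $\ol\omega(sh'_2, g_2, t)$, a wine followed by a tower passing through the intermediate object $X_{sh'_2 g_2 t}$. Next, I use Lemma~\ref{tribox} to commute $c$'s and $d$'s 1-discs past this new wine and new tower respectively, bringing the two 1-discs face to face as endomorphism-like morphisms into and out of $X_{sh'_2 g_2 t}$. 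Composing them via Proposition~\ref{smbox*alg}(i) forces compatibility of boundary $H$-labels; the requirement that the targets and sources as elements of $G$ match produces the factor $\delta_{h'_2 = h''_2}$, and when it holds, the product is a single 1-disc with outer $H$-labels $h_1$ and $h_3$. Finally, Lemma~\ref{2tri}(ii) (applied in its reverse direction) fuses the outer wrappers from Remark~\ref{multrem} with the leftover wine and tower, selecting the unique $h \in H$ making the index compatible so that the sum collapses to a single term, and yielding the canonical hexagon for $a(h_1, g_1, st, h_3, g_3)$.

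The principal technical obstacle is the cocycle bookkeeping. Each invocation of Lemma~\ref{2tri}, Lemma~\ref{tribox}, and Proposition~\ref{smbox*alg}(i) introduces two or three $\omega$-factors, and all of these, together with the four $\omega$-prefactors on the LHS of the statement, must be collapsed via repeated application of the 3-cocycle identity~\eqref{3coc} and the normalization relations~\eqref{GL} to reproduce the five-factor scalar
\[
\omega(s, t, h_3 g_3)\,\ol\omega(s, h'_2 g_2, t)\,\omega(h_1 g_1, s, t)\,\omega(h_1, g_1, st)\,\omega(st, h_3, g_3)
\]
on the RHS. The algebraic simplification of the diagrams is routine once the preparatory lemmas are in place; the laborious part is this cocycle cancellation, analogous in spirit to but heavier than the multiplication computation carried out in Section~\ref{diag}.
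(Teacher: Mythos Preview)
Your overall architecture matches the paper's proof: expand via Remark~\ref{multrem}, rewrite the interior using Lemma~\ref{2tri} and Lemma~\ref{tribox}, collapse the sum over $h$, and then track cocycles. The order in which you apply the rewriting lemmas differs slightly (the paper applies Lemma~\ref{2tri} at all three tower/wine interfaces first, then Lemma~\ref{tribox} twice; you do the central Lemma~\ref{2tri} first and save the outer two for the end), but this is immaterial.

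There is, however, a genuine misattribution in your account of where the two delta conditions come from. Composing two $1$-discs via Proposition~\ref{smbox*alg}(i) never produces a Kronecker delta: the middle object always matches by construction, and the composite is simply a scalar times another $1$-disc. Likewise, Lemma~\ref{2tri}(ii) is a bare identity between morphisms and contains no selection mechanism for $h$. In the paper (and in any correct execution of your plan) both collapses come from Proposition~\ref{resid}: after the Lemma~\ref{tribox} moves and the outer Lemma~\ref{2tri} rewrites, one is left with compositions of the form $\mathrm{tower}(\cdot,\cdot,h')\circ\mathrm{wine}(\cdot,\cdot,h'')$, which equal the identity when $h'=h''$ and vanish otherwise. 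One such pairing forces $h=e$ and the other forces $h'_2=h''_2$. If you rerun your argument with this correction---invoking Proposition~\ref{resid} rather than Proposition~\ref{smbox*alg}(i) and Lemma~\ref{2tri}(ii) for the vanishing---it goes through and coincides with the paper's computation up to the harmless reordering noted above.
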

\begin{proof}
The above remark lets us express the element $ \left[a(h''_2 , g_2 , t , h_3 , g_3) \; \circ \; a(h_1 , g_1 , s , h'_2 , g_2)\right] $ as a sum over $ h\in H $ of
\begin{equation*}
\psi^{sht}_{g_1 , g_3}
\left( \us {\displaystyle =: b_h \t{ say}} {\underbracket{
\raisebox{-1.5em}{\tower{$ s $}{$ t $}{$ sht $}{$ h $}}
\us N \otimes  \t{id}_{X_{g_3}}  \; \circ
\; \left(  \t{id}_{X_s}  \us N \otimes \hspace{1em} 
\raisebox{-4.4em}{\hexagon{$ g_2 $}{$ t $}{$e $}{$ t\: $}{$ g_3 $}{$ h_3 $}{$ h''_2 $}{$e  $}{\psfrag{9}{$ g_2 t$}\psfrag{0}{$ t h_3 g_3 $}}}
\; \right) \; \circ \; \left( \;
\raisebox{-4.4em}{\hexagon{$ g_1 $}{$ s $}{$e $}{$ s \:$}{$ g_2 $}{$ h'_2 $}{$ h_1 $}{$ e $}{\psfrag{9}{$ g_1 s $}\psfrag{0}{$ s h'_2 g_2 $}}}
\hspace{1em} \us N \otimes \t{id}_{X_t} \right) \; \circ \; \t{id}_{X_{g_1}} \us N \otimes
\raisebox{-1.5em}{\wine{$ s $}{$ t $}{$ sht $}{$ h $}}
}}
\right).
\end{equation*}
We could use Lemma \ref{2tri} at three instances in the above expression of $ b_h $, and thereby we may rewrite $ b_h $ up to a unit scalar as
\begin{align}\label{step1}
	\hspace{1em}
\raisebox{-3em}{
	\psfrag{1}{\reflectbox{$ sht $}}
	\psfrag{2}{$ g_3 $}
	\psfrag{3}{\hspace{-3.5em}$shth_3 g_3 $}
	\psfrag{4}{$ h_3 $}
	\psfrag{5}{\reflectbox{$ s $}}
	\psfrag{6}{$ t h_3 g_3 $}
	\psfrag{7}{$ h $}
	\includegraphics[scale=0.2]{figures/cnn/winetower.eps}}
\hspace{2em} \circ \left(\t{id}_{X_s} \us N \otimes \;
\raisebox{-1.5 em}{\1disc{$h''_2 $}{$ g_2 t$}{$ t h_3 g_3 $}{$ e $}}\;\;
\right) \circ \hspace{1.5em}
\raisebox{-3em}{
	\psfrag{1}{\reflectbox{$ s $}}
	\psfrag{2}{$ g_2 t $}
	\psfrag{3}{\hspace{-3em}$s h'_2 g_2 t $}
	\psfrag{4}{$ h'_2 $}
	\psfrag{5}{\reflectbox{$ s h'_2 g_2 $}}
	\psfrag{6}{$ t  $}
	\psfrag{7}{$ e $}
	\includegraphics[scale=0.2]{figures/cnn/winetower.eps}}\hspace{1em}
\circ \left(
\raisebox{-1.5 em}{\1disc{$h_1 $}{$ g_1 s$}{$s h'_2 g_2 $}{$ e $}} \;
\us N \otimes \t{id}_{X_t} \right)\circ
\raisebox{-3em}{
	\psfrag{1}{\reflectbox{$ g_1s $}}
	\psfrag{2}{$ t $}
	\psfrag{3}{$g_1 sht $}
	\psfrag{4}{$ h $}
	\psfrag{5}{\reflectbox{$ g_1 $}}
	\psfrag{6}{$ sht $}
	\psfrag{7}{$ e $}
	\includegraphics[scale=0.2]{figures/cnn/winetower.eps}}
\comments{
\\
&\triup{sht}{g_3}{shth_3 g_3}{3}{3}{1} \; \circ \;
\left(  
 \ous
{\! \! \! \!   \displaystyle \tridown{s}{t h_3 g_3}{\! \! \! \! \! \! shth_3 g_3}{3}{3}{-0.5} }
{\t{id}_{X_s}  \us N \otimes \displaystyle \smbox{h''_2}{g_2 t}{t h_3 g_3}{e}{4}{2}{-0.3}}
{\! \! \! \! \! \! \! \! \! \! \! \!  \displaystyle \triup{s}{g_2 t}{\! \! \! sh'_2 g_2 t}{3}{3}{1}} 
\right) \; \circ \; \left( \ous
{\; \; \; \; \; \; \; \; \;  \displaystyle \tridown{s h'_2 g_2}{t}{\; \; \; \; \; \; \; sh'_2 g_2 t}{3}{3}{-0.5}}
{\displaystyle \smbox{h_1}{g_1 s}{sh'_2 g_2}{e \;\;\;}{4}{2}{-0.3} \us N \otimes \t{id}_{X_t}}
{\; \; \; \; \; \; \; \; \; \; \; \; \;  \displaystyle \triup{g_1 s}{t}{g_1 s h t}{3}{3}{1}} \right) \; \circ \; \tridown{g_1}{sht}{g_1 s h t}{3}{3}{-0.5}
}
\hspace{1.5em}.
\end{align}

In the above expression \ref{step1}, using Lemma \ref{tribox} (ii), we could make the disc in the fourth term pass through the bottom box in the third term to its top.
As a result, expression \ref{step1} turns out to be a scalar multiple of
\begin{align}\label{step2}
	\hspace{1em}
	\raisebox{-3em}{
		\psfrag{1}{\reflectbox{$ sht $}}
		\psfrag{2}{$ g_3 $}
		\psfrag{3}{\hspace{-3.5em}$shth_3 g_3 $}
		\psfrag{4}{$ h_3 $}
		\psfrag{5}{\reflectbox{$ s $}}
		\psfrag{6}{$ t h_3 g_3 $}
		\psfrag{7}{$ h $}
		\includegraphics[scale=0.2]{figures/cnn/winetower.eps}}
	\hspace{2em} \circ \left(\t{id}_{X_s} \us N \otimes \;
	\raisebox{-1.5 em}{\1disc{$h''_2 $}{$ g_2 t$}{$ t h_3 g_3 $}{$ e $}}\;\;
	\right) \circ\hspace{2em}
\raisebox{-4.4em}{\hexagon{$ g_1s $}{$ t $}{$e $}{$ s $}{$ g_2t $}{$ h'_2 $}{$ h_1 $}{$e  $}{\psfrag{9}{\hspace{-2em}$ g_1 st $}\psfrag{0}{\hspace{-3em}$ sh'_2g_2t $}}}
\hspace{1em}	\circ \;
	\raisebox{-3em}{
		\psfrag{1}{\reflectbox{$ g_1s $}}
		\psfrag{2}{$ t $}
		\psfrag{3}{$g_1 sht $}
		\psfrag{4}{$ h $}
		\psfrag{5}{\reflectbox{$ g_1 $}}
		\psfrag{6}{$ sht $}
		\psfrag{7}{$ e $}
		\includegraphics[scale=0.2]{figures/cnn/winetower.eps}}
	\hspace{1.5em}.
\end{align}
Observe that the composition of the bottom box of the third term and the top box in the fourth term (in expression \ref{step2}) is $ \t{id}_{X_{g_1 st}} $ if $ h=e $ and zero otherwise; this follows from Proposition \ref{resid}.
Similarly, Lemma \ref{tribox} (i) allows us to move the disc in the second term up through the bottom box of the first term, and thereby the expression \ref{step2} becomes a scalar multiple of
\begin{align}\label{step3}
	\hspace{1em}
	\raisebox{-3em}{
		\psfrag{1}{\reflectbox{$ sht $}}
		\psfrag{2}{$ g_3 $}
		\psfrag{3}{\hspace{-8.75em}($s h h''_2g_2 t = $)$shth_3 g_3 $}
		\psfrag{4}{$ h_3 $}
		\psfrag{5}{\reflectbox{$ s $}}
		\psfrag{6}{$ g_2 t $}
		\psfrag{7}{\!$ h h''_2 $}
		\includegraphics[scale=0.2]{figures/cnn/winetower.eps}}
	\hspace{1em} \circ\hspace{2em}
	\raisebox{-4.4em}{\hexagon{$ g_1s $}{$ t $}{$e $}{$ s $}{$ g_2t $}{$ h'_2 $}{$ h_1 $}{$e  $}{\psfrag{9}{\hspace{-2em}$ g_1 st $}\psfrag{0}{\hspace{-3em}$ sh'_2g_2t $}}}
	\hspace{1em}	\circ \;
	\raisebox{-3em}{
		\psfrag{1}{\reflectbox{$ g_1s $}}
		\psfrag{2}{$ t $}
		\psfrag{3}{$g_1 sht $}
		\psfrag{4}{$ h $}
		\psfrag{5}{\reflectbox{$ g_1 $}}
		\psfrag{6}{$ sht $}
		\psfrag{7}{$ e $}
		\includegraphics[scale=0.2]{figures/cnn/winetower.eps}}
	\hspace{1.5em}.
\end{align}
Again, by Proposition \ref{resid}, the composition of the bottom box of the first term and the top of the second term in expression \ref{step3} is $ \t {id}_{X_{shth_3 g_3}} $ if $ h h''_2 = h'_2 $ and zero otherwise.
\comments{
the third term in the above expression turns out to be a unit scalar times 
$ \ous
{\displaystyle \smbox{h_1}{g_1st}{sh'_2 g_2 t}{e}{4}{2}{-0.3}}
{\displaystyle \tridown{g_1s}{t}{}{3}{3}{-0.5}}
{\displaystyle \triup{g_1s}{t}{g_1s ht}{3}{3}{1}}
\t{ which is zero by Proposition \ref{resid} unless } h = e $.
So, we will assume $ h=e $.
Treating the second term in the same way, we will get that it is zero unless $ h'_2 = h''_2 $.
}

We now consider the case $ h=e $ and $ h'_2 = h''_2$ ($ = h_2$ say).
The above discussion implies that in this case, $ \left[a(h_2 , g_2 , t , h_3 , g_3) \; \circ \; a(h_1 , g_1 , s , h_2 , g_2)\right] $ is indeed a scalar multiple of $ a(h_1 , g_1 , st , h_3 , g_3) $.
So, we need to gather the $ 3 $-cocycle arising at various steps.
To obtain step \ref{step1}, Lemma \ref{2tri} will give the following six scalars
\[
\left[ \omega (s,t,h_3) \omega (s,th_3 , g_3)\right]\;
\left[\cancel{\ol \omega (sh_2,g_2,e)} \ol \omega (sh_2,g_2,t) \right]\;
\left[ \cancel{\omega (g_1 , s ,e)} \omega (g_1 , s ,t) \right].
\]
Application of Lemma \ref{tribox} (ii) (resp., (i)) while obtaining step \ref{step2} (resp., \ref{step3}) from step \ref{step1} (resp., \ref{step2}), yield
\[
\left[ \omega(h_1 , g_1 s, t) \cancel{\ol \omega (sh_2 g_2 ,e ,e)} \cancel{\ol \omega (h_1, g_1 s ,e)} \right] \;
\t{ (resp., } \left[ \cancel{\omega (s,th_3g_3,e)} \ol \omega (s, h_2, g_2 t) \cancel{\omega (s, e, h_2)} \right]
\t{ ).}
\]
\comments{
Applying Lemma \ref{tribox} (i) (resp., (ii)) and isometry property of upward pointing triangles, the second (resp., third) term of step \ref{step1} becomes 
\[
\left[ \cancel{\omega (s,th_3g_3,e)} \ol \omega (s, h_2, g_2 t) \cancel{\omega (s, e, h_2)} \right] \; \smbox{e}{sh_2g_2t}{sth_3g_3}{e}{4}{2}{-0.3}
\]
\[
\t{(resp., } \left[ \omega(h_1 , g_1 s, t) \cancel{\ol \omega (sh_2 g_2 ,e ,e)} \cancel{\ol \omega (h_1, g_1 s ,e)} \right]  \; \smbox{h_1}{g_1 s t}{sh_2g_2t}{e}{4}{2}{-0.3} \t{ ).}
\]
The product of the above two boxes is $\; \smbox{h_1}{g_1 s t}{sth_3g_3}{e}{4}{2}{-0.3} $.
}
Thus, we obtained the equation
\begin{align*}
& \; a(h_2 , g_2 , t , h_3 , g_3) \; \circ \; a(h_1 , g_1 , s , h_2 , g_2)\\
= & \; \left[ \omega (s,t,h_3) \omega (s,th_3 , g_3) \ol \omega (sh_2,g_2,t) \omega (g_1 , s ,t) \omega(h_1 , g_1 s, t)  \ol \omega (s, h_2, g_2 t)   \right]
a(h_1 , g_1 , st , h_3 , g_3).
\end{align*}
We will be done with the proof once we match the scalars.
Applying the $ 3 $-cocycle relation \ref{3coc} on first and second, third and sixth, fourth and fifth terms separately, we get
\begin{align*}
&\left[ \omega (st,h_3,g_3) \omega (s,t,h_3g_3) \ol \omega (t,h_3,g_3) \right] \; 
\left[ \ol \omega (s, h_2, g_2) \ol \omega (s, h_2 g_2, t) \ol \omega (h_2, g_2, t) \right]\\
&\left[ \omega (h_1g_1 , s, t) \omega (h_1 , g_1 , s t) \ol  \omega (h_1 , g_1 , s)  \right]
\end{align*}
\end{proof}
\noindent \textbf{Notation.} We see that $ \left[ \omega (h_1 , g_1 , s) \omega (s, h_2, g_2) \; a(h_1 , g_1 , s , h_2 , g_2)\right] $ is better behaved with respect to multiplication than $ a(h_1 , g_1 , s , h_2 , g_2)$.
So, we set\\
$ A(h_1 , g_1 , s , h_2 , g_2) := \left[ \omega (h_1 , g_1 , s) \omega (s, h_2, g_2) \; a(h_1 , g_1 , s , h_2 , g_2)\right] $ and the above proposition translates as:\\
$A(h''_2 , g_2 , t , h_3 , g_3) \; \circ \; A(h_1 , g_1 , s , h'_2 , g_2) \; =  \; \delta_{h'_2 = h''_2} \; \left[ \omega (s,t,h_3g_3) \ol \omega (s, h'_2 g_2, t)   \omega (h_1g_1 , s, t)    \right] \; A(h_1 , g_1 , st , h_3 , g_3)$.
\vskip 2 em
Next we will compute the canonical trace $ \Omega $ on $ \mcal A_{g,g} $ for $ g\in G $.
For this, we need orthonormal basis of $ \mcal C_{NN}  ( {}_N L^2 (N) {}_N , X_s) $ for $ s \in G $ with respect to the inner product given by
\[
 \mcal C_{NN}  ( {}_N L^2 (N) {}_N , X_s)  \times  \mcal C_{NN}  ( {}_N L^2 (N) {}_N , X_s)  \ni (c,d) \longmapsto d^* \circ c \in \C.
\]
By Proposition \ref{smbox}, $ \mcal C_{NN}  ( {}_N L^2 (N) {}_N , X_s) $ is zero unless $ s \in H $.
Now, $ X_e = X_h $ for all $ h\in H $.
Since $ N \subset Q $ is irreducible, the space $  \mcal C_{NN}  ( {}_N L^2 (N) {}_N , X_h)  $  is one-dimensional and spanned by the element the inclusion map $ \hat 1 \os {\displaystyle {\iota_h}} \longmapsto [1]_h $.
$ \iota^*_h $ is simply the conditional expectation $ E_N $.

The definition of $ \Omega $ then turns out to be (following \cite{GJ})
\[
\mcal A_{g,g} \ni \psi^s_{g,g} (c ) \os {\displaystyle \Omega} \longmapsto \us {s\in H} \sum \; R^*_g \circ \left(\t{id}_{X_{g^{-1}}} \us N \otimes \; \iota^*_s \; \us N \otimes \t{id}_{X_g}\right) \circ \left(\t{id}_{X_{g^{-1}}} \us N \otimes c\right) \circ \left(\t{id}_{X_{g^{-1}} \us N \otimes X_g} \; \us N \otimes  \iota_s\right) \circ R_g \; \in \C.
\]
\begin{prop}
$ \Omega \left(A(h_1, g,  s , h_2 , g)\right)  = \delta_{h_1 = h_2} \; \delta_{s=e} $.
\end{prop}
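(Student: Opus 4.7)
The plan is to compute $\Omega(A(h_1,g,s,h_2,g))$ directly from the defining formula, splitting cases by whether $s \in H$. The first observation is that $\mcal C_{NN}({}_N L^2(N){}_N, X_s)$ vanishes for $s \notin H$ (by Proposition~\ref{smbox} with $g_1 = e$, $g_2 = s$), so $\iota_s = 0$ and hence $\Omega(A(h_1,g,s,h_2,g)) = 0$ in that case, matching $\delta_{s=e}\delta_{h_1=h_2}$. Since moreover $\omega(h_1,g,s) = 1 = \omega(s,h_2,g)$ when $s=e$ by normalisation of $\omega$, forcing $A = a$ on that diagonal, it suffices to prove $\Omega(a(h_1,g,s,h_2,g)) = \delta_{s=e}\delta_{h_1=h_2}$ under the assumption $s \in H$. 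For such $s$, the automorphism $\alpha_s$ fixes $N$ pointwise, $X_s$ equals $X_e = {}_N L^2(Q){}_N$ as bimodules, $\iota_s$ is the inclusion $\hat 1 \mapsto [1]_s$, and $\iota_s^*$ equals the conditional expectation $E_N$.

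Next, I would unpack $a(h_1,g,s,h_2,g)$ as the composition of tower, disc and wine specified in Remark~\ref{hexagon}, and apply it to $[x]_g \us N \otimes [1]_s$. Direct substitution yields
\[
|H|^{-1}\us i\sum [\alpha_s(b_i)]_s \us N \otimes \left[\alpha_{h_2^{-1}}\left(b_i^*\,\alpha_{s^{-1}}(W)\right)\right]_g,
\]
where $W := \alpha_{h_1}(x\,u(g,s))\,u(h_1,gs)\,u^*(sh_2,g)$, after absorbing $u^*(s,h_2) = 1$ via \ref{ucond}. Applying $\iota_s^* \us N \otimes \t{id}_{X_g} = E_N \us N \otimes \t{id}_{X_g}$, invoking $E_N\circ \alpha_s = E_N$ (since $\alpha_s|_N = \t{id}$ forces both sides to be the unique trace-preserving conditional expectation onto $N$), and using the Pimsner-Popa identity $\sum_i E_N(b_i)\,b_i^* = 1$ together with the commutation of $E_N(b_i) \in N$ with $\alpha_{h_2^{-1}}$, collapses the sum to the intermediate endomorphism $c'([x]_g) = |H|^{-1}[\alpha_{h_2^{-1}}(\alpha_{s^{-1}}(W))]_g$ of $X_g$.

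The final step is to close the $X_g$-strand by composing with $R_g$ and $R_g^*$ from Remark~\ref{duality}. The constraint $h_1 g = s h_2 g s^{-1}$ (inherent in the nonvanishing of the tube morphism) rewrites in $G$ as $h_2^{-1} s^{-1} h_1 = g s^{-1} g^{-1}$, so the $x$-dependent part of $c'$ is $\alpha_{gs^{-1}g^{-1}}(x)$ up to fixed inner adjustments built from $u$-values. When $s = e$ the constraint forces $h_1 = h_2$, all $u$-factors in $W$ trivialise by \ref{ucond}, and $c' = |H|^{-1}\,\t{id}_{X_g}$; the categorical trace evaluates to $|H|^{-1}\cdot\dim(X_g) = |H|^{-1}\cdot|H| = 1$. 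When $s \in H \setminus \{e\}$ the element $gs^{-1}g^{-1}$ is nontrivial in $G\hookrightarrow\t{Out}(Q)$, so $\alpha_{gs^{-1}g^{-1}}$ is outer; substituting $c'$ into the trace formula, expanding $R_g(\hat 1) = \sum_j [u^*(g^{-1},g)\alpha_{g^{-1}}(b_j)]_{g^{-1}} \us N \otimes [b_j^*]_g$, and performing a second Pimsner-Popa manipulation reduces the expression to the categorical trace of $\alpha_{s^{-1}}$ as an element of $\t{End}_{N-N}({}_N L^2(Q){}_N) \cong \C H$, which vanishes since outer automorphisms have zero trace in this finite-dimensional algebra.

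The main obstacle will be the cocycle bookkeeping in the closing step: the fixed unitary adjustments arising from $u(h_1,g)$, $u(h_1 g, s)$ and $u(sh_2,g)$ in $W$ must be transported through $R_g$ and $R_g^*$, and one needs the cocycle identity \ref{uomega} together with the normalisations \ref{GL} to verify that these $u$- and $\omega$-factors combine cleanly, so that the vanishing in the $s \neq e$ case is controlled purely by the outer class of $\alpha_{gs^{-1}g^{-1}}$.
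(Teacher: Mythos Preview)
Your approach is correct and essentially the same as the paper's: both reduce immediately to $s\in H$, expand the hexagon explicitly, and land on the standard identity $\sum_j \alpha_\theta(b_j)\,b_j^* = \delta_{\theta=e}\,|H|$ for the fixed-point subfactor $N=Q^H\subset Q$. The only organisational difference is that you first collapse the wine-map basis sum via Pimsner--Popa to obtain the intermediate endomorphism $c'\in\t{End}(X_g)$ and then close the $X_g$-strand with a second basis sum, whereas the paper carries out the whole computation in one sweep (applying $R_g$, the hexagon, $\iota^*_s$ together) so that a single sum $\sum_j \alpha_{h_2h_1^{-1}s}(b_j)\,b_j^*$ appears directly.

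Two small points. First, your final appeal to ``the categorical trace of $\alpha_{s^{-1}}$ as an element of $\t{End}_{N\text{-}N}({}_N L^2(Q){}_N)\cong\C H$'' is slightly off: the endomorphism $c'$ lives in $\t{End}(X_g)$ and corresponds to the element $gs^{-1}g^{-1}=h_2^{-1}s^{-1}h_1$ of $H\cap gHg^{-1}$, not to $s^{-1}$ in $\C H$. The conclusion is unaffected since this element is nontrivial precisely when $s\neq e$. Second, the ``main obstacle'' you anticipate (cocycle bookkeeping of the $u$-factors through $R_g$, $R_g^*$) is in fact a non-issue: for $s=e$ all $u$-factors in $W$ trivialise by \ref{ucond}, and for $s\neq e$ the sum $\sum_j b_j\,\alpha_\theta(b_j^*)$ vanishes regardless of the fixed unitary $V$ sitting beside it, so no $\omega$-manipulation is required.
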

\begin{proof}
For $ h\in H $, we need to compute the scalar
\begin{align*}
& \; R^*_g \circ \left(\t{id}_{X_{g^{-1}}} \us N \otimes \; \iota^*_h \; \us N \otimes \t{id}_{X_g}\right) \circ \left(\t{id}_{X_{g^{-1}}} \us N \otimes \hspace{1em}
\raisebox{-4.4em}{\hexagon{$ g $}{$ h $}{$e $}{$ h $}{$ g $}{$ h_2 $}{$ h_1 $}{$ e $}{\psfrag{9}{$ g h $}\psfrag{0}{$ hh_2g $}}}
\; \right) \circ \left(\t{id}_{X_{g^{-1}} \us N \otimes X_g} \; \us N \otimes  \iota_h\right) \circ R_g \; (\hat 1)
\end{align*}
\begin{align*}
= & \; \us i \sum R^*_g \circ \left(\t{id}_{X_{g^{-1}}} \us N \otimes \; \iota^*_h \; \us N \otimes \t{id}_{X_g}\right) \circ \left(\t{id}_{X_{g^{-1}}} \us N \otimes
 \hspace{1em}
 \raisebox{-4.4em}{\hexagon{$ g $}{$ h $}{$e $}{$ h $}{$ g $}{$ h_2 $}{$ h_1 $}{$ e $}{\psfrag{9}{$ g h $}\psfrag{0}{$ hh_2g $}}}
\;\right) \left[ [u^* (g^{-1} , g) \alpha_{g^{-1}} (b_i)]_{g^{-1}} \us N \otimes [b^*_i]_g \us N \otimes [1]_h \right]\\
= & \; \abs {H}^{-1} \us {i,j} \sum R^*_g \circ \left(\t{id}_{X_{g^{-1}}} \us N \otimes \; \iota^*_h \; \us N \otimes \t{id}_{X_g}\right)\\
& \;\;\;\;\;\;\;\;\;\;\;\;\;\;\;\;  \left[ [u^* (g^{-1} , g) \alpha_{g^{-1}} (b_i)]_{g^{-1}} \us N \otimes [\alpha_{h_1} \left(b^*_i u(g,h) \right) \; u(h_1 , gh) \; u^*(h h_2 ,g) \; \alpha_h(b_j) ]_h \us N \otimes [\alpha_{h^{-1}_2} (b^*_j)]_g \right]\\
= & \; \abs {H}^{-1} \us {j} \sum R^*_g \left[ \left[u^* (g^{-1} , g) \alpha_{g^{-1}} \left( u(g,h) \; \alpha_{h^{-1}_1} \left(  u(h_1 , gh) \; u^*(h h_2 ,g) \alpha_{h} (b_j) \right) \right) \right]_{g^{-1}} \us N \otimes [\alpha_{h^{-1}_2} (b^*_j)]_g \right]\\
= & \; \abs {H}^{-1} \us {j} \sum u^* (g^{-1} , g) \alpha_{g^{-1}} \left( u(g,h) \; \alpha_{h^{-1}_1} \left(  u(h_1 , gh) \; u^*(h h_2 ,g) \alpha_{h} (b_j) \right) \alpha_{h^{-1}_2} (b^*_j) \right) u (g^{-1} , g)\\
= & \; \abs {H}^{-1} \us {j} \sum u^* (g^{-1} , g) \alpha_{g^{-1}} \left( u(g,h) \; \alpha_{h^{-1}_1} \left(  u(h_1 , gh) \; u^*(h h_2 ,g)  \right)  \right)\; \alpha_{g^{-1}} \left( \alpha_{h^{-1}_1 h } (b_j) \alpha_{h^{-1}_2} (b^*_j) \right) u (g^{-1} , g).
\end{align*}
Pulling the sum over the last term, we get $ \alpha_{g^{-1}} \left( \alpha_{h^{-1}_2} \left( \us {j} \sum \alpha_{h_2 h^{-1}_1 h} (b_j) b^*_j \right) \right) = \delta_{h=h_1 h^{-1}_2} \abs H$ (which is a standard fact in fixed-point subfactor of an outer action of finite group).
Let us assume $ h=h_1 h^{-1}_2 $.
But then, $ h_1 g h = h h_2 g$ will imply $ h_1 = h_2 $ and thereby $ h=e $.

Under the assumption $ h=e $ and $ h_1 = h_2 $, in the above expression, the term in between $ u^* (g^{-1} , g) $ and $ u (g^{-1} , g) $, becomes $ 1 $.
This gives the required result.
\end{proof}
\begin{cor}
The set $ \left\{ A(h_1, g_1, s, h_2,g_2) :h_1,h_2 \in H, s\in G \t{ such that } h_1g_1s =sh_2g_2 \right\} $ is a basis for $ \mcal A_{g_1,g_2} $.
\end{cor}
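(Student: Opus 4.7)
The plan is to leverage the trace $\Omega$ together with the multiplication formula in Proposition \ref{mult} to test linear independence via a non-degenerate pairing. The spanning statement is already in hand: the earlier proposition produces the elements $a(h_1, g_1, s, h_2, g_2)$ as a generating family for $\mcal A_{g_1,g_2}$, and since $A(h_1,g_1,s,h_2,g_2)$ differs from $a(h_1,g_1,s,h_2,g_2)$ by a unit scalar, the $A$-family spans as well. So the entire content of the Corollary is linear independence, subject to the defining constraint $h_1 g_1 s = s h_2 g_2$.

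Fix an arbitrary potential relation $\sum c_{h_1',s',h_2'}\, A(h_1',g_1,s',h_2',g_2) = 0$ in $\mcal A_{g_1,g_2}$, where the sum is over triples satisfying $h_1' g_1 s' = s' h_2' g_2$. Pick any one triple $(h_1, s, h_2)$ appearing in the sum; we shall show $c_{h_1,s,h_2}=0$. From $h_1 g_1 s = s h_2 g_2$ one reads off $h_2 g_2 s^{-1} = s^{-1} h_1 g_1$, so the element $A(h_2, g_2, s^{-1}, h_1, g_1) \in \mcal A_{g_2,g_1}$ is well-defined. Compose the relation on the left with this element.

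By Proposition \ref{mult}, each term transforms as
\begin{align*}
A(h_2, g_2, s^{-1}, h_1, g_1) \circ A(h_1', g_1, s', h_2', g_2) = \delta_{h_2'=h_2}\, \mu(h_1',s')\, A(h_1', g_1, s's^{-1}, h_1, g_1),
\end{align*}
for some unimodular scalar $\mu(h_1',s')$ (the explicit $3$-cocycle factor from the proposition). Now apply the trace $\Omega$, which by the preceding proposition vanishes on $A(h_1',g_1,s's^{-1},h_1,g_1)$ unless simultaneously $h_1' = h_1$ and $s' s^{-1} = e$, i.e.\ $s' = s$. The non-trivial constraint $h_1' g_1 s' = s' h_2' g_2$ together with $s'=s$ and $h_2' = h_2$ then forces $h_1' = h_1$. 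Thus all but a single term are killed, and we are left with $c_{h_1,s,h_2}\cdot \mu(h_1,s) = 0$, giving $c_{h_1,s,h_2}=0$.

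The main mechanism here is simply that the pairing $(X,Y)\mapsto \Omega(Y\circ X)$ on $\mcal A_{g_1,g_2}\times\mcal A_{g_2,g_1}$ is non-degenerate on the $A$-family, with the ``dual'' of $A(h_1,g_1,s,h_2,g_2)$ being (a scalar multiple of) $A(h_2,g_2,s^{-1},h_1,g_1)$. No genuine obstacle arises; the only mild bookkeeping point is that one has to verify that the element $A(h_2,g_2,s^{-1},h_1,g_1)$ actually lives in $\mcal A_{g_2,g_1}$, which is immediate from rearranging the defining constraint of $(h_1,s,h_2)$.
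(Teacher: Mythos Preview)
Your argument is correct and is in the same spirit as the paper's proof, which simply invokes the non-degeneracy of $\Omega$ (deduced from its positivity, proved in \cite{GJ}). The difference is that you make the non-degeneracy explicit and elementary: rather than appealing to positivity of the sesquilinear form $\Omega(y^{\#} x)$, you exhibit for each basis candidate $A(h_1,g_1,s,h_2,g_2)$ a dual element $A(h_2,g_2,s^{-1},h_1,g_1)$ and verify directly, using only the multiplication formula and the computation $\Omega(A(h_1,g,s,h_2,g))=\delta_{h_1=h_2}\delta_{s=e}$, that the bilinear pairing $\Omega(Y\circ X)$ is diagonal with unimodular entries on this family. This has the minor advantage of not relying on the $\#$-structure or on the external positivity result; one small redundancy is your remark that the constraint forces $h_1'=h_1$ once $s'=s$ and $h_2'=h_2$, since $h_1'=h_1$ is already delivered by the $\Omega$-condition.
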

\begin{proof}
This easily follows from that $ \Omega  $ is non-degenerate on $ \mcal A $ (which is a consequence of $ \Omega $ being positive (see \cite{GJ})).
\end{proof}
\vskip 2em
We will now describe the $ * $-structure on $ \mcal A $ which we denote by $ \# $.
From \cite{GJ}, the definition of $ (\psi^s_{g_1 ,g_2} (c))^\# $ is the following:
\[
\psi^{s^{-1}}_{g_2 ,g_1} \left( \left( \t{id}_{X_{s^{-1}}} \us N \otimes \t{id}_{X_{g_1}} \us N \otimes \ol R^*_s \right) \circ \left(\t{id}_{X_{s^{-1}}} \us N \otimes c^*  \us N \otimes   \t{id}_{X_{s^{-1}}}\right)  \circ \left( R_s  \us N \otimes \t{id}_{X_{g_2}} \us N \otimes \t{id}_{X_{s^{-1}}} \right)\right) \in \mcal A_{g_2,g_1}.
\]
\begin{prop}\label{hash}
$ \left(A(h_1,g_1,s,h_2,g_2) \right)^\#$\\
\[
= \; \ol \omega (h_1 g_1 , s , s^{-1}) \; \omega (s , h_2 g_2 , s^{-1}) \; \ol \omega (s , s^{-1} ,h_1 g_1) \; A(h_2, g_2, s^{-1}, h_1,g_1).
\]
\end{prop}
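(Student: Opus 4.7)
The plan is to mimic the derivation of the $\#$-formula for the diagonal subfactor in Section \ref{diag}, adapted to the richer hexagonal representatives in the Bisch-Haagerup setting. First, using Remark \ref{hexagon}, I would fix the representative of $a(h_1, g_1, s, h_2, g_2)$ as the composition $\text{wine}_{s, g_2} \circ \text{disc}(h_1, g_1 s, s h_2 g_2, e) \circ \text{tower}_{g_1, s}$ with $h_3 = h_4 = e$. Applying the defining formula for $\#$ and inserting the standard solutions $(R_s, \bar R_s)$ from Remark \ref{duality}, we obtain an element of $\mcal C_{NN}(X_{g_2} \us N \otimes X_{s^{-1}}, X_{s^{-1}} \us N \otimes X_{g_1})$.

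Next, I would evaluate this morphism on a generic vector $[x]_{g_2} \us N \otimes [1]_{s^{-1}}$, in the spirit of the computation of $a(g_1, s, g_2)^\#$ in Section \ref{diag}. Three effects enter: (i) taking the adjoint of the central disc via Proposition \ref{smbox*alg}(ii) introduces a cocycle scalar and reverses the labels $(h_1, h_2) \mapsto (h_1^{-1}, h_2^{-1})$; (ii) the tower/wine pair surrounding the disc collapses after applying $R_s^*$ (which invokes the conditional expectation $E_N$ and introduces $u^*(s^{-1}, s)$); (iii) the scalar $\omega(s, s^{-1}, s)$ appears from the definition of $\bar R_s$. Writing everything out and repeatedly pushing unitaries through using Equation \ref{uomega}, I would arrive at a morphism whose underlying unitary is that of $a(h_2, g_2, s^{-1}, h_1, g_1)$, multiplied by an explicit product of $\omega$-values.

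The last step is the scalar bookkeeping. The passage from $a$ to $A$ absorbs the factors $\omega(h_1, g_1, s)\,\omega(s, h_2, g_2)$ on the left side and $\omega(h_2, g_2, s^{-1})\,\omega(s^{-1}, h_1, g_1)$ on the right, so that the net remaining scalar should collapse to the structurally clean expression $\ol\omega(h_1 g_1, s, s^{-1})\,\omega(s, h_2 g_2, s^{-1})\,\ol\omega(s, s^{-1}, h_1 g_1)$. A useful sanity check is that this is \emph{precisely} the diagonal-subfactor formula with $g_1, g_2$ replaced by $h_1 g_1, h_2 g_2$, which is what one would expect since, after conjugating through the disc, the surrounding tower/duality/wine piece is exactly the diagonal situation evaluated at the source/target of the disc.

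The main obstacle is the scalar matching: the computation produces on the order of a dozen raw $\omega$-terms, and reassembling them into the target form requires systematic use of \ref{3coc}, organized in the same spirit as (but much shorter than) the proof of Proposition \ref{go}. This part should be routine but tedious; once the structural diagonal-case observation above is invoked, only a finite list of identities of the type $\omega(g_1 l, l^{-1}, \cdot) \omega(g_1, l, \cdot) = \omega(g_1, \cdot, \cdot) \cdot (\text{something})$ (combined with the normalizations in \ref{GL}) remain to be checked.
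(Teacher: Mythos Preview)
Your approach is correct in outline and would succeed, but the paper takes a notably different route that sidesteps precisely the ``main obstacle'' you identify, namely the scalar bookkeeping.

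The paper's proof proceeds in two stages. First, it \emph{guesses} the answer and verifies, using only the already-established multiplication formula (Proposition \ref{mult}) and trace formula, that the basis elements $A(h_1,g_1,s,h_2,g_2)$ are orthonormal with respect to the sesquilinear form $\langle a,b\rangle' := \Omega(A'(b)\cdot a)$, where $A'$ denotes the putative $\#$. Since the genuine inner product $\Omega(b^\# a)$ is positive definite, once one knows $(A(\ldots))^\# \sim A'(\ldots)$ (proportionality with \emph{some} nonzero scalar), the scalar is forced to be $1$. Second, for proportionality the paper does perform a direct computation on $[x]_{g_2}\us N\otimes [y]_{s^{-1}}$ much as you propose, but it stops as soon as both sides are reduced to products of $u$'s: rather than simplifying those products via Equation \ref{uomega}, it simply observes that conjugation by each product implements the \emph{same} automorphism $\alpha_{h_2}\alpha_{g_2}\alpha_{s^{-1}}\alpha_{g_1}^{-1}\alpha_{h_1}^{-1}\alpha_{s^{-1}}^{-1}$ of the factor $Q$, so the two unitaries differ by a scalar. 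This bypasses the dozen-term $\omega$-reduction entirely.

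Your structural observation---that after the disc is absorbed the tower/duality/wine remnant is exactly the diagonal-subfactor computation with $g_i$ replaced by $h_ig_i$---is correct and is morally why the final scalar has that form; but the paper never makes this reduction explicit, relying instead on the positivity-plus-automorphism trick above. Your method would give an independent derivation of the scalar (and hence an internal consistency check), at the cost of the tedious cocycle manipulation you anticipate.
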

\begin{proof}
Set $ A'(h_1,g_1,s,h_2,g_2) :=  \ol \omega (h_1 g_1 , s , s^{-1}) \; \omega (s , h_2 g_2 , s^{-1}) \; \ol \omega (s , s^{-1} ,h_1 g_1) \; A(h_2, g_2, s^{-1}, h_1,g_1)$.
Now, we get an inner product $ \lab \cdot , \cdot \rab' $ defined as
\[
\left \lab \; A(h_1,g_1,s,h_2,g_2) \; , \; A(h_3,g_3,t,h_4,g_4)\;  \right \rab' := \; \Omega \left( \; A'(h_3,g_3,t,h_4,g_4) \; A(h_1,g_1,s,h_2,g_2) \right)
\]
and extended linearly in the first and conjugate-linearly in the second variable.
In fact, the basis elements are orthonormal with respect to $ \lab \cdot , \cdot \rab' $.
Since $ \Omega \circ \# = \ol \Omega $ (by positivity of $ \Omega  $ (\cite{GJ})), it will be enough to prove $ \left(A(h_1,g_1,s,h_2,g_2) \right)^\# \sim  A'(h_1,g_1,s,h_2,g_2) $.
This is equivalent to proving $ \left(a(h_1,g_1,s,h_2,g_2) \right)^\# \sim a(h_2, g_2, s^{-1}, h_1,g_1) $.
This will follow from
\[
\us {\displaystyle A} {\underbracket
{ \t{id}_{X_{s^{-1}}} \us N \otimes \t{id}_{X_{g_1}} \us N \otimes \ol R^*_s }}
\;\; \circ \;\;
\us {\displaystyle B} {\underbracket
{\t{id}_{X_{s^{-1}}} \us N \otimes \hspace{1em}
\raisebox{-4.4em}{\hexagon{$ s $}{$ g_2 $}{$h_2 $}{$ g_1 $}{$ s $}{$ e $}{$ h^{-1}_1 $}{$ e $}{\psfrag{9}{$ sh_2 g_2 $}\psfrag{0}{$ g_1s $}}}
\hspace{1em}\us N \otimes   \t{id}_{X_{s^{-1}}}}}
\;\; \circ \;\;
\us {\displaystyle C} {\underbracket
{R_s  \us N \otimes \t{id}_{X_{g_2}} \us N \otimes \t{id}_{X_{s^{-1}}} }}
\hspace{1em} \sim \hspace{1em}
\raisebox{-4.4em}{\hexagon{$ g_2 $}{$ s^{-1} $}{$ e $}{$ s^{-1} $}{$ g_1 $}{$ h_1 $}{$ h_2 $}{$ e $}{\psfrag{9}{$ g_2 s^{-1} $}\psfrag{0}{$ s^{-1} h_1 g_1 $}}}
\]
The right side acting on $ [x]_{g_2} \us N \otimes [y]_{s^{-1}} $ gives (up to a nonzero scalar)
\begin{equation}\label{hash1}
\us i \sum
\left[
\alpha_{h_2} \left( x \alpha_{g_2} (y) \; u (g_2 ,s^{-1}) \right)\;
u(h_2,g_2 s^{-1}) u^* (s^{-1} h_1 ,g_1) u^* (s^{-1},  h_1) \alpha_{s^{-1}} (b_i)
\right]_{s^{-1}} \us N \otimes [\alpha_{h^{-1}_1} (b^*_i)]_{g_1}
\end{equation}
Next we compute the left side acting on $ [x]_{g_2} \us N \otimes [y]_{s^{-1}} $ (up to a nonzero scalar) in the following way
\begin{align*}
\os {\displaystyle C} \longmapsto & \us i \sum [u^* (s^{-1} , s) \alpha_{s^{-1}} (b_i) ]_{s^{-1}} \us N \otimes [ b^*_i]_s \us N \otimes [x]_{g_2} \us N \otimes [y]_{s^{-1}}
\\
\os {\displaystyle B} \longmapsto & \us {i,j} \sum [u^* (s^{-1} , s) \alpha_{s^{-1}} (b_i) ]_{s^{-1}}\\
& \us N \otimes \left[\alpha_{h^{-1}_1} \left( b^*_i \alpha_s (\alpha_{h_2} (x)) \; u(s,h_2) u(sh_2 , g_2) \; \right) u(h^{-1}_1 , sh_2 g_2) u^* (g_1 ,s ) \alpha_{g_1} (b_j) \right]_{g_1} \us N \otimes [b^*_j]_s \us N \otimes [y]_{s^{-1}}
\\
\os {\displaystyle A} \longmapsto & \us {i,j} \sum [u^* (s^{-1} , s) \alpha_{s^{-1}} (b_i) ]_{s^{-1}} \us N \otimes\\
& \left[\alpha_{h^{-1}_1} \left( b^*_i \alpha_s (\alpha_{h_2} (x)) \; u(s,h_2) u(sh_2 , g_2) \; \right) u(h^{-1}_1 , s h_2 g_2) u^* (g_1 ,s ) \alpha_{g_1} (b_j) \right]_{g_1}
E_N \left( b^*_j \alpha_s (y) u(s,s^{-1}) \right)
\\
\sim \;\; &  \us {i} \sum [u^* (s^{-1} , s) \alpha_{s^{-1}} (b_i) ]_{s^{-1}}\\
& \us N \otimes \left[\alpha_{h^{-1}_1} \left( b^*_i \alpha_s (\alpha_{h_2} (x)) \; u(s,h_2) u(sh_2 , g_2) \; \right) u(h^{-1}_1, sh_2 g_2) u^* (g_1 ,s ) \alpha_{g_1} \left(\alpha_s (y) u(s,s^{-1}) \right) \right]_{g_1}\\
= \;\; &  \us {i,k} \sum [u^* (s^{-1} , s) \alpha_{s^{-1}} (b_i) ]_{s^{-1}} \us N \otimes \\
& \! \! \! \! \! \! \! \left[ E_N \left(b^*_i \alpha_s (\alpha_{h_2} (x)) \; u(s,h_2) u(sh_2 , g_2) \alpha_{h_1} \left(   u(h^{-1}_1, sh_2 g_2) u^* (g_1 ,s ) \alpha_{g_1} \left(\alpha_s (y) u(s,s^{-1}) \right) \right) b_k \right) \alpha_{h^{-1}_1} (b^*_k) \right]_{g_1}\\
=  \;\; &  \us {k} \sum \left[u^* (s^{-1} , s) \alpha_{s^{-1}} \left(\alpha_s (\alpha_{h_2} (x)) \; u(s,h_2) u(sh_2 , g_2) \alpha_{h_1} \left(   u(h^{-1}_1, sh_2 g_2) u^* (g_1 ,s ) \alpha_{g_1} \left(\alpha_s (y) u(s,s^{-1}) \right) \right) b_k \right) \right]_{s^{-1}}\\
& \; \us N \otimes \left[ \alpha_{h^{-1}_1} (b^*_k) \right]_{g_1}\\
\end{align*}
Since the second tensor component matches with that of the expression in \ref{hash1}, we will now work with the first term.
\begin{align*}
& \; u^* (s^{-1} , s) \alpha_{s^{-1}} \left(\alpha_s (\alpha_{h_2} (x)) \; u(s,h_2) u(sh_2 , g_2) \alpha_{h_1} \left(   u(h^{-1}_1, sh_2 g_2) u^* (g_1 ,s ) \alpha_{g_1} \left(\alpha_s (y) u(s,s^{-1}) \right) \right) b_k \right)\\
= & \; \alpha_{h_2} (x) \; u^* (s^{-1} , s) \; \alpha_{s^{-1}} \left(  u(s,h_2) u(sh_2 , g_2) \alpha_{h_1} \left(   u(h^{-1}_1, sh_2 g_2) u^* (g_1 ,s ) \alpha_{g_1} \left(\alpha_s (y) u(s,s^{-1}) \right) \right)  \right) \alpha_{s^{-1}} (b_k)\\
\end{align*}
In the last expression, we pick $ y $ and using the intertwining relation between $ u $ and $ \alpha $, we push it leftwards all the way to the right side of the term $ \alpha_{h_2} (x) $ and it becomes $ \alpha_{h_2} (\alpha_{g_2} (y)) $.
This matches the first two and the last terms with that of the first tensor component of the expression \ref{hash1}.
We are left with showing the $ u $-terms in the middle, namely
\begin{equation}\label{hash2}
u^* (s^{-1} , s) \; \alpha_{s^{-1}} \left(  u(s,h_2) u(sh_2 , g_2) \alpha_{h_1} \left(   u(h^{-1}_1, sh_2 g_2) u^* (g_1 ,s ) \alpha_{g_1} \left( u(s,s^{-1}) \right) \right)  \right)
\end{equation}
is a nonzero multiple of the $ u $-terms in \ref{hash1}, that is,
\begin{equation}\label{hash3}
\alpha_{h_2} \left(u (g_2 ,s^{-1}) \right)\;
u(h_2,g_2 s^{-1}) u^* (s^{-1} h_1 ,g_1) u^* (s^{-1},  h_1)
\end{equation}
Taking the adjoint of \ref{hash2} and \ref{hash3} separately, we get the same automorphism $ \alpha_{h_2} \alpha_{g_2} \alpha_{s^{-1}} \alpha^{-1}_{g_1} \alpha^{-1}_{h_1} \alpha^{-1}_{s^{-1}}$.
Hence, we are done.
\end{proof}
\vskip 2em
In order to describe the representations of $ \mcal A $, we need a few more notations.
As in Section \ref{diag}, $ \mscr C $ will denote the set of conjugacy classes, $ g_C $ will be a representative of $ C \in \mscr C $  and for $ g\in C $, we pick $ w_g $ such that $ g=w_g g_C w^{-1}_g $.
Also, $ \vphi_C $ will be the $ 2 $-cocycle $ \ol \vphi_{g_C} $ of $ G_C $.
For $ C \in \mscr C $, set $ S_C:= \{ (h,g) \in H\times G : hg \in C\} $.
\begin{thm}\label{bhtubalg}
(i) The affine annular algebra $ \mcal A = (( \mcal A_{g_1 , g_2} ))_{\t {fin. supp.}}$
is isomorphic as a $ * $-algebra to $\us {C \in \mscr C} \bigoplus M_{S_C} \otimes [\C G_C]_{\vphi_C}$ where $ M_{S_C} $ denotes the $ * $-algebra of finitely supported matrices with rows and columns indexed by elements of $ S_C $.\\
(ii) Every Hilbert space representation $ \pi : \mcal A \ra \mcal L (V) $ decomposes uniquely (up to isomorphism) as an orthogonal direct sum of submodules $ V^C := \left\lab \t{Range } \pi \left(a(e,g_C, e , e, g_C)\right) \right\rab$ for $ C \in \mscr C $.
(We will call a representation of $ \mcal A $ `supported on $ C \in \mscr C$' if it is generated by the range of the action of the projection $ a(e,g_C, e , e, g_C) $.)
The category of $ C $-supported representations of $ \mcal C $ is additively equivalent to representation category of $ [\C G_C]_{\vphi_C} $.

\end{thm}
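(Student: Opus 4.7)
The plan is to mirror the proof of Theorem \ref{diagtubalg}, since the multiplication formula (Proposition \ref{mult}) and the $\#$-formula (Proposition \ref{hash}) for the Bisch-Haagerup affine annular algebra involve exactly the same scalar expressions as in the diagonal case, with the single element $g \in C$ replaced throughout by a pair $(h,g) \in S_C$ (note that $h_1g_1s = sh_2g_2$ forces $h_1g_1$ and $h_2g_2$ to be conjugate and to lie in a common $C \in \mscr{C}$). Accordingly, the pairs in $S_C$ will index the matrix units of $M_{S_C}$, and the group-algebra factor will again be captured via the representatives $w_{h_ig_i}$ and the functions $\gamma_{a,x,y}$ produced by Proposition \ref{go}.

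Concretely, I will define
\[
\Phi\bigl(A(h_1, g_1, s, h_2, g_2)\bigr) := \ol{\gamma}_{g_C, w_{h_1g_1}, w_{h_2g_2}}\bigl(w^{-1}_{h_1g_1}\, s\, w_{h_2g_2}\bigr) \; E_{(h_2,g_2),(h_1,g_1)} \otimes \bigl[w^{-1}_{h_2g_2}\, s^{-1}\, w_{h_1g_1}\bigr],
\]
where $C$ is the conjugacy class of $h_1g_1$. The constraint $h_1g_1s = sh_2g_2$ forces $w^{-1}_{h_1g_1} s w_{h_2g_2} \in G_C$, so this is well-defined. To verify that $\Phi$ preserves multiplication, I will compute $\Phi(A(h_2, g_2, t, h_3, g_3)) \cdot \Phi(A(h_1, g_1, s, h_2, g_2))$ using the twisted group-algebra product (which contributes a $\varphi_C$ scalar) and match it, via Proposition \ref{mult}, with the scalar $\omega(s,t,h_3g_3)\, \ol{\omega}(s, h_2g_2, t)\, \omega(h_1g_1, s, t)$ times $\Phi(A(h_1, g_1, st, h_3, g_3))$. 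This reduces to applying Proposition \ref{go} with $a = g_C$, $x = w_{h_1g_1}$, $y = w_{h_2g_2}$, $z = w_{h_3g_3}$ and $g = w^{-1}_{h_1g_1} s w_{h_2g_2}$, $h = w^{-1}_{h_2g_2} t w_{h_3g_3}$. Preservation of $\#$ is handled in the same way using Proposition \ref{hash} together with the normalization $\gamma_{g_C, w, w}(e) = 1$, exactly as in the proof of Theorem \ref{diagtubalg}(i).

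For part (ii), the direct sum decomposition over $\mscr{C}$ of any representation follows at once from the block structure established in part (i). The projection $a(e, g_C, e, e, g_C)$ corresponds under $\Phi$ to the rank-one matrix unit $E_{(e, g_C),(e, g_C)} \otimes [e]$ within the $C$-block, and its image generates the matrix column indexed by $(e, g_C)$. The additive equivalence between $C$-supported representations of $\mcal{A}$ and representations of $[\C G_C]_{\varphi_C}$ is then the standard Morita equivalence between $M_{S_C} \otimes [\C G_C]_{\varphi_C}$ and $[\C G_C]_{\varphi_C}$: a $C$-supported module $W$ yields a $[\C G_C]_{\varphi_C}$-module on the range of $\pi(a(e, g_C, e, e, g_C))$, while any $[\C G_C]_{\varphi_C}$-module $U$ extends to $\ell^2(S_C) \otimes U$ via the evident block action.

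The main obstacle is the multiplication-preservation step, where Proposition \ref{go} must be invoked in full generality; the computation is conceptually identical to the corresponding step in Theorem \ref{diagtubalg} but is notationally heavier because the matrix indices are pairs in $S_C$ rather than single elements of $C$, and one must take care that the various $\omega$-scalars in Proposition \ref{mult} combine correctly with the $\gamma$- and $\varphi_C$-scalars produced by $\Phi$. Once this is verified, the $\#$-compatibility check and the Morita argument for (ii) are essentially formal.
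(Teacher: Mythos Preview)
Your proposal is correct and follows essentially the same route as the paper. For part (i) the paper defines $\Phi$ on the $a$-basis (with the argument of $\gamma$ implicit) and then simply says ``imitate the proof of Proposition \ref{diagtubalg}''; your version on the $A$-basis is equivalent since $A$ and $a$ differ by a unit scalar, and you spell out the reduction to Proposition \ref{go} exactly as the diagonal proof does. For part (ii) the paper verifies orthogonality of $V^{C_1}$ and $V^{C_2}$ and the spanning $V = \bigoplus_C V^C$ by a direct computation with the elements $a(h,g,e,h,g)$ (using the partition of identity $\sum_{h\in H} a(h,g,e,h,g)$ in $\mcal A_{g,g}$), whereas you deduce the decomposition from the block structure in (i); both arguments are valid and yield the same conclusion, with yours being slightly more structural and the paper's slightly more explicit about why the generating ranges behave as claimed.
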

\begin{proof}
(i) Define the map $ \Phi : \mcal A \lra  \us {C \in \mscr C} \bigoplus M_{S_C} \otimes [\C G_C]_{\vphi_C}  $ by
\[
a(h_1,g_1,s,h_2,g_2) \os {\displaystyle \Phi} \longmapsto \ol \gamma_{g_C , w_{h_1 g_1} , w_{h_2 g_2}} E_{(h_2,g_2) , (h_1,g_1)} \otimes [w^{-1}_{h_2 g_2} s^{-1} \; w_{h_1g_1} ]
\]
extended linearly where $ h_1g_1 , h_2 g_2 \in C $.
Using the formula for multiplication and $ \# $ in Propositions \ref{mult} and \ref{hash} and the cocyle relation in Proposition \ref{2coc}, one can imitate the proof of Proposition \ref{diagtubalg} to show that the map $ \Phi $ serves as the required isomorphism.

(ii) Let $\pi: \mcal A \ra \mcal L (V) $ be a Hilbert space representation.
For $ C_1,C_2 \in \mscr C $ such that $ C_1 \neq C_2 $, we need to show $ V^{C_1} $ and $ V^{C_2} $ are orthogonal.
Taking inner product of the generating vectors, we get $ \left\lab \pi (a(e,g_{C_1},s_1,h_1, g)) \xi\; ,\; \pi ( a(e,g_{C_2}, s_2,h_2,g)) \eta \right\rab = \left\lab \pi (a(e,g_{C_2}, s_2,h_2,g)) ^\# \cdot a(e,g_{C_1},s_1,h_1, g)) \xi\; , \; \eta \right\rab$ which is zero unless $ h_1 = h_2 $ but in that case $ C_1 $ and $ C_2 $ have to be the same; so, the inner product is zero.

For the decomposition, it remains to show that $ V \subset \us {C\in \mscr C} \bigoplus V^C $.
Let $ \xi \in V_g $.
Note that the identity $ \us {h \in H} \sum a(h,g,e,h,g) $ of $ \mcal A_{g,g} $ is a sum of orthogonal projections.
So, $ \xi = \us {h \in H} \sum \pi (a(h,g,e,h,g)) \xi  $.
For $ h\in H $, we have $ \pi (a(h,g,e,h,g)) \xi = \pi (a(e,g_{C},e,h,g) \zeta$ where $hg \in C \in \mscr C $ and $\zeta = \pi (a(h,g,e,e,g_C)) \xi \in V^C$.

The proof of equivalence of $ C $-supported representations with representations of $ [\C G]_{\vphi_C} $ is exactly the same as the proof of Theorem \ref{diagtubalg}.
\end{proof}
\begin{rem}
To find the tube algebra $ \mcal T $ of $ \mcal C_{NN} $, we need to first choose a set of representatives in the isomorphism classes of simple objects in $ \mcal C_{NN} $.
By Propositions \ref{smbox} and \ref{smbox*alg}, $ X_{g_1} $ and $ X_{g_2} $ are isomorphic if and only if $ g_1 $ and $ g_2 $ are in the same $ H $-$ H $ double coset where the isomorphism is implemented by \raisebox{-1.5 em}{\1disc{$h_1 $}{$ g_1 $}{$ g_2 $}{$ h_2 $}} for any $ h_1, h_2 \in H $ satisfying $h_1 g_1 = g_2 h_2$.
Now for $ g\in G $, the endomorphism space $ \t {End} (X_g) $ is isomorphic to the group algebra $ H^g := H \cap g^{-1} H g $ twisted by the scalar $ 2 $-cocycle $ H^g \times H^g  \ni (h_1 , h_2) \mapsto \overline{\omega} (g h_1 g^{-1}, g h_2 g^{-1}, g) \; \omega (g h_1 g^{-1} , g , h_2) \; \overline{\omega} (g , h_1, h_2) \in S^1$ via
\[
\C H^g \supset H^g \ni h \longmapsto \hspace{1.5em} \raisebox{-1.5 em}{\1disc{$ g h g^{-1} $}{$ g $}{$ g $}{$ h $}} \in \t {End} (X_g).
\]
For $ g \in G $, fix a maximal set $ \Pi_g $ of mutually orthogonal minimal projections in $ \t {End} (X_g) $.
Let $ H \backslash G / H $ be a set of representatives from all the $ H $-$ H $ double cosets in $ G $.
Then, it follows that
$ \us {g \in  H \backslash G / H }  \bigcup  \left\{  \t{Range} (p) : p \in  \Pi_g \right\}$ is a set of representatives in the isomorphism classes of simple objects in $ \mcal C_{NN} $.
Hence, the tube algebra $ \mcal T $ is isomorphic (as a $ * $-algebra) to
\[
\us {g_1 , g_2 \in H \backslash G / H}  \bigoplus \;\us {p_1 \in \Pi_{g_1}, p_2 \in \Pi_{g_2}}  \bigoplus \left[\psi^{\mathbbm 1}_{g_2,g_2} (p_2) \circ
{\mcal A}_{g_1,g_2} \circ \psi^{\mathbbm 1}_{g_1,g_1} (p_1)\right].
\]
\end{rem}
\begin{rem}
By \cite[Theorem 4.2]{GJ}, we know that the representation categories of the  affine algebra $ \mcal A $ and tube algebra $ \mcal T $ are equivalent although as $ * $-algebras they are non-isomorphic.
There is one thing to notice that this representation category (appearing in Theorem \ref{bhtubalg}) is also equivalent to the category of tube representations of the diagonal subfactor (as in Theorem \ref{diagtubalg}) corresponding to the automorphisms $\alpha_g$, $g\in H\cup K$ of the $ II_1 $-factor $ Q $.
This equivalence can be seen in an alternative way:\\
\indent Let $_A \mcal H_B $ be an extremal bifinite bimodule and $ P $ be its corresponding subfactor planar algebra (namely the `unimodular bimodule planar algebra', in the sense of \cite{pertpa}).
By Theorem 4.2 of \cite{GJ}, the category of (Jones) affine $ P $-modules is equivalent to the representation category of the tube algebra of $ \mcal C_{AA}:= $ the category of bifinite $ A $-$ A $-bimodules generated by $_A \mcal H_B $.
By \cite[Remark 2.16]{DGG1}, the affine module categories corresponding to $ P $ and its dual $ \ol P $ are equivalent.
On the other hand, the dual planar algebra is isomorphic to the subfactor planar algebra associated to the contragradient bimodule $_B \ol {\mcal H}_A $.
Thus the representation category of the tube algebra of $ \mcal C_{AA} $ is equivalent to that of $ \mcal C_{BB}:= $ the category of bifinite $ B $-$ B $-bimodules generated by $_A \mcal H_B $.\\
\indent Next, consider an intermediate extremal finite index subfactor $ N \subset Q \subset M $.
Let $ \Gamma $ denote the bifinite bimodule $_N L^2 (M)_Q  $.
It is easy to check that the category $ \mcal C_{NN} $ of bifinite $ N $-$ N $-bimodules generated by $ \Gamma $ is the same as those which come from the subfactor $ N \subset M $.
Let $ \mcal C_{QQ} $ denote the smallest $ C^* $-tensor category of bifinite $ Q $-$Q $-bimodules coming from the subfactor $ N \subset Q $ as well as $ Q \subset M $.
One can verify that $ \mcal C_{QQ} $ is the same as the category of bifinite $ Q $-$Q $-bimodules generated by $ \Gamma $.
Hence, from the previous paragraph, the category of tube representations of $ \mcal C_{NN} $ is equivalent to that of $ \mcal C_{QQ} $.\\
\indent Coming back to our context of Bisch-Haagerup subfactor $ N = Q^H \subset Q \rtimes K $ as set up in the beginning of this section, it remains to show that $ \mcal C_{QQ} $ is the $ C^* $-tensor category generated by the bimodules $_Q L^2 (Q_{\alpha_g})_Q $ for $ g \in H \cup K $; this is an easy computation.
\end{rem}
\bibliographystyle{alpha}

\end{document}